\providecommand{\N}{}
\renewcommand{\N}{{\mathbb N}}
\renewcommand{\d}[1]{\ensuremath{\operatorname{d}\!{#1}}}
\newcommand{\E}[1]{{\mathbb E}\left[#1\right]}
\newcommand{\p}[1]{{\mathbb P}\left\{#1\right\}}
\newcommand{\I}[1]{{\mathbf 1}_{[#1]}}
\newcommand\cA{\mathcal A}
\newcommand\cB{\mathcal B}
\newcommand\cC{\mathcal C}
\newcommand\cF{\mathcal F}
\newcommand\cG{\mathcal G}
\newcommand\cH{\mathcal H}
\newcommand\cP{\mathcal P}
\newcommand\cR{{\mathcal R}}
\newcommand\cS{{\mathcal S}}
\newcommand\cT{{\mathcal T}}
\newcommand\cV{\mathcal V}
\newcommand\cX{{\mathcal X}}
\newcommand\cY{{\mathcal Y}}
\newcommand{\bA}{\mathbf{A}} 
\newcommand{\bB}{\mathbf{B}} 
\newcommand{\bC}{\mathbf{C}}
\newcommand{\bF}{\mathbf{F}} 
\newcommand{\bG}{\mathbf{G}} 
\newcommand{\bH}{\mathbf{H}}
\newcommand{\bK}{\mathbf{K}}
\newcommand{\bP}{\mathbf{P}} 
\newcommand{\bQ}{\mathbf{Q}} 
\newcommand{\bR}{\mathbf{R}} 
\newcommand{\bS}{\mathbf{S}} 
\newcommand{\bT}{\mathbf{T}} 
\newcommand{\bU}{\mathbf{U}} 
\newcommand{\bV}{\mathbf{V}} 
\newcommand{\bW}{\mathbf{W}} 
\newcommand{\bX}{\mathbf{X}} 
\newcommand{\bY}{\mathbf{Y}}
\newcommand{\bt}{\mathbf{t}}
\newcommand{\be}{\mathbf{e}}
\newcommand{\bh}{\mathbf{h}}
\newcommand{\boldsigma}{\bm{\sigma}}
\newcommand{\eqdist}{\ensuremath{\stackrel{\mathrm{d}}{=}}}
\newcommand{\st}{\ensuremath{\preceq_{\operatorname{st}}}}
\newcommand{\h}{\operatorname{ht}}
\newcommand{\mult}{\operatorname{mult}}
\providecommand{\eps}{}
\renewcommand{\eps}{\epsilon}
\providecommand{\ora}[1]{}
\renewcommand{\ora}[1]{\overrightarrow{#1}}
\DeclareRobustCommand{\SkipTocEntry}[5]{}
\definecolor{clou}{rgb}{0.8,0.25,0.5125}
\newtheorem{thm}{Theorem}
\newtheorem{lem}[thm]{Lemma}
\newtheorem{obs}[thm]{Observation}
\newtheorem{prop}[thm]{Proposition}
\newtheorem{cor}[thm]{Corollary}
\newtheorem{dfn}[thm]{Definition}
\newtheorem{conj}{Conjecture}
\numberwithin{equation}{section}
\numberwithin{thm}{section}
\def\rest#1{\raise-.2ex\hbox{\ensuremath|}_{#1}}
\newcommand{\dseq}{\mathrm{d}}
\newcommand{\bseq}{\mathrm{b}}
\newcommand{\cseq}{\mathrm{c}}
\newcommand{\diam}{\operatorname{diam}}
\newcommand{\rad}{\operatorname{rad}}
\newcommand{\dist}{\operatorname{dist}}
\newcommand{\len}{\operatorname{len}}
\newcommand{\height}
{\operatorname{ht}}
\newcommand{\grd}{\ensuremath{\mathcal{G}_{\mathrm{d}}}}
\newcommand{\gmd}{\ensuremath{\mathcal{G}^-_{\mathrm{d}}}}
\newcommand{\connd}{\ensuremath{\mathcal{C}_{\mathrm{d}}}}
\newcommand{\cyc}{\mathrm{cyc}}
\renewcommand*{\backref}[1]{}
\renewcommand*{\backrefalt}[4]{%
    \ifcase #1%
          \or $\uparrow$#2%
          \else $\uparrow$#2%
    \fi%
    }
\begin{document}
\title{Universal diameter bounds for random graphs with given degrees}
\author{Louigi Addario-Berry}
\author{Gabriel Crudele}

\begin{abstract}
Given a graph $G$, let  $\diam(G)$ be the greatest  distance between any two vertices of $G$ which lie in the same connected component, and let $\diam^+(G)$ be the greatest distance between any two vertices of $G$; so $\diam^+(G)=\infty$ if $G$ is not connected.

Fix a sequence $(d_1,\ldots,d_n)$ of positive integers, and let $\bG$ be a uniformly random connected simple graph with $V(\bG)=[n]\coloneq\{1,\ldots,n\}$ such that $\deg_\bG(v)=d_v$ for all $v \in [n]$. We show that, unless a $1-o(1)$ proportion of vertices have degree $2$, then $\mathbb{E}[\diam(\bG)]=O(\sqrt{n})$. It is not hard to see that this bound is best possible for general degree sequences (and in particular in the case of trees, in which $\sum_{v\in[n]} d_v = 2(n-1)$). We also prove that this bound holds without the connectivity constraint. As a key input to the proofs, we show that graphs with minimum degree $3$ are with high probability connected and have logarithmic diameter: if  $\min(d_1,\ldots,d_n) \ge 3$ and $\bG$ is a uniformly random simple graph with $V(\bG)=[n]$ such that $\deg_\bG(v)=d_v$ for all $v \in [n]$, then $\diam^+(\bG)=O_{\mathbb{P}}(\log n)$; this bound is also best possible. 
\end{abstract}

\keywords{Random graphs, random trees, diameter, switching}
\subjclass[2020]{05C80,60C05}

\maketitle

\tableofcontents

\section{Introduction}
All graphs considered in this paper are finite and contain no isolated vertices. Given a connected graph $G$, we write $\diam(G)=\max(\dist_G(u,v):u,v \in V(G))$, where $\dist_G(u,v)$ is the graph distance from $u$ to $v$ in $G$. For a not-necessarily-connected graph $G$, we write $\diam(G)$ for the greatest diameter of any connected component of $G$ and $\diam^+(G)=\max(\dist_G(u,v):u,v \in V(G))$, so $\diam^+(G)=\infty$ if $G$ is not connected. 

 A {\em degree sequence} is a sequence $\dseq=(d_1,\ldots,d_n)$ of positive integers with even sum. We set $|\dseq|_0=n$ and $|\dseq|_1=\sum_{v\in[n]} d_v$. Write $\grd$ for the set of all simple graphs $G$ with $V(G)=[n]$ such that $\deg_G(v)=d_v$ for all $v \in [n]$, and $\connd=\{G \in \grd:G\mbox{ is connected}\}$. Also, for any $b\in\N$, we write $n_b(\dseq)\coloneq|\{v \in [n]:d_v=b\}|$ for the number of entries of $\dseq$ which equal $b$. We say $\dseq$ is {\em $b$-free} if $n_b(\dseq)=0$. 

For a finite set $\cX$, by $\bX \in_u \cX$ we mean that $\bX$ is a uniformly random element of~$\cX$. Whenever we write $\bX\in_u\cX$, we implicitly assume that $\cX$ is nonempty.
\begin{thm}\label{thm:main}
    There exists $C>0$ such that the following holds for all $\eps>0$.
    Let $\dseq=(d_1,\ldots,d_n)$ be a degree sequence such that $n_2(\dseq)<(1-\eps)n$, and let $\bG \in_u \connd$. Then $\E{\diam(\bG)} \le C(\sqrt{n/\eps}+\log(n)/\eps)$.
\end{thm}

If $\eps$ is bounded away from $0$, then the square root dominates the logarithm, so Theorem~\ref{thm:main} has the following immediate corollary.

\begin{cor}\label{cor:fixed_eps}
    For all $\eps>0$, there exists $C>0$ such that the following holds. Let $\dseq=(d_1,\dots,d_n)$ be a degree sequence such that $n_2(\dseq)<(1-\eps)n$, and let $\bG\in_u\connd$. Then $\E{\diam(\bG)}\leq C\sqrt{n}$.
\end{cor}

More generally, provided that $\eps\geq\log^2(n)/n$, then $\log(n)/\eps\leq \sqrt{n/\eps}$ and the conclusion of Theorem~\ref{thm:main} simplifies to $\E{\diam(\bG)}=O(\sqrt{n/\eps})$. This bound can not in general be improved, as the following example illustrates. Fix positive integers $k$ and $m$, and let $\dseq$ be the sequence 
\[
(\,\underbrace{3, \ldots, 3}_{\text{$m$ times}}\;\;,
  \underbrace{1, \ldots, 1}_{\text{$m+2$ times}},\;
  \underbrace{2, \ldots, 2}_{\text{$km$ times}}\, ).
\]
The elements of $\cC_{\dseq}$ have $n=(k+2)m+2$ vertices and $(k+2)m+1=n-1$ edges, and so are {\em sub-binary trees} --- trees in which the maximum degree is three. Let $\bT \in_u \cC_{\dseq}$, and let $\bT'$ be the {\em homeomorphic reduction} of $\bT$, obtained from $\bT$ by replacing by a single edge each maximal path of $\bT$ all of whose internal vertices have degree 2. Then $\bT'$ is a uniformly random labeled binary tree with internal vertices labeled $\{1,\ldots,m\}$ and leaves labeled $\{m+1,\ldots,2m+2\}$. Such a tree has diameter $\Theta(\sqrt{m})$ in expectation \cite{MR680517}; moreover, the path between two random leaves in $\bT'$ also has expected length $\Theta(\sqrt{m})$. Now, conditionally given $\bT'$, by symmetry, for each edge $uv$ of $\bT'$, the expected length of the path between $u$ and $v$ in $\bT$ is $1+km/(2m+1)=\Theta(k)$. Thus, the path between two random leaves in $\bT$ has expected length $\Theta(k\sqrt{m})=\Theta(\sqrt{kn})$; if we write $n_2(\dseq)=(1-\eps)n$, then $k=\Theta(1/\eps)$ so $\E{\diam(\bT)}=\Theta(\sqrt{n/\eps})$.

For a graph $G$, we write $V(G)$ and $E(G)$ respectively for the vertex set and edge set of $G$. We also use the shorthand $v(G)=|V(G)|$ and $e(G)=|E(G)|$. If $G$ is connected, then the {\em surplus} of $G$ is the quantity $s(G)=1+e(G)-v(G)$, which is the maximum number of edges that can be deleted from $G$ without disconnecting $G$. Note that for a degree sequence $\dseq=(d_1,\ldots,d_n)$, writing $s(\dseq) = 1+\sum_{v\in[n]} d_v/2-n=1+|\dseq|_1/2-|\dseq|_0$, then for any $G \in \cC_{\dseq}$ we have $s(G)= s(\dseq)$. 

The proof of Theorem~\ref{thm:main} actually shows that $\E{\diam(\bG)}=O(\sqrt{n/\eps}+\log(s+1)/\eps)$ where $s$ is the surplus of $\dseq$. Since $s\leq e(\bG)\leq {n\choose 2}$, we have $\log(s+1)\leq 2\log n$, and this is how the $\log n$ term in Theorem~\ref{thm:main} arises. We believe it should be possible to replace the $\log(s+1)/\eps$ term by $\log(\eps n+1)/\eps\leq \sqrt{n/\eps}$. Since $\log(\eps n+1)/\eps\leq \sqrt{n/\eps}$, this would strengthen the conclusion of the theorem, yielding that  $\E{\diam(\bG)}=O(\sqrt{n/\eps})$ whatever the value of $\eps$.

The preceding example shows that the bound stated in Theorem~\ref{thm:main} can be saturated by certain ensembles of random trees, which have surplus zero; for any fixed $s > 0$, it is also not very hard to see that degree sequences of the form 
\begin{equation}\label{eq:deg3seq}
(\underbrace{3, \ldots, 3}_{\text{$m+s$ times}}\;\;\;,
  \underbrace{1, \ldots, 1}_{\text{$m+2-s$ times}},\;\;\;\; \underbrace{2, \ldots, 2}_{\text{$km$ times}}\,\;)
\end{equation}
yield graphs of surplus $s$ and diameter $\Theta(k\sqrt{m/s}\log (s+1))=\Theta(\sqrt{kn/s}\log(s+1))$, which 
likewise saturate the bound. (Many other examples are possible. We write $\log(s+1)$ rather than $\log s$ in order to include the case $s=1$.) 

Our second result establishes the same bound as our first result, but for uniform samples from $\cG_{\dseq}$ rather than from $\cC_{\dseq}$.
\begin{thm}\label{thm:disco}
    There exists $C>0$ such that the following holds for all $\eps>0$.
    Let $\dseq=(d_1,\ldots,d_n)$ be a degree sequence such that $n_2(\dseq)<(1-\eps)n$, and let $\bG \in_u \grd$. Then $\E{\diam(\bG)} \le C(\sqrt{n/\eps}+\log(n)/\eps)$.
\end{thm}
By the same reasoning that yielded Corollary~\ref{cor:fixed_eps} from Theorem~\ref{thm:main}, we obtain the following corollary of Theorem~\ref{thm:disco}. 
\begin{cor}\label{cor:fixed_eps_disco}
    For all $\eps>0$, there exists $C>0$ such that the following holds. Let $\dseq=(d_1,\dots,d_n)$ be a degree sequence such that $n_2(\dseq)<(1-\eps)n$, and let $\bG\in_u\grd$. Then $\E{\diam(\bG)}\leq C\sqrt{n}$. 
\end{cor}

An important step in the proofs of the above results is a logarithmic bound on the diameters of random graphs with given degrees, when the minimum degree is at least three, which is of independent interest.
\begin{thm}\label{thm:ker_diam_main}
There exists an absolute constant $C>0$ such that the following holds. Let $\dseq=(d_1,\ldots,d_n)$ be a degree sequence such that $n_1(\dseq)=n_2(\dseq)=0$, and let $\bG \in_u \cG_{\dseq}$. Then $\p{\diam^+(\bG)\geq62\log n}\le  C/n$. 
\end{thm}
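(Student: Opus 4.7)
The plan is to reduce to bounding the distance between a single fixed pair of vertices, then union bound. I would fix $u \neq v \in [n]$ and aim to show
\[
\p{\dist_{\bG}(u,v) > 1000 \log n} = O(\log^8 n / n^3),
\]
so that a union bound over the $\binom{n}{2}$ unordered pairs yields the theorem; disconnected graphs are handled by the same estimate, since $\dist_{\bG}(u,v) = \infty$ for $u,v$ in different components. The per-pair bound would follow from a \emph{ball-growth lemma}: with probability at least $1 - O(\log^8 n / n^3)$, the ball $B_L(u) := \{w \in [n] : \dist_{\bG}(u,w) \leq L\}$ contains strictly more than $n/2$ vertices, where $L = 500 \log n$. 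Applying this to both $u$ and $v$ and using pigeonhole gives $B_L(u) \cap B_L(v) \neq \emptyset$, hence $\dist_{\bG}(u,v) \leq 2L = 1000 \log n$.

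The ball-growth lemma is proved by BFS from $u$, combined with a switching argument. After revealing $B_k$ together with all edges from $B_k$ to $[n] \setminus B_k$, the remaining graph is uniform over simple graphs on $[n] \setminus B_k$ consistent with the residual degree sequence. The key claim, to be established by a switching argument, is that each yet-unmatched half-edge emanating from $\partial B_k$ attaches to any particular vertex $a \in [n] \setminus B_k$ with probability proportional to the residual degree of $a$, up to a multiplicative error of the form $1 + O(\mathrm{polylog}(n)/n)$. Since $d_w \geq 3$ for every $w$, each vertex of $\partial B_k$ contributes at least $d_w - 1 \geq 2$ unmatched half-edges, so as long as $|B_k| \leq n/2$, a Chernoff-type bound yields $|\partial B_{k+1}| \geq (5/4) |\partial B_k|$ with polynomially small failure probability, provided $|\partial B_k|$ is at least polylogarithmic in $n$. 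Iterating over $\Theta(\log n)$ BFS generations drives $|B_k|$ past $n/2$ within $L$ steps.

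The main obstacle is the switching argument underlying the near-uniform attachment step: given two simple graphs $G, G' \in \cG_\dseq$ whose revealed parts agree except that one specific half-edge has endpoint $a$ in $G$ versus $b$ in $G'$, one must estimate the ratio of numbers of valid completions. A one-edge switch, together with the delicate counting of simple graphs with a given degree sequence (in the style of McKay and Wormald), should yield the $1 + O(\mathrm{polylog}(n)/n)$ factor. Vertices of very high degree are problematic for such switching bounds, but they help the BFS by absorbing many vertices immediately, and can be handled via a preliminary truncation, pushing such vertices to be visited during the first few BFS steps. Propagating the switching error across $\Theta(\log n)$ BFS generations and combining it with the per-step Chernoff failure accounts for the $\log^8 n / n$ bound in the conclusion, with the exponent of $\log$ arising as the sum of a couple of factors of $\log n$ from the BFS depth times a few factors of $\log n$ from the switching and concentration estimates.
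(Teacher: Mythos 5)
The high-level shape of your proposal — BFS from a vertex, switching to control how half-edges on the frontier attach, exponential ball growth, union bound — matches the paper's strategy. However, the two specific ideas you lean on to close the argument are precisely the ones the paper works hardest to avoid, and I don't think either goes through as stated.

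First, you propose to grow the ball $B_L(u)$ past $n/2$ vertices and finish by pigeonhole. The paper deliberately does \emph{not} do this: it only grows the BFS queue to size $m/(8000\log^2 m)$, a vanishing fraction, and then proves a separate switching lemma (Lemma~\ref{lem:connect}) showing that two explorations with queues of that modest size already connect with probability $1-O(\log^8 m/m^2)$. The reason the paper stops early is that switching-based estimates degrade badly once the revealed ball occupies a constant fraction of the graph: the residual degree sequence is then heavily depleted and non-uniform, and none of the per-step comparisons used in Section~\ref{sec:ker_diam} stay tight. Your pigeonhole step eliminates the need for the analogue of Lemma~\ref{lem:connect}, but it requires growing the ball through exactly the regime where the analysis is hardest, and your proposal does not address how the attachment estimate survives there.

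Second, the attachment estimate itself — that an unmatched half-edge connects to a vertex $a$ with probability proportional to the residual degree of $a$ up to a factor $1+O(\mathrm{polylog}(n)/n)$ — is much stronger than anything the paper proves or needs, and it is likely false for general degree sequences with minimum degree $3$: no bound on the maximum degree is imposed, and a vertex of degree $\Theta(n)$ ruins this kind of two-sided, near-sharp estimate. McKay--Wormald-style asymptotics carry explicit hypotheses on the degree sequence that are not available here. You acknowledge this and gesture at a ``truncation'' that pushes high-degree vertices into the first few BFS steps, but that step is doing all the real work and is not specified. The paper's switching lemmas (Lemmas~\ref{lem:back_and_small_B2}, \ref{lem:loop}, \ref{lem:batch_of_5}, \ref{lem:batch_of_b}) only prove one-sided, constant-factor bounds on the probability of a back-edge, and it is a nontrivial part of the argument (batching, intersecting with the event $|B_{\bK}(K_t,3)|\le m/3$, Observation~\ref{obs:big_ball}) to convert those weak bounds into $O(\log^8 m/m^2)$-failure probability for a full growth phase. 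A plain ``Chernoff-type bound'' per BFS generation, as you propose, does not yield polynomially small failure probability from a single switching comparison; the paper invents the batch switching of Lemma~\ref{lem:batch_of_b} precisely to amplify the per-step estimate.

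So while your outline captures the correct flavor (switching-controlled BFS with exponential growth), it is missing the two technical devices that make the paper's proof work — stopping the growth early in favor of a queue-to-queue connectivity switching argument, and batched switching to obtain useful failure probabilities without near-uniformity of attachments — and the two ingredients you propose in their place (pigeonhole at $n/2$, $1+o(1)$ attachment uniformity) are each a genuine gap.
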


As an immediate corollary of Theorem~\ref{thm:ker_diam_main}, we obtain that the expected diameter of random connected graphs with minimum degree $3$ is at most logarithmic. \begin{cor}
There exists an absolute constant $C>0$ such that the following holds. Let $\dseq=(d_1,\ldots,d_n)$ be a degree sequence such that $n_1(\dseq)=n_2(\dseq)=0$, and let $\bG \in_u \cC_{\dseq}$. Then $\E{\diam(\bG)}\le C \log n$. 
\end{cor}
Remarkably, this fact does not appear to have been previously proven; it was highlighted as a question in \cite{gao}.

\subsection{Three conjectures}
\setlength{\leftmargini}{0em}

The examples sketched above show that for any fixed integer $s \ge 0$, there are degree sequences with surplus $s$ which saturate the bound of Corollary~\ref{cor:fixed_eps}, up to a multiplicative factor of order $O((s+1)^{-1/2}\log(s+2))$. However, it seems likely that when $s$ is large, the bound can always be improved.

\begin{conj}\label{conj:large_surplus}
Fix $\eps>0$ and let $(\dseq^n, n\ge1)$ be a sequence of degree sequences with $|\dseq^n|_0=n$ and  $n_2(\dseq^n)\leq(1-\eps)n$, and suppose that $s_n=s(\dseq^n) \to \infty$ as $n \to \infty$. For $n \ge 1$ let $\bG_n \in_u \cC_{\dseq^n}$. Then $\E{\diam(\bG_n)}=O((1+\sqrt{n/s_n})\log s_n)$ as $n \to \infty$.
\end{conj}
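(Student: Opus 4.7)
The natural strategy decomposes $\bG$ into three pieces: its kernel $K$ (obtained by smoothing maximal degree-2 paths in the $2$-core $C_2(\bG)$), the degree-2 subdivision paths in $C_2(\bG)$, and the pendant forest $F := \bG \setminus C_2(\bG)$. Writing $D_k := \diam(K)$, $L_e$ for the number of degree-2 vertices interior to the subdivision path associated to kernel edge $e$, and $h(F)$ for the maximum height of a pendant tree attached to $C_2$, one has
\[
\diam(\bG) \le 2 h(F) + \max_{P \text{ kernel geodesic}} \sum_{e \in P} (L_e + 1),
\]
which reduces the problem to three separate bounds (working component by component if $\bG$ is disconnected).

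For the kernel diameter, the inequalities $3 v(K) \le 2 e(K) = 2(v(K) + s_n - 1)$ give $v(K) \le 2(s_n - 1) = O(s_n)$. Conditionally on its (random) degree sequence, $K$ is uniformly distributed among simple graphs with that degree sequence by the standard tower of uniform distributions for the configuration model, and so Theorem~\ref{thm:ker_diam_main} applied in this conditional law yields $D_k = O(\log s_n)$ with probability $1 - O(\log^8 s_n / s_n)$.

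For the pendant forest and subdivision paths, conditional on the decomposition and on the degree sequence of each piece, each pendant tree is a uniformly random rooted labeled tree with its given degree sequence, and the $L_e$ are exchangeable conditional on their sum. Classical height estimates (going back to~\cite{MR680517} and extended to general degree sequences in subsequent work) bound the height of such a tree on $N$ vertices by $O(\sqrt{N})$ with sub-Gaussian tails on the scale $\sqrt{N}$. Given a \emph{balance estimate} showing that, with high probability, no pendant tree has more than $O((n/s_n)\,\mathrm{polylog}(s_n))$ vertices, a union bound over the pendant trees then gives $h(F) = O(\sqrt{n/s_n}\,\log s_n)$. An analogous concentration argument applied to $\sum_{e\in P} L_e$ along a kernel geodesic of length $D_k = O(\log s_n)$, combined with a union bound over the $O(v(K)^2)$ endpoint pairs, bounds the second term by $O(\sqrt{n/s_n}\,\log s_n)$, contingent on an analogous balance estimate for subdivision-path lengths.

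The main obstacle is precisely the balance requirement: one must rule out, with high probability, a single giant pendant tree or very long subdivision path absorbing a constant fraction of the degree-2 vertices and spoiling the union bound. Establishing balance requires simultaneous control of (i) the random partition of the $n_2$ degree-2 vertices between $C_2$ and $F$, (ii) the distribution of pendant-tree sizes across the $\sum_{v \in V(K)}(d_v - d_v^K)$ attachment slots of the kernel, and (iii) the distribution of subdivision-path lengths across the $e(K)$ kernel edges. We anticipate this being accomplished by a refined multi-scale switching analysis extending the techniques developed for Theorems~\ref{thm:main}, \ref{thm:disco}, and~\ref{thm:ker_diam_main}; once balance is established, the conditional uniformity afforded by the configuration model and routine concentration should suffice to complete the proof.
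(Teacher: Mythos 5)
The statement you are attempting to prove is presented as a \emph{conjecture}, not a theorem; the paper offers no proof of it, and indeed remarks immediately afterward that ``it would already be interesting to show that, under the assumptions of the above conjecture, it holds that $\E{\diam(\bG_n)}=o(n^{1/2})$''. So there is no proof in the paper to compare yours against, and your proposal does not close the gap either: you explicitly defer the ``balance estimate'' for pendant-tree sizes and subdivision-path lengths to ``a refined multi-scale switching analysis,'' and that estimate is precisely what is missing. Without it, the union bounds you invoke do not go through and you only recover something on the scale of $\sqrt{n}$, which is Theorem~\ref{thm:main}. Acknowledging a gap does not discharge it; as written this is a roadmap, not a proof.

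Beyond the acknowledged gap, at least one intermediate step is wrong as stated. You claim that ``conditionally on its (random) degree sequence, $K$ is uniformly distributed among simple graphs with that degree sequence by the standard tower of uniform distributions for the configuration model,'' and then apply Theorem~\ref{thm:ker_diam_main}. But the kernel $K(\bG)$ of a uniformly random simple graph $\bG$ is in general a \emph{multigraph}, and even on the event that it happens to be simple, it is not uniformly distributed conditionally on its degree sequence: the conditional probability of a given kernel is proportional to the number of ways it can be filled in with degree-$2$ paths and pendant trees, and that weight is kernel-dependent (this is exactly why the paper introduces the simple kernel $K^*$, the composition-biased trees, and proves Theorem~\ref{thm:ker_diam} for kernels of cores $\bC\in_u\cG_\dseq^-$ rather than applying Theorem~\ref{thm:ker_diam_main} directly). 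Your bound $v(K)\le 2(s_n-1)$ from $3v(K)\le 2e(K)$ is correct for a connected graph, and the overall decomposition $\diam(\bG)$ into forest height, subdivision lengths, and kernel diameter mirrors inequality~\eqref{eq:diamg_bd} in the paper, so the high-level shape of the argument is reasonable. But the uniformity-of-$K$ step would need to be replaced by the paper's core/kernel machinery, and the balance estimate remains genuinely open.
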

It would already be interesting to show that $\E{\diam(\bG_n)}=o\left(\sqrt{n}\right)$ as $n \to \infty$ under the assumptions of the above conjecture.

To motivate the second conjecture, it is useful to first present a result from \cite{addarioberry2024random}. 
A {\em rooted tree} is a tree $T$ with a distinguished node $\rho(T)$ called the root. In a rooted tree, each node aside from the root has a parent; the degree of the root is its number of children; and the degree of each non-root node is its number of children plus one. The {\em height} $\mathrm{ht}(T)$ is the greatest graph distance of any vertex from the root.

A {\em child sequence} is a finite sequence $\cseq=(c_v,v \in V)$ of non-negative integers with $\sum_{v \in V} c_v \le |V|-1$. For a nonnegative integer $b$, we write $n_b(\cseq)=|\{v \in V: c_v=b\}|$. Child sequence $\cseq$ is {\em sub-binary} if $n_b(\cseq)=0$ for all $b>2$. 
A rooted tree $T$ has child sequence $\cseq=(c_v,v\in V)$ if for all $v \in V$, vertex $v$ has $c_v$ children in $T$. (Note that for $T$ to have child sequence $\cseq$ it is necessary that $\sum_{v \in V} c_v = |V|-1$.)
The set of all rooted trees with child sequence $\cseq$ is denoted $\cT_\cseq$. 

The paper \cite{addarioberry2024random} showed that random sub-binary rooted trees have stochastically maximal height among trees with a given number of leaves and one-child vertices. More precisely, if $\bseq$ is a sub-binary child sequence and $\cseq$ is any child sequence with $n_0(\cseq)=n_0(\bseq)$ and $n_1(\cseq)=n_1(\bseq)$, and $\bT_\cseq \in_u \cT_\cseq$ and $\bT_\bseq \in_u \cT_\bseq$, then $\mathrm{ht}(\bT_{\cseq})\st \mathrm{ht}(\bT_{\bseq})$. We conjecture that a similar result holds for random graphs with a given degree sequence and surplus. 
\begin{conj}\label{conj:stoch} Fix degree sequences $\dseq$ and $\dseq'$ such that 
$s(\dseq)=s(\dseq')$. 
Suppose that $n_1(\dseq)=n_1(\dseq')$, that  $n_2(\dseq)=n_2(\dseq')$, and that $n_b(\dseq)=0$ for $b>3$. 
Let $\bG\in_u \cC_{\dseq}$ and let $\bG'\in_u \cC_{\dseq'}$. 
Then  $\diam(\bG')\st \diam(\bG)$. 
\end{conj}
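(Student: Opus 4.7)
The idea is to reduce Conjecture~\ref{conj:stoch} to a single elementary move and then couple the resulting random graphs via edge contraction. Starting from $\dseq'$, repeatedly apply the local move that replaces a vertex of degree $k \geq 4$ by a degree-$3$ vertex and a degree-$(k-1)$ vertex. Each move preserves $n_1$, $n_2$, and $s$, and reduces the sorted degree sequence in lexicographic order; by the hypothesis of the conjecture the process terminates at a sequence equal to $\dseq$ as a multiset. By transitivity of stochastic dominance, it therefore suffices to treat a single such split, in which $\dseq'$ has a distinguished vertex $v^*$ of degree $k \geq 4$ and $\dseq$ has $v^*$ of degree $k-1$ together with a fresh vertex $w$ of degree $3$.

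The target coupling sends $\bG \in_u \cG_\dseq$ to $\bG'$ obtained by contracting the edge $wv^*$. Since contraction never increases distances, $\diam(\bG') \leq \diam(\bG)$ whenever this is defined. On the ``good'' subset
\[
\cG_\dseq^\star := \bigl\{ G \in \cG_\dseq : wv^* \in E(G) \text{ and } N_G(w) \cap N_G(v^*) = \emptyset \bigr\},
\]
the coupling is clean: the split map $(G', S) \mapsto G$, for $G' \in \cG_{\dseq'}$ and $S \in \binom{N_{G'}(v^*)}{2}$, which attaches $w$ to $v^*$ and to the two elements of $S$, is a bijection onto $\cG_\dseq^\star$. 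Hence $|\cG_\dseq^\star| = \binom{k}{2} |\cG_{\dseq'}|$, and conditional on $\bG \in \cG_\dseq^\star$ the contracted graph is uniform on $\cG_{\dseq'}$.

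The main obstacle is to extend this to a coupling on all of $\cG_\dseq$: one must supply, for $\bG \notin \cG_\dseq^\star$, a partner $\bG' \in \cG_{\dseq'}$ so that both uniformity and the inequality $\diam(\bG') \leq \diam(\bG)$ are preserved. I would attempt this through a McKay--Wormald-style switching argument --- when $w \not\sim v^*$ or when $w, v^*$ share a common neighbor, local edge swaps can be used to transport $\bG$ into $\cG_\dseq^\star$ --- but the switching must simultaneously preserve measure and be non-increasing for the diameter, a delicate combination. The tree argument in~\cite{addarioberry2024random} is an analogue of this strategy but is substantially easier because acyclicity rules out all multi-edge issues. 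An alternative, possibly cleaner, route is to first prove the conjecture in the configuration model, where multigraphs are allowed and the split map preserves uniformity trivially, and only afterwards transfer the result to simple graphs via an analysis of the conditional probability of simplicity.
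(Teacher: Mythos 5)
This statement is Conjecture~\ref{conj:stoch} in the paper --- the authors explicitly present it as open, motivate it by analogy with the tree result of~\cite{addarioberry2024random}, and give no proof. So there is no ``paper proof'' to compare against; you are attempting to resolve an open problem.

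Your reduction to a single split move is sound: splitting a degree-$k$ vertex ($k\geq 4$) into a degree-$(k-1)$ vertex and a new degree-$3$ vertex preserves $n_1$, $n_2$, and the surplus $s=1+|\dseq|_1/2-|\dseq|_0$, the process terminates at a sub-cubic sequence with those same three parameters (and hence agrees with $\dseq$ as a multiset, since $n_3$ is then determined), and each intermediate sequence is graphical via the split map. The good-set bijection $(G',S)\mapsto G$ is also correct, giving $|\cG_\dseq^\star|=\binom{k}{2}|\cG_{\dseq'}|$ and the clean contraction coupling on $\cG_\dseq^\star$.

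However, as you acknowledge, the proof has a genuine gap precisely where the difficulty lies. Conditional uniformity of the contraction on $\cG_\dseq^\star$ does not yield a coupling of the two uniform marginals: you must also assign partners $\bG'\in\cG_{\dseq'}$ to graphs $\bG\notin\cG_\dseq^\star$ so that the marginal of $\bG'$ remains uniform and $\diam(\bG')\leq\diam(\bG)$ still holds pointwise. A switching argument that is simultaneously measure-preserving and diameter-non-increasing is not obviously available --- generic switches can increase distances, and the switchings used elsewhere in this paper only control probabilities of events, not pathwise monotone functionals. Your alternative configuration-model route has a parallel gap: even granting stochastic domination for the unconditioned multigraph model, stochastic domination is not in general preserved under conditioning (here, on simplicity), so the transfer step would need its own argument. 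In short, you have correctly identified the reduction and the core obstruction, but the obstruction is the open problem, not a technicality.
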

In view of the discussion around \eqref{eq:deg3seq}, this conjecture would imply that random graphs with surplus $s>0$ always have expected diameter at most $O((1+\sqrt{n/s})\log(s+1))$ unless almost all vertices have degree $2$, and so would strengthen Corollary~\ref{cor:fixed_eps} for such ensembles. 

\medskip
In keeping with the heuristic that as the degrees of random graphs increase, their diameters decrease, we propose the following conjecture. Informally, it states that among random graphs with given degree sequences with minimum degree $d$, the random $d$-regular graphs have the largest diameter, at least to first order. For fixed $d \ge 3$, let $\mathcal{S}_{n,d}$ be the set of all degree sequences $(d_1,\ldots,d_n)$ with $\min(d_1,\dots,d_n) \ge d$.
\begin{conj}
    For all $d \ge 3$ and $\eps > 0$, it holds that 
    \[
    \limsup_{n \to \infty}\sup_{\mathrm{d} \in \mathcal{S}_{n,d}} \mathbb{P}_{\bG \in_u \mathcal{G}_{\mathrm{d}}}(\diam(\bG)>(1+\eps)\log_{d-1} n) =0.
    \]
\end{conj}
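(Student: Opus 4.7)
The plan is to combine the logarithmic diameter bound of Theorem~\ref{thm:ker_diam_main} with a refined breadth-first search analysis in the configuration model, exploiting the fact that the size-biased degree is at least $d$ whenever the minimum degree is $d$. The target constant $1/\log(d-1)$ is precisely the Moore bound for $(d-1)$-ary exploration, so the proof should show that BFS balls in $\bG$ grow at least as fast as in the $d$-regular case.

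First I would reduce to the configuration model: using the standard conditioning-on-simplicity argument (or, when $\max_i d_i$ is large, a direct switching argument in the style of McKay--Wormald), it suffices to establish the bound for a uniform pairing $\bM$ on the half-edges. Theorem~\ref{thm:ker_diam_main} already gives $\diam^+(\bG) = O(\log n)$ with polynomial failure probability, so what remains is to sharpen the constant. Next I would prove a ball-growth lemma: for a fixed vertex $v$, writing $N_r(v)$ for the size of its $r$-ball, the number of outgoing half-edges from the $r$-th boundary is at least $(d-1)|\partial_r(v)|$ (since every vertex has degree $\ge d$ and one of its half-edges is consumed by the edge from the previous generation), and each such half-edge pairs to a fresh vertex with probability $1 - O(N_r(v)\cdot d_{\max}/|\dseq|_1)$. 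A Doob-style supermartingale argument on the count of half-edges that close back into the explored region then gives $N_r(v) \ge (d-1)^r/\operatorname{polylog}(n)$ with probability $1 - o(n^{-2})$, uniformly over $v$, for all $r \le (1/2 - \delta)\log_{d-1}n$.

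A sprinkling step then finishes the job. Once two balls $B_r(u)$ and $B_r(v)$ each have size at least $n^{1/2 + \delta}$, the expected number of half-edges from one that pair directly into the other is $\Omega(n^{2\delta})$, so the two balls are joined by an edge except on a very-low-probability event. Combining with the growth lemma and a union bound over pairs $(u,v)$, one concludes $\dist_{\bG}(u,v) \le 2(1/2 + \delta + o(1))\log_{d-1}n \le (1+\varepsilon)\log_{d-1}n$ for every pair, whenever $\varepsilon > 2\delta$, with probability $1 - o(1)$. The logarithmic diameter bound from Theorem~\ref{thm:ker_diam_main} is used to control any pathological early-exploration events and to guarantee that the graph is connected (so that $\diam = \diam^+$ here).

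The main obstacle will be making the ball-growth analysis uniform over all degree sequences in $\mathcal{S}_{n,d}$, in particular over those with a few vertices of very high degree. Hubs only help the growth rate (their appearance in a ball boosts $|\partial_r|$ immediately), but they also destroy concentration, since the conditional next-generation size can be dominated by a single high-degree vertex. The cleanest way around this is probably to split the analysis into two regimes: as long as BFS has not encountered any vertex of degree $\ge n^{\delta}$, the per-step growth is very well concentrated around $d-1$; the first time it does, the ball is already of size $n^{\delta}$, and a handful of additional steps suffice to reach the sprinkling threshold. A secondary obstacle is the transfer from pairings to simple graphs: when $\max_i d_i$ is $\Omega(\sqrt{n})$, the probability of simplicity is not bounded away from zero, and one must carry out the switching argument directly on $\cG_{\dseq}$ to preserve the $1-o(1)$ probability bound.
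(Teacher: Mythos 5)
This statement is presented in the paper as a \emph{conjecture}: the authors explicitly list it as open, note only that the $d$-regular case ``is not hard to see,'' and offer no proof, so there is no argument in the paper for you to be compared against. What you have written is therefore a sketch of an attack on an open problem, not a reconstruction of a known proof.

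As a sketch, your outline is the natural one — lower-bound BFS ball growth at rate $d-1$ via a size-biased argument, then sprinkle once two balls reach size $n^{1/2+\delta}$ — and you correctly identify the two serious obstructions. But you do not overcome either of them, and I believe they are precisely why the statement is a conjecture rather than a theorem. First, the transfer from the configuration model to $\cG_{\dseq}$ is not a ``standard conditioning-on-simplicity'' step here: the supremum in the conjecture is over \emph{all} of $\cS_{n,d}$, including sequences with $d_{\max}=\Theta(n)$, where the probability of simplicity in the pairing model is $e^{-\Theta(n)}$ rather than bounded away from zero. Conditioning then destroys any polynomially small failure probability, so the whole calculation must be carried out directly on $\cG_{\dseq}$ (or $\cA_{\dseq}$-type objects) via switching. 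The paper's kernel-exploration switching in Section~\ref{sec:ker_diam} does exactly this, but only yields a queue growth factor $1+\Theta(1)$ per batch and hence a constant $1000$ in front of $\log n$; extracting the \emph{sharp} rate $d-1$ from a switching argument is a genuinely different and substantially harder estimate, and nothing in your sketch indicates how the per-step loss would be controlled to within $o(1)$ multiplicative error uniformly over $\cS_{n,d}$. Second, your growth lemma as stated (``each half-edge pairs to a fresh vertex with probability $1-O(N_r(v)d_{\max}/|\dseq|_1)$'') is too coarse: once a hub of degree $\Theta(n)$ enters the ball, that bound becomes vacuous (the claimed back-edge probability is $\Omega(1)$ per half-edge), and your two-regime fix would need to show that the ball already has the \emph{right exponent}, not just size $\ge n^{\delta}$, before a hub is hit — otherwise you lose the factor needed to land at $(1+\eps)\log_{d-1}n$ rather than $C\log n$. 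The claimed uniform failure probability $o(n^{-2})$ for the growth lemma is also optimistic as written, since a single early back-edge occurs with probability $\Theta(1/n)$; one must argue quantitatively that $O(1)$ early back-edges cost only a polylogarithmic factor in $N_r(v)$, which is plausible but is not a consequence of any supermartingale inequality you invoke. In short: the plan is reasonable, but it does not constitute a proof, and it is not the paper's approach — the paper deliberately stops at the $O(\log n)$ bound (Theorem~\ref{thm:ker_diam_main}) and leaves the sharp constant open.
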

It is not hard to see that for all $d \ge 3$, a random $d$-regular graph with $n$ vertices has diameter $(1+o(1))\log_{d-1} n$ with high probability and in expectation, which justifies the informal statement of the conjecture just before its formal statement. The $d=3$ case of the conjecture is already interesting, and may be viewed as a slight weakening of Conjecture~\ref{conj:stoch}.


\subsection{Related work}

There is a large body of literature on diameters of random graphs. The first result on the subject we could find is due to Burtin \cite{MR369168}, who proved two-point concentration for the diameter of the Erd\H{o}s-R\'enyi random graph $G(n,p)$ when $p=p(n) \gg (\log n)/n$. (A slightly weaker result of the same flavour was independently proved by Bollob\'as \cite{MR621971}, and related results were proved around the same time by Klee and Larman \cite{MR627647}.) \L uczak showed that in the subcritical random graph, when $p=p(n)$ satisfies $n^{1/3}(1-np) \to \infty$, the connected component of $G(n,p)$ with maximal diameter is with high probability a tree, and found the asymptotic behaviour of this maximal diameter throughout the subcritical regime. Much later, Chung and Lu \cite{MR1826308}, and then Riordan and Wormald \cite{MR2726083}, substantially extended Burtin's result. The latter paper proved two-point concentration of the  diameter of $G(n,p)$ whenever $np \to \infty$, and additionally found tight estimates for the diameter throughout the supercritical regime  $n^{1/3}(np-1)\to \infty$. Independently and at roughly the same time, the diameter of $G(n,p)$ in the critical regime $n^{1/3}(np-1)=O(1)$ was studied in \cite{MR2892951,MR2435849}. Combined, the preceding results fully describe all possibilities for the asymptotic behaviour of the diameter of $G(n,p)$.

The diameters of many other random graph ensembles have also been studied, including the cases of random regular graphs \cite{MR685038,MR4245175}; scale-free random graphs \cite{MR2057681,MR3938775,MR3710797}; inhomogeneous random graphs \cite{MR3845099,MR2484340,MR2438903}; preferential attachment models \cite{MR2602984,MR4144085,MR4144077,log,asymp}; the configuration model under various degree assumptions \cite{MR2362640,MR3704860}; hyperbolic random graphs \cite{MR3813231}; and in directed settings \cite{MR4533728,MR4583664,MR4761372}. 

The setting we consider in this paper -- the uniform distribution over simple graphs with a given degree sequence -- is an extremely natural reference model that, to date, appears not to have been studied in generality. Past work on the diameters of random graphs with given degree sequences has focussed on regimes where the degree sequences are tame enough to be studied via the configuration model \cite[Chapter 7]{MR3617364}, and has largely focussed on settings exhibiting small-world phenomena (and in particular logarithmic diameters). It bears emphasis that much of the previous work on random graphs with given degree sequences also focusses on {\em sequences} of degree sequences which are in some sense convergent. For example, a frequent assumption is that the empirical distribution of the degree of a uniformly random vertex converges in distribution to a limiting random variable with (some number of) finite moments, and that the empirical moments likewise converge to the moments of the limiting random variable. By contrast, the results of this work are entirely non-asymptotic. As such, while they can be applied to obtain bounds in ``asymptotic'' settings, they apply far more generally.

While we believe that Theorem~\ref{thm:ker_diam_main} is an important contribution to the understanding of the logarithmic-diameter regime, we also hope our paper will spur interest in developing a finer understanding of typical and extreme distances in random graphs with ``sparser'' degree sequences, where it is necessary to study both the cycle structure (core) of the graph and the structure of the trees which attach to the core. The study of other graph properties, such as mixing times and cutoff phenomena, would also be interesting for such graphs.

\subsection{Notation}
We now introduce some notation that will be used throughout the paper, including in the proof overview which immediately follows. 

The {\em height} of a rooted tree $T$, denoted $\height(T)$, is the greatest distance of any vertex from the root of $T$. A {\em rooted forest} is a set $F=\{T_1,\ldots,T_k\}$ of rooted trees with disjoint vertex sets; the {\em root set} of $F$ is $\{\rho(T_i):i \in [k]\}$. The height of $F$, denoted $\height(F)$, is the greatest height of any of its constituent trees. 

Rooted forest $F=\{T_1,\ldots,T_k\}$ has child sequence $\cseq=(c_v,v\in V)$ if $\bigcup_{i \in [k]} V(T_i)= V$ and for all $v \in V$, vertex $v$ has $c_v$ children in its tree in $F$. 
 
The set of all forests with child sequence $\cseq$ is denoted $\cF_{\cseq}$. Given a subset $A$ of the vertex set of a child sequence $\cseq$, we write $\cF_{\cseq,A}$ for the set of rooted forests with child sequence $\cseq$ and root set $A$, and $\cseq_{A}$ for the restriction of $\cseq$ to $A$. We use the analogous notation for restrictions of degree sequences. 

For positive integers $m$ and $h$, an {\em $m$-composition of $h$} is a sequence of $m$ nonnegative integers summing to $h$. We write $\cP_{m,h}$ for the set of all $m$-compositions of $h$. 

We shall encode multigraphs as triples $(V,E,\iota)$, where $V$ is the vertex set, $E$ is the edge set, and $\iota:E \to {V\choose 2}\cup {V \choose 1}$ is the endpoint map. For all multigraphs considered in this work, the vertex set $V$ is totally ordered, and for $e \in E$ we define $e^-=\min \iota(e)$ and $e^+=\max \iota(e)$. We say that edges $e,f\in E$ are \emph{parallel} if $\iota(e)=\iota(f)$, and write $\mult(e)$ for the number of edges parallel to $e$. 

Finally, we note that the notation $\cG_\dseq$, $\connd$ and the like makes sense even if $\dseq=(d_v,v \in V)$ is a degree sequence indexed by a finite set $V$ which is not an initial segment of $\N$; we will require this at a few points in the proof. Relatedly, for a graph $G=(V,E)$ we refer to $\dseq(G)=(\deg_G(v),v \in V)$ as the degree sequence of $G$, where for concreteness we list the vertex degrees according to the total order of $V$.

\subsection{Proof outline}\label{sec:pf_outline}
Given a graph $G$, write $C(G)$ for the maximum subgraph of $G$ of minimum degree at least $2$, and call $C(G)$ the {\em core} of $G$. Thus, if $G$ is a forest, then $C(G)$ is empty. We say a graph $C$ is a core if it has minimum degree $2$. For a vertex $v \in V(G)$ not in a tree component of $G$, we write $\alpha(v)$ for the (unique) core vertex at minimum graph distance from $v$. If $v$ lies in a tree component of $G$, then we take $\alpha(v)$ to be the least labeled leaf in this tree. 

We let $F(G)$ be the rooted forest obtained from $G$ as follows. For each edge $rs$ with $r \not\in V(C(G))$ and $s \in V(C(G))$, remove edge $rs$ and let the resulting tree, which contains $r$, have root $r$. Second, for each tree component $T'$ of $G$, delete the leaf of $T'$ with minimum label and root the resulting tree $T$ at the unique neighbour of that leaf. (All the graphs we consider in this paper have $V(G)\subset\N$, so ``minimum label'' makes sense.) Finally, remove all vertices of $C(G)$ and their remaining incident edges. Setting $c_v=\mathrm{deg}_G(v)-1$ for $v \in V(F(G))$, then $F(G)$ has child sequence $\cseq=(c_v,v \in V(F(G)))$. 

Assuming that $G$ is connected, the {\em kernel} of $G$ is the multigraph $K(G)$ constructed from $G$ as follows. If $s(G)\geq 2$, then $K(G)$ is obtained from the core $C=C(G)$ by replacing each maximal path all of whose internal vertices have degree 2 by a single edge. 
If $s(G)=1$ and $G$ has a leaf, then we let $\ell$ be the leaf of minimum label, and define $K(G)$ to be a single loop edge at the core vertex $\alpha(\ell)$ nearest to $\ell$. In the remaining cases, where $G$ is either a tree or a cycle, $K(G)$ is empty. Note that if $s(G)\geq 2$, then $K(G)=K(C)$ and $V(K)=\{v\in V(C):\deg_C(v)\geq 3\}$. If $G$ is not connected but has connected components $G_1,\dots,G_k$, then $K=K(G)$ is the disjoint union of $K(G_1),\dots,K(G_k)$. For $e \in E(K)$ we write $C(e)$ for the path in $C$ corresponding to edge $e$, and we write $|C(e)|$ for the number of internal vertices on $C(e)$.

Observe that $C(G)$ can be constructed from $G$ by repeatedly deleting leaves, and $K(G)$ can be constructed from $C(G)$ by repeatedly replacing a path with two edges whose internal vertex has degree 2 by a single edge. Deleting leaves and replacing such paths of length 2 by edges are both operations that do not change surplus, so $s(G)=s(C(G))=s(K(G))$.

Now write $C=C(G)$, $F=F(G)$, and $K=K(G)$. For $u,v \in V(G)$ in the same component of $G$, a shortest path between $u$ and $v$ in $G$ may be decomposed into three parts: the (unique) shortest paths from $u$ to $\alpha(u)$ and from $v$ to $\alpha(v)$, and a shortest path between $\alpha(u)$ and $\alpha(v)$. Unless $\alpha(u)=\alpha(v)$, this shortest path between $\alpha(u)$ and $\alpha(v)$ lies in $C$ and can be further decomposed into three parts: a path from $\alpha(u)$ to a vertex $u'\in V(K)$, a path from $\alpha(v)$ to a vertex $v'\in V(K)$, and a shortest path between $u'$ and $v'$ in $C$. It follows that 
\begin{align}\label{eq:diamg_bd}
\begin{split}
   \diam(G) & \le 2(\height(F)+1)+\diam(C)\\
& \le  2(\height(F)+1)+(\diam(K)+2)\max(|C(e)|+1:e \in E(K)). 
\end{split}
\end{align}
Therefore, in order to bound $\diam(G)$, it suffices to control $\h(F)$, $\max(|C(e)|:e \in E(K))$, and $\diam(K)$. In the next three subsections, we  explain our techniques for each of these, in order.

\subsubsection{The height of the forest.}\label{sec:forest height} 
Fix a degree sequence $\dseq=(d_1,\ldots,d_n)$, let $\bG\in_u \grd$, and write $\bC=C(\bG)$, $\bF=F(\bG)$, and  $\bK=K(\bG)$. 
Writing $\bR=\{v \in [n]: d_v=1,v \not\in V(\bF)\}$, then $V(\bC)=[n]\setminus(\bR\cup V(\bF))$, and $\bF$ has $\mathbf{s}$ roots, where 
\[
\mathbf{s}=|\bR|+\sum_{v \in V(\bC)} (d_v-\deg_{\bC}(v))=\sum_{v \in [n]\setminus  
V(\bF)} (d_v-\deg_{\bC}(v))\, .
\] 

To reconstruct $\bG$ from $\bF$ and $\bC$, it suffices to specify the partition $(\bF^{v},v \in \bR\cup V(\bC))$ of the trees of $\bF$, where $\bF^{v}$ is the set of trees of $\bF$ whose root is incident to $v$ in $\bG$. 
The number of possible partitions is 
\[
\frac{\mathbf{s}!}{\prod_{v \in V(\bC)} (d_v-\deg_{\bC}(v))!}=\frac{\mathbf{s}!}{\prod_{[n]\setminus V(\bF)} (d_v-\deg_{\bC}(v))!}. 
\]
Since this number only depends on $\bF$ through its vertex set $V(\bF)$, it follows that conditionally given $V(\bF)$, $\bF$ is a uniformly random forest with child sequence $\cseq=(c_v,v \in V(\bF))$. 

Next, let $\bF'$ be the homeomorphic reduction of $\bF$, obtained by replacing by a single edge each maximal path of $\bF$ all of whose internal vertices have exactly one child. Write $h=|V(\bF)\setminus V(\bF')|$ and $m=e(\bF')$. To recover $\bF$ from $\bF'$ it suffices to specify, for each edge $uv\in E(\bF')$, the (possibly empty) ordered sequence of one-child vertices appearing on the $u-v$ path in $\bF$. The number of possibilities for this data is 
\[
h! |\cP_{m,h}| = h! {h+m-1 \choose m-1}.
\]
This number only depends on $\bF$ and $\bF'$ through their vertex sets (since the degrees are specified, $e(\bF')$ is determined by $V(\bF')$), so conditionally given $V(\bF')$, $\bF'$ is a uniformly random forest with child sequence $\cseq'=(c_v,v \in V(\bF'))$. 

We now appeal to one of the main results of \cite{addarioberry2024random}, which yields\footnote{See (2.8) in the proof of Theorem~6 in~\cite{addarioberry2024random}. That result is stated for random trees with given child sequences, not random forests, but the extension to random forests is an immediate consequence of Lemma~\ref{lem:subgaussian}.} the following theorem.
\begin{thm}\label{thm:ad24}
    Fix a 1-free child sequence $\cseq'=(c_v,v \in V)$, and let $\bF'\in_u\cF_{\cseq'}$. Then for all $x\geq0$,
    \begin{equation*}
        \p{\height(\bF')>x\sqrt{|V|}}\le 4\exp\left(-\frac{x^2}{2^8}\right).
    \end{equation*}
\end{thm}
If $\dseq$ is $2$-free, then $\cseq=(c_v,v \in V(\bF))$ is $1$-free, so $\bF'=\bF$; since $\diam(\bG)-\diam(\bC) \le 2(\height(\bF)+1)$ and $|V(\bF)|\le n$, in this case the above theorem immediately implies that $\E{\diam(\bG)}\le \E{\diam(\bC)}+O(\sqrt{n})$.

If we assume only that $n_2(\dseq) < (1-\eps)n$, then to bound $\diam(\bG)$ as above, we also need to control $\height(\bF)-\height(\bF')$. For this, we will essentially show that, on average, each edge in $\bF'$ is subdivided $O(1/\eps)$ times by one-child vertices in $\bF$ and, moreover, the number of times that a long path in $\bF'$ is subdivided by one-child vertices is concentrated enough that $\E{\height(\bF)\mid\bF'} = O((\height(\bF')+\log n)/\eps)$. (In fact, we will also need to control the effect of subdivisions on distances in the core, and we will do this at the same time as we handle their effect on $\bF$, so our eventual argument is slightly different from the sketch in this paragraph; but the idea is the same.)

\subsubsection{The greatest length of a core path.}
We next explain our approach to bounding $\max(|\bC(e)|: e \in E(\bK))$. To get the key ideas across, it is useful to assume that $\dseq$ is $2$-free, and also to condition on the events that $\bK$ is a simple graph and that for all $v \in V(\bK)$, $\deg_{\bG}(v)=\deg_\bK(v)$, so that $v$ has no neighbours in $\bF$. Write $m=e(\bK)$, and list the edges of $K$ in lexicographic order as $e_1,\ldots,e_m$, where $e_i=u_iv_i$ and $u_i < v_i$. If $|\bC(e_i)|>0$ then let $\bT_i$ be the subgraph of $\bG$ induced by the set of vertices $\{v \in V(\bG):\alpha(v) \in V(\bC(e_i)) \setminus\{u_i,v_i\}\}$. Then let $u_i'$ and $v_i'$ be the vertices of $\bT_i$ nearest to $u_i$ and $v_i$, respectively; it is possible that $u_i'=v_i'$. 
Now define a tree $\bT$ as follows: for each $i \in [m]$ with $\bT_i$ non-empty let $j>i$ be minimal such that $\bT_j$ is non-empty, and add an edge from $v_i'$ to $u_j'$; if no such $j$ exists then instead add an edge from $v_i'$ to vertex $0$. Root the resulting tree at $u_{i_0}'$, where $i_0=\min(i \in [m]: \bT_i\mbox{ is non-empty})$. An example appears in Figure~\ref{fig:biasedtree}. 
\begin{figure}[htb]
    \centering
    \includegraphics[width=0.4\linewidth,page=2]{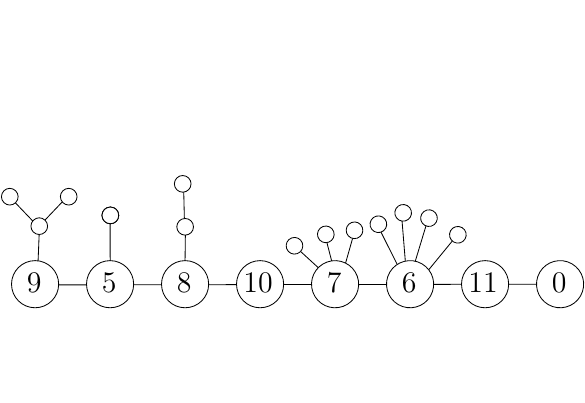}
    \includegraphics[width=0.4\linewidth,page=1]{kernel2.pdf}
    \caption{Left, a connected graph $\bG$, with its kernel vertices circled in bold. Only labels of vertices in the core are shown, for readability. Right: The tree $\bT$ constructed from (the trees hanging from the core paths of) $\bG$, with root $9$. Listing the kernel edges as $e_1,\ldots,e_6$ in lexicographic order, then the numbers of internal vertices of the paths $C(e_1),\ldots,C(e_6)$ are $(2,3,0,2,0,0)$, which is a partition of $7=\height_{\bT}(0)$ with $6=e(K(\bG))$ parts. The graph $\bG$ can be recovered from $\bT$ together with this partition.}
    \label{fig:biasedtree}
\end{figure}

Write $B=[n]\setminus V(\bK)$, and let $\cseq=(c_v,v \in B\cup \{0\})$ be the child sequence with $c_0=0$ and $c_v=d_v-1$ for $v \in B$. Then $\bT$ has child sequence $\cseq$. Moreover, the graph $\bG$ can be recovered from $\bT$ together with the data $(|\bC(e_i)|,i \in [m])$. The set of internal vertices of $\bC(e_1),\ldots,\bC(e_m)$ are precisely the vertices on the path in $\bT$ from its root to the leaf $0$ (excluding $0$); thus, conditionally, given $\bT$, the number of possibilities for this data is $|\cP_{m,\h_\bT(0)}|$. It follows that conditionally given (simple) $\bK$ and given that $(\deg_\bG(v),v \in V(\bK))=(\deg_{\bK}(v),v \in V(\bK))$, 
\[
(\bT,(|\bC(e_i)|,i \in [m])) \in_u \{(T,P):T \in \cT_{\cseq}, P \in \cP_{m,\height_T(0)}\}\, .
\]
Thus, conditionally given $m$ and given its child sequence $\cseq$, $\bT$ is distributed as a random tree with child sequence $\cseq$ chosen with probability proportional to the number of $m$-compositions of the root-to-$0$ path. We build on the techniques of \cite{addarioberry2024random} in order to  establish that, up to a parity constraint, among all such composition-biased random rooted trees with a given 1-free child sequence, the height of a fixed leaf is stochastically maximized when the child sequence is binary. More precisely, we prove the following theorem. The \emph{parity} of an integer $n$ is the remainder of $n$ modulo 2.
\begin{thm}\label{thm:st_dom}
    Fix positive integers $m$ and $n$, and let $\pi$ be the parity of $n$. Let $\cseq=(c_0,\dots,c_n)$ be a $1$-free child sequence with $c_0=0$ and let $\bseq=(b_0,\dots,b_{n+\pi})$ be a binary child sequence with $b_0=0$. Then with $(\bT,\bP)\in_u\{(T,P): T\in \cT_\cseq, P\in\cP_{m,\h_T(0)}\}$ and $(\bS,\bQ)\in_u\{(S,Q): S\in \cT_\bseq, Q\in\cP_{m,\h_S(0)}\}$,
    \begin{equation*}
        \h_{\bT}(0) \st \h_{\bS}(0).
    \end{equation*}
\end{thm}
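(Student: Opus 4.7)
The plan is to adapt the switching method of \cite{addarioberry2024random} — which established an analogous stochastic-domination statement for the \emph{unbiased} uniform distribution on $\cT_\cseq$ — to the composition-biased setting. The strategy is to interpolate between $\cseq$ and $\bseq$ through a sequence of local ``binarization'' moves, proving stochastic domination of $\h_\bT(0)$ at each step and composing.

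Two basic local moves drive the interpolation. The \emph{bisection} move takes $\cseq$ with some $c_v \ge 3$ and forms $\cseq^+$ by decreasing $c_v$ by one and adjoining a new vertex of child-count two (adding one vertex). The \emph{leaf-promotion} move takes $\cseq$ with $c_v \ge 4$ and a non-root leaf $w$, transferring two of $v$'s children to $w$ (keeping the vertex count fixed and decreasing $n_0$ by one). By a suitable combination of these moves, any $1$-free $\cseq$ of size $n+1$ can be transformed into a binary child sequence of size $n + \varepsilon + 1$; the parity $\varepsilon$ appears precisely because each bisection changes the total vertex count by one and we need $|\bseq| - |\cseq| = \varepsilon$. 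Any two binary child sequences of the same size with $c_0 = 0$ induce identical composition-biased distributions of $\h(0)$ (they differ only by an irrelevant relabeling of internal vertices), so the theorem reduces to verifying stochastic domination under a single move of each type.

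The main obstacle is this single-move comparison, and it is genuinely delicate: stochastic domination of $\h(0)$ in the unbiased distribution does \emph{not} automatically imply stochastic domination in the composition-biased distribution — simple two-point examples show that this implication fails even when the weight is increasing. The plan is therefore a dedicated switching bijection, tracked at the level of pairs $(T,P)$ with $P \in \cP_{m,\h_T(0)}$. For the bisection move, given $T \in \cT_\cseq$, each unordered pair of children of $v$ yields a tree $T^+ \in \cT_{\cseq^+}$ obtained by making those children children of a new vertex $v^+$, which itself becomes a child of $v$; under this $\binom{c_v}{2}$-to-one correspondence, $\h_{T^+}(0) \in \{\h_T(0), \h_T(0)+1\}$, never decreasing, with the increment occurring precisely when the root-to-$0$ path enters $v$'s subtree via a bisected branch. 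A composition $P$ extends to a composition of $\h_{T^+}(0)$ by padding with a zero when required, and the inequality $|\cP_{m,h+1}| \ge |\cP_{m,h}|$ ensures the weight moves in the favorable direction. The technical crux is a double-counting verification that, after combining the switching multiplicity with the composition extension, one obtains
\begin{equation*}
    N_h(\cseq)\,N_0(\cseq^+) \; \le \; N_0(\cseq)\,N_h(\cseq^+),
\end{equation*}
where $N_h(\cdot) = \sum_T |\cP_{m,\h_T(0)}|\,\I{\h_T(0) \ge h}$; a parallel argument handles the leaf-promotion move. This double count, and the careful tracking of how the composition extension interacts with the switching in the presence of a distinguished leaf, is where the argument of \cite{addarioberry2024random} must be most substantially adapted.
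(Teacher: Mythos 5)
Your proposal takes a genuinely different route from the paper. The paper does not interpolate through local moves on the child sequence: instead it uses the Foata--Fuchs line-breaking bijection to encode $\h_T(0)$ as one less than the first-repetition index $r(\bV)$ of a random word, shows that the composition bias is exactly a conditioning of the form $r(\bV)\ge\max\bA-m+1$ for a suitable independent random index set $\bA$ (Lemma~\ref{lem:T_to_Tplus} and the surrounding argument), and then applies a general lemma (Proposition~\ref{prop:general}) that propagates two one-sided stochastic dominations through such a conditioning; that lemma is itself proved by coupling a pair of Markov chains. The two inputs to that lemma, Lemmas~\ref{lem:pointwise} and~\ref{lem:max}, are direct calculations. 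Nothing in the paper proceeds by bisection or leaf-promotion moves.

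As written, your proposal has two concrete gaps. First, the move set is insufficient. Take $\cseq=(0,3,3,0,0,0,0)$: here $n=6$, so $\varepsilon=0$ and the target binary sequence must have the same length $n+1=7$. Bisection strictly increases the length, so it cannot be used at all (you have no length-decreasing move with which to compensate), while leaf-promotion requires some $c_v\ge4$, which this $\cseq$ lacks. Hence $\cseq$ cannot be transformed into a binary sequence by any combination of your two moves. One could add a move that decrements two child-counts $\ge3$ while promoting a leaf, but every new move type requires its own domination proof. Second, and more seriously, the per-move inequality
\begin{equation*}
    N_h(\cseq)\,N_0(\cseq^+)\le N_0(\cseq)\,N_h(\cseq^+)
\end{equation*}
is where all the difficulty of the theorem is concentrated, and you flag it as the ``technical crux'' without offering an argument. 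The bisection correspondence is $\binom{c_v}{2}$-to-one, whether $\h_{T^+}(0)$ increments depends on which of $v$'s children the root-to-$0$ path passes through, and the extra weight ratio $|\cP_{m,\h+1}|/|\cP_{m,\h}|$ is itself $\h$-dependent; these effects do not obviously combine monotonically, and you yourself note that monotonicity of the weight alone is not enough. Without that verification, the proposal is a plan rather than a proof.
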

The proof of this theorem, which appears in Section~\ref{sec:st_dom} is surprisingly challenging, and requires an auxiliary stochastic domination result which is proved by studying the asymptotic behaviour of a Markov chain reminiscent of importance sampling. Once the stochastic domination is proved, however, we are able to establish the following sub-Gaussian tail bound by proving a corresponding bound for the height of a fixed node in a composition-biased binary tree. 
\begin{thm}\label{thm:tail_bound}
    Fix positive integers $m$ and $n$ with $n\geq 64m$. Let $\cseq=(c_0,\dots,c_n)$ be a 1-free child sequence with $c_0=0$ and let $\bh=\h_\bT(0)$, where $(\bT,\bP)\in_u\{(T,P):T\in\cT_\cseq,P\in\cP_{m,\h_T(0)}\}$. Then for all $x\geq 0$,
    \begin{equation*}
        \p{\bh\geq 2\sqrt{mn}+x\sqrt{n}} \leq \exp\left(-\frac{x^2}{3}+4\right).
    \end{equation*}
\end{thm}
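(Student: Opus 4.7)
The plan is to first reduce to the case of a binary child sequence via Theorem~\ref{thm:st_dom}, and then to analyze the resulting composition-biased binary tree by explicit Catalan-type combinatorics and a Chernoff-style calculation. By Theorem~\ref{thm:st_dom}, there is a binary child sequence $\bseq=(b_0,\dots,b_{n+\varepsilon})$ with $b_0=0$ such that $\bh\st\h_\bS(0)$ for $(\bS,\bQ)\in_u\{(S,Q):S\in\cT_\bseq,\,Q\in\cP_{m,\h_S(0)}\}$, so it suffices to prove the tail bound for $\h_\bS(0)$. Marginalising $\bQ$ shows that $\bS$ is a Radon-Nikodym bias of $\bS_u\in_u\cT_\bseq$ by the weight $w(h)=\binom{h+m-1}{m-1}$: writing $H=\h_{\bS_u}(0)$,
\begin{equation*}
    \p{\h_\bS(0)\geq h_*}=\frac{\E{w(H)\I{H\geq h_*}}}{\E{w(H)}},
\end{equation*}
so the task reduces to bounding this ratio at $h_*=2\sqrt{mn}+x\sqrt{n}$.

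Next I would pin down the law of $H$ explicitly. Since the leaf labels of $\bseq$ are interchangeable, $H$ is distributed as the depth of a uniformly chosen leaf in a uniform plane binary tree with $\ell=(n+\varepsilon+2)/2$ leaves. A short cycle-lemma computation gives
\begin{equation*}
    \#\{(\tau,v):\tau\text{ a plane binary tree with }\ell\text{ leaves, }v\text{ a leaf of }\tau\text{ at depth }h\}=2^{h}\cdot\frac{h}{\ell-1}\binom{2\ell-h-3}{\ell-h-1},
\end{equation*}
so that $\p{H=h}$ equals this quantity divided by $\binom{2\ell-2}{\ell-1}$. Applying Stirling yields the sharp Rayleigh-type estimate $\p{H=h}\asymp \frac{h}{n}e^{-h^{2}/(2n)}$, and in particular a tail bound $\p{H\geq h}\leq A\,e^{-h^{2}/(2n)}$ valid for all $h\leq n$ with an explicit universal constant $A$.

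Combining these, the composition-biased density of $\h_\bS(0)$ is approximately proportional to $h^{m}e^{-h^{2}/(2n)}$, which under the rescaling $Y=H/\sqrt{n}$ becomes $\propto y^{m}e^{-y^{2}/2}$: a chi distribution with $m+1$ degrees of freedom, peaked at $\sqrt{m+1}$ with Gaussian fluctuations at scale $1$. The hypothesis $n\geq 64m$ ensures that the threshold $h_*=2\sqrt{mn}+x\sqrt{n}$ sits comfortably above the mode $\sqrt{mn}$ of $H$, so Laurent--Massart-style concentration for $\chi_{m+1}$ (giving $\p{\chi_{m+1}\geq\sqrt{m+1}+s}\leq e^{-s^{2}}$) translates into the claimed sub-Gaussian tail. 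To make this rigorous, I would upper-bound the numerator $\E{w(H)\I{H\geq h_*}}$ by summation-by-parts against the Rayleigh tail of $H$ (using $w(h)\leq(h+m)^{m-1}/(m-1)!$), and lower-bound the denominator $\E{w(H)}$ by evaluating the exact formula near the mode $h\asymp\sqrt{mn}$. The main obstacle is getting sharp enough control of the constants to secure the exponent $x^{2}/3$: the general-purpose bound from Theorem~\ref{thm:ad24} (with constant $2^{-8}$) is far too weak and must be replaced by the sharper Catalan estimate above, while Stirling-type finite-$n$ corrections in the denominator must be tracked carefully (the additive slack $+4$ in the claim absorbs these corrections).
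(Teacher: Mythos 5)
Your plan follows the same overall strategy as the paper: reduce to a binary child sequence via Theorem~\ref{thm:st_dom}, recognize that marginalising $\bP$ makes $\h_\bS(0)$ a Radon--Nikodym tilt of the unbiased leaf height $H$ by the weight $w(h)\propto (h+m-1)_{m-1}$, and then bound the ratio $\E{w(H)\I{H\ge h_*}}/\E{w(H)}$. Where you diverge is in the technical implementation of that ratio bound. The paper works directly with the tilted probability mass function $p(h)=(h+m-1)_{(m-1)}\frac{h}{n-h}\prod_{i<h}(1-\frac{i}{n-i})$ coming from the line-breaking construction (Lemma~\ref{lem:pmf}), and controls $\sum_{k\ge h_*}p(k)/\sum_k p(k)$ entirely through elementary ratio bounds $p(h+1)/p(h)\approx\exp(m/h-h/n)$ (Lemmas~\ref{lem:ratio}, \ref{lem:num}, \ref{lem:denom}); this avoids any Stirling-type asymptotics and lets the product formula telescope cleanly. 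You instead propose to pin down the law of the unbiased $H$ via a Catalan/cycle-lemma count and then obtain an explicit Rayleigh tail bound $\p{H\ge h}\le A e^{-h^2/2n}$ by Stirling, after which summation by parts against the polynomial weight plus a chi-distribution heuristic gives the claim. Your explicit count is consistent with Lemma~\ref{lem:pmf} (after the translation $n+\varepsilon=2\ell-2$), so the route is sound, and it is arguably more modular; but it pushes the difficulty into extracting an explicit universal constant $A$ from Stirling uniformly over $h\le n$, which you correctly flag as the place where the additive slack must be spent. Two small slips worth noting: the Laurent--Massart bound for $\chi_{m+1}$ gives roughly $e^{-s^2/2}$ rather than your stated $e^{-s^2}$ (this still comfortably yields the target $x^2/3$ after the parity adjustment, so it is harmless), and the mode you reference, $\sqrt{mn}$, is that of the tilted height, not of $H$ itself (whose mode is $\asymp\sqrt{n}$); the hypothesis $n\ge 64m$ is really what guarantees that the tilted mode sits safely inside the feasible range $[0,n/2]$ and that the Gaussian approximation is valid there. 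These are minor imprecisions in the sketch rather than structural gaps.
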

The proof of Theorem~\ref{thm:tail_bound} appears in Section~\ref{sec:tailbd}. 
It then follows from fairly straightforward arguments for the sizes of entries in a uniformly random composition that for any fixed kernel edge $e_i$, $|\bC(e_i)|/(1+\sqrt{n/m})$ has sub-exponential tails, and so in particular $\E{\max(|\bC(e_i)|:i\in[m])} = O((1+\sqrt{n/m})\log m)$ by a union bound. (The arguments become somewhat more complicated when we remove the assumptions that $\bK$ is simple and that $\dseq$ is $2$-free, but the key ideas are present here.) 

\subsubsection{The diameter of the kernel.} 
Given the above, the remaining step in the proof of Theorem~\ref{thm:main} is to bound the diameter of the kernel. At this point, we stop assuming that the kernel is a simple graph and that $\dseq$ is $2$-free.

Write $\cG_{\dseq}^-$ for the set of all (not necessarily connected) graphs with degree sequence $\dseq$ and no cycle components, and let $\bC^-$ be obtained from $\bC$ by deleting all cycle components. Note that, conditionally given the degree sequence $\dseq^-=(\deg_{\bC^-}(v),v \in V(\bC^-))$ of $\bC^-$, then $\bC^- \in_u \cG_{\dseq^-}^-$, and $\dseq^-$ is a degree sequence with minimum degree $2$. Moreover, $\bK=K(\bC^-)$; so, to prove uniform tail bounds on the diameter of the kernel, it suffices to prove corresponding bounds for kernels $K(\bC)$ of random graphs $\bC \in_u \cG_{\dseq}^-$ that hold uniformly over degree sequences with minimum degree $2$. We do so in Section~\ref{sec:ker_diam}, where we prove the following theorem. 

\begin{thm}\label{thm:ker_diam}
    Fix positive integers $n\leq \ell$ and a 1-free degree sequence $\dseq$ of the form $\dseq=(d_1,\dots,d_\ell)=(d_1,\dots,d_n,2,\dots,2)$ with $d_v\geq 3$ for all $v\in[n]$. 
    Take $\bC\in_u\gmd$ and let $\bK=K(\bC)$. Write $2m=\sum_{v\in[n]}d_v$ so that if $C\in\cG_\dseq^-$, then $K(C)$ has $m$ edges. Then
    \begin{equation*}
        \p{\diam^+(\bK)\geq31\log m}= O(1/m).
    \end{equation*}
\end{thm}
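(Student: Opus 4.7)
The plan is to characterize the law of $\bK = K(\bC)$ by an explicit combinatorial formula and then bound its diameter via a BFS-exploration argument, establishing connectivity of $\bK$ en route.

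First I would describe the law of $\bK$. Each $C \in \gmd$ corresponds uniquely to a pair $(K, \sigma)$, where $K = K(C)$ is a multigraph on $[n]$ with degree sequence $(d_1, \ldots, d_n)$ and $\sigma$ records, for each edge $e \in E(K)$, an ordered sequence of the degree-2 vertices subdividing $e$ (taken modulo reversal when $e$ is a loop), subject to the constraint that the resulting $C$ is simple. Whenever $K$ is itself simple, a direct count gives
\[
|\{C \in \gmd : K(C) = K\}| = h!\binom{h + m - 1}{m - 1}, \qquad h := \ell - n,
\]
independently of $K$, so conditional on $\bK$ being simple, $\bK$ is uniform on simple graphs with degree sequence $(d_1, \ldots, d_n)$. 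For non-simple $K$ the count is strictly smaller, with explicit multiplicative corrections reflecting that each loop requires $\geq 2$ subdivisions and that each class of parallel edges admits at most one unsubdivided member. In particular, the law of $\bK$ can be described as the configuration-model distribution on multigraphs with degree sequence $(d_1, \ldots, d_n)$ reweighted by a factor depending only on the loop/multi-edge pattern of $K$.

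Next I would bound $\diam^+(\bK)$ by BFS exploration, exposing half-edges one at a time in the pairing model via deferred decisions. Because every vertex has degree $\geq 3$, each fresh vertex reveal contributes at least $2$ new frontier half-edges, so the frontier roughly doubles each generation until it comprises $\Omega(m)$ half-edges. Martingale concentration arguments applied to the frontier-size process would yield that from any fixed starting vertex the BFS ball reaches $\Omega(m)$ vertices within $250 \log m$ generations except on an event of probability $O(\log^8 m / m^2)$. Combining this with a parallel BFS from a second vertex, and taking a union bound over the $n \leq m$ starting vertices, gives the claim: the only way two vertices can be at distance $>500 \log m$ is if BFS balls of depth $250 \log m$ from each fail to meet, an event of the required probability.

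The main obstacle is reconciling the BFS analysis (cleanest in the pure configuration model) with the subdivision-based reweighting from the first step, uniformly over degree sequences of potentially unbounded maximum degree. Vertices of very large degree are likely to be incident to many loops and multi-edges in $\bK$, introducing bias near such vertices; on the other hand, their presence also accelerates BFS growth locally, so careful moment estimates on the degrees encountered along the BFS frontier should absorb the reweighting factor. The $\log^8 m$ term in the final probability reflects the polylogarithmic concentration losses in the BFS frontier-growth analysis combined with the factor of $m$ from the union bound over starting vertices.
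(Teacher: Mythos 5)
Your high-level plan — BFS exploration from each vertex, exponential growth of the queue until it is of size $\Omega(m)$, then linking up two explorations and a union bound over starting vertices — does mirror the paper's overall strategy. The issue is the proposed mechanism for controlling the exploration probabilities.

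You propose to identify the law of $\bK$ as a reweighted configuration-model distribution and then run the BFS in the pairing model with deferred decisions, hoping that ``careful moment estimates on the degrees encountered along the BFS frontier\dots absorb the reweighting factor.'' This is exactly the step the paper identifies as fundamentally unavailable in the required generality. In the proof overview of Section~\ref{sec:ker_diam} the authors note that the heuristic ``the match of a given half-edge is roughly uniformly distributed over the other half-edges'' is ``essentially equivalent to the distribution of $\bK$ being well approximated by a sample from the configuration model, and this heuristic is false when $\ell$ is not much larger than $n$ and $\dseq$ contains sufficiently large degrees.'' The reweighting factor you would need to control is not a small perturbation: when very few degree-$2$ vertices are available to subdivide the kernel's multi-edges and loops, the constraint that $C(\bK)$ be simple severely distorts the multigraph distribution away from the configuration model, and there is no uniform moment bound (the theorem must hold for \emph{every} degree sequence with minimum degree $3$ and any number of $2$'s). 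Your own paragraph flags this obstacle, but the suggested resolution is not a proof and, as the paper argues, cannot be made to work in this generality.

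The paper's actual route avoids the configuration model entirely. It introduces \emph{augmented cores} $\cA_\dseq$ (Observation~\ref{obs:A=C}) to carry half-edge labels and internal-path data alongside the core, so that $C(\bA)\in_u\cG_\dseq^-$ when $\bA\in_u\cA_\dseq$, and then replaces the martingale/pairing-model analysis with \emph{path-switching} arguments (Lemma~\ref{lem:switch_gen}, Lemma~\ref{lem:switch}, Lemma~\ref{lem:free}). Each step of the exploration is controlled by a direct comparison of the number of augmented cores in a ``bad'' set to the number in a ``good'' set, via switchings that rewire a kernel edge to a typical location (Lemmas~\ref{lem:back_and_small_B2}, \ref{lem:loop}, \ref{lem:e_2}, \ref{lem:batch_of_5}, \ref{lem:batch_of_b}). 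These bounds are only weak analogues of the uniform-match heuristic — they require intersecting with events like $|B_\bK(K_t,2)|\le m/2$, and exploring edges in batches to get failure probabilities small enough for the union bound — which is why the proof is technically involved and why the final probability is $O(\log^8 m / m)$ rather than something cleaner. To complete your argument you would need to replace your configuration-model martingale analysis with some device that, like switching, works directly on the uniform measure over $\cG_\dseq^-$ (or equivalently $\cA_\dseq$) without comparison to the pairing model.
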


Note that because $\dseq$ is 1-free, graphs $C\in\cG_\dseq^-$ are cores, each of whose components have surplus at least 2. 
Also, observe that Theorem~\ref{thm:ker_diam_main} follows immediately from the case $\ell=n$ of Theorem~\ref{thm:ker_diam}, since in this case $\cG_{\dseq}=\cG_{\dseq}^-$, $\bK=\bC$, and $3n\leq 2m\leq n^2$.

To prove Theorem~\ref{thm:ker_diam}, we consider a breadth-first exploration of the kernel of the random core. At each time, there will be a queue of vertices which have been discovered but whose neighbourhoods still need to be explored. We will show that with high probability, until the queue is extremely large, its size grows exponentially as a function of the radius of the explored subgraph. We then show that if the queues in two explorations started from different vertices become large enough, they must link up with high probability. We prove that all of these events occur with high enough probability that a union bound over pairs of starting locations for the exploration allows us to conclude.

To implement the above argument, we use the technique of {\em switching}, independently introduced in \cite{MR42678,Havel1955} (see also \cite{MR681916,MR384612} for some other early works using the technique), which has found substantial use in the combinatorial analysis of graphs with given degrees, as well as as in developing Markov chains for sampling from random graphs with given degrees (see the ICM article \cite{MR2827981} for a fairly recent overview of the uses of the method). In its simplest incarnation, the technique works as follows. Let $G$ be a graph and let $u,v,x,y \in V(G)$ be such that $\{u,v\},\{x,y\} \in E(G)$ and $\{u,x\},\{v,y\}\not\in E(G)$. Then {\em switching on $uv$ and $xy$} consists of removing the edges $\{u,v\},\{x,y\}$ and adding the edges $\{u,x\},\{v,y\}$. Note that in the resulting graph $G'$, all vertices have the same degrees as they do in $G$. 

As a very simple example of how switching can be used to prove probability bounds, consider the all-$3$'s degree sequence $\dseq=(3,\ldots,3)$ of length $n$. Let $\cA=\{G \in \cG_{\dseq}: G|_{[4]}\mbox{ is a clique}\}$, and let $\cB\subset \cG_{\dseq}\setminus \cA$ be the set of graphs $G'$ that can be obtained from $A$ by a switching that uses the edge $12$. For each graph in $G\in \cA$, there are exactly $3n/2-6$ such switchings. On the other hand, for each graph $G' \in \cB$, there is exactly one switching which results in a graph in $\cA$; so $|\cB| \ge (3n/2-6)|\cA|$. It follows that if $\bG \in_u \cG_{\dseq}$ then 
\begin{align*}
\p{\bG|_{[4]}\mbox{ is a clique}}
 =
\p{\bG\in \cA} 
 = \frac{|\cA|}{|\cG_\dseq|} 
 \le \frac{|\cA|}{|\cA\cup\cB|} 
\le \frac{1}{3n/2-5}\, .
\end{align*}
Similarly, let $\cB'$ be the set of graphs $G''$ that can be obtained from a graph $G$ in $\cA$ by first switching on $12$ and $ab$, then on $13$ and $cd$, where $a,b,c,d \in \{5,\ldots,n\}$ and $\{a,b\},\{c,d\} \in E(G)$ are non-incident edges. Then the same logic as above shows that $\cB' \ge (3n/2-6)(3n/2-11)|\cA|$, so in fact 
$\p{\bG|_{[4]}\mbox{ is a clique}} = O(1/n^2)$. 

As the above example illustrates, switching allows us to compare the probabilities of two events by counting switchings between the corresponding sets of graphs. This is powerful because switching is a local operation, so counting switchings between sets of graphs is frequently tractable in settings where directly counting the sizes of the sets is not. 

The version of switching we use is called \emph{path switching}, and was introduced with a slightly different formalism in \cite{Joos_2017}. Path switching reroutes pairs of core paths between kernel vertices, rather than just pairs of edges. We postpone further discussion to Section~\ref{sec:ker_diam}, where the method is set up in detail.

To summarize, we have the following bounds:  the forest $\bF=F(\bG)$, which is essentially spanned by the set of vertices not lying in cycles, has height $O(\sqrt{n})$ in expectation and has sub-Gaussian tails on this order; the greatest length of a path in the core $\bC=C(\bG)$ between adjacent kernel vertices is $O((1+\sqrt{n/m})\log m)=O(\sqrt{n})$ in expectation, where $m\leq {n\choose 2}$ is the number of kernel edges, and has sub-exponential tails on this order; and the diameter of the kernel $\bK=K(\bG)$ is $O(\log m)$ except with probability $O(1/m)$. Combining these bounds with the inequality \eqref{eq:diamg_bd}, and being careful with the contributions to the expectation from small-probability events, is enough to conclude. 

At this point, it seems like a good idea to turn to the details.

\section{The simple kernel}\label{sec:simple_kernel}

Loops and multi-edges complicate the analysis of kernels because they have symmetries which must be accounted for (permuting parallel edges and reversing loops) and because there are constraints on the number of core vertices which must lie on a given kernel edge (loops must be subdivided at least twice and at most one copy of any edge can be non-subdivided). To deal with both problems, we augment the kernel with some additional data by ``pre-subdividing" some of the edges, which breaks these symmetries and accounts for these constraints.

Fix a graph $G$, let $C=C(G)$ and let $K=K(G)$. 
The \emph{simple kernel} of $G$, denoted $S(G)$, is the subdivision of $K$ constructed from $C$ by replacing $C(e)$ for each $e\in E(K)$ with a short path or cycle according to the following rules.
If $e$ is a loop, then replace the subpath of $C(e)$ between the two neighbours of $e^-=e^+$ with a single edge $e^*$. 
If $e$ has distinct endpoints and $\mult(e)=1$, then replace $C(e)$ with a single edge $e^*$. 
If $e$ has distinct endpoints, $\mult(e)\geq 2$, and $|C(e)|\geq 1$, then replace the subpath of $C(e)$ between $e^-$ and the neighbour of $e^+$ on $C(e)$ with a single edge $e^*$. 
In the only remaining case, which is that $e$ has distinct endpoints, $\mult(e)\geq 2$, and $|C(e)|=0$, we leave $C(e)$ unchanged; in this case $e^*$ is not defined. Note that if $G$ has connected components $G_1,\dots,G_k$, then $S(G)$ has connected components $S(G_1),\dots,S(G_k)$.

We call the edges $e^*$ \emph{mutable} and the remaining edges \emph{immutable}. Let $M(S)$ be the set of all mutable edges, and set $m(S)=|M(S)|$. For $e\in M(S)$, we write $C(e)$ for the path in $C$ replaced by $e$, and write $|C(e)|$ for the number of internal vertices on $C(e)$. When reconstructing $C$ from $S$, the mutable edges may be subdivided, but the immutable edges must not be. (Formally, we might take $S=(V(S),E(S),M(S))$, in order to encode which edges are mutable as part of the data of $S$; but we never need this notation in the proofs themselves.)

We call $S$ the simple kernel because it is simple and homeomorphic to $K$ with each edge subdivided at most twice. In particular, $e(K)\leq e(S)\leq 3e(K)$. Notice that for all $e\in E(K)$, there is exactly one mutable edge on $S(e)$ unless $\mult(e)\geq 2$ and $|C(e)|=0$. Because only one edge $f\in E(K)$ parallel to $e$ can have $|C(f)|=0$, this implies $e(K)/2\leq m(S)\leq e(K)$ and $e(S)/3\leq m(S)\leq e(S)$. We also have $\diam(K)\leq\diam(S)\leq 2\diam(K)+2$ because in $S$, every vertex has a neighbour in $K$, and for every pair of vertices $u,v\in V(K)$ that are adjacent in $K$, there is a path of length at most $2$ between $u$ and $v$ in $S$.

Fix a degree sequence $\dseq$, and denote by $\grd^=$ the set of graphs in $\grd$ with no cycle components and no tree components (the superscript should be read as ``minus-minus" rather than ``equals"). Let $S$ be a simple kernel with $m\geq 1$ mutable edges and with $V(S) \subset [n]$. We now describe a bijective encoding of the set $\grd^=(S)=\{G\in\grd^=:S(G)=S\}$. Let $\cseq=(c_0,\dots,c_n)$ be the child sequence given by 
\begin{equation*}
    c_v=
    \begin{cases}
        0 & v=0\\
        d_v-1 & v\in[n]\setminus V(S)\\
        d_v-\deg_S(v) & v\in V(S).
    \end{cases}
\end{equation*}
Recall that $\cseq_{A}$ denotes the restriction of $c$ to set $A$. We shall describe an injection from $\grd^=(S)$ into the set
\begin{equation*}
    \cX_\dseq^+(S) \coloneq \bigcup_{(A,B)}\cF_{\cseq_{A},V(S)}\times\{(T,P):T\in\cT_{\cseq_{B}},P\in\cP_{m,\h_T(0)}\},
\end{equation*}
where the union is over partitions $(A,B)$ of $[0,n]\coloneq\{0,\dots,n\}$ with $V(S)\subset A$ and $0\in B$. 

For $G \in \grd^=(S)$, construct $(F,T,P)\in\cX_\dseq^+(S)$ as follows. Deleting all edges of $C=C(G)$ yields a forest in which each tree contains at most one vertex in $V(S)$. The rooted forest $F$ is obtained by retaining only those resulting trees with a vertex in $V(S)$, and rooting each such tree at its unique vertex in $V(S)$. The $m$-composition $P=(P_1,\dots,P_m)$ is given by $P_i=|C(e_i)|$ for $i\in[m]$, where $e_1,\dots,e_m$ are the mutable edges of $S$ listed in lexicographic order. (Recall that $|C(e_i)|$ denotes the number of internal vertices of $C(e_i)$.) The tree $T$ is built from smaller trees $T_1,\dots,T_m$ which we describe first. For $i\in[m]$, deleting the edges in $C(e_i)$ yields $|C(e_i)|$ new tree components, each of which intersects $C(e_i)$ at a different internal vertex. Then $T_i$ is the union of the path $C(e_i)$ with all of these trees, so $e_i^-$ and $e_i^+$ are leaves in $T_i$. For $i\in[m-1]$, join the neighbour of $e_i^+$ in $T_i$ to the neighbour of $e_{i+1}^-$ in $T_{i+1}$ by an edge, then delete $e_i^+$ and $e_{i+1}^-$. In the resulting tree, $e_1^-$ and $e_m^+$ are leaves. By rooting this tree at the neighbour of $e_1^-$ and deleting $e_1^-$, then relabeling $e_m^+$ by $0$, we obtain $T$.

The preceding construction defines an injection $\grd^=(S)\to \cX_\dseq^+(S)$, and thus a bijection $\grd^=(S)\to \cX_\dseq(S)$, where $\cX_\dseq(S)\subset \cX_\dseq^+(S)$ is the image of this injection. The image $\cX_\dseq(S)$ can be explicitly described as the set of all $(F,T,P)\in\cX_\dseq^+(S)$ satisfying the following constraint for each triangle component of $S$, say with vertices $u,v,w$ and mutable edge $\{u,v\}$: writing $T_u,T_v,T_w$ for the trees in $F$ rooted at $u,v,w$ respectively, the least labeled leaf in $V(T_u)\cup V(T_v)\cup V(T_w)$ must lie in $V(T_w)$.

Due to this bijection, a random graph $\bG\in_u\grd^=(S)$, can be encoded by a random triple $(\bF,\bT,\bP)\in_u\cX_\dseq(S)$. Under this bijection, writing $\bC=C(\bG)$, the number of internal vertices on $\bC(e_i)$ is given by $\bP_i$. In particular, the total number of vertices in $\bC$ but outside the simple kernel is equal to $\sum_{i=1}^m\bP_i=\h_\bT(0)$. This implies the following proposition.

\begin{prop}\label{prop:bij}
    Let $\dseq=(d_1,\dots,d_n)$ be a degree sequence and let $S$ be a simple kernel with $m\geq 1$ mutable edges $e_1,\dots,e_m$ listed in lexicographic order. Let $(\bF,\bT,\bP)\in_u\cX_\dseq(S)$, let $\bG\in_u\grd^=(S)$, and write $\bC=C(\bG)$. Then
    \begin{align*}
        (|\bC(e_1)|,\dots,|\bC(e_m)|)&\eqdist(\bP_1,\dots,\bP_m)\text{, and hence}\\
        v(\bC)-v(S)&\eqdist\h_\bT(0).
    \end{align*}
\end{prop}

For any $B\subset[0,n]\setminus V(S)$ with $0\in B$, conditional on $V(\bT)=B$, we have $(\bT,\bP)\in_u\{(T,P)\mid T\in\cT_{\cseq_{B}}, P\in \cP_{m,\h_T(0)}\}$. It follows that conditional on $\bT=T$ for a tree $T$ with $\h_T(0)=h$, then $\bP\in_u\cP_{m,h}$. Moreover $0$ is a leaf in $\cseq_{B}$, and if $\dseq_{[n]\setminus V(S)}$ is 2-free then $\cseq_{B}$ is 1-free. Therefore if we can prove a tail bound on $\h_\bS(0)$ where $(\bS,\bQ)\in_u\{(S,Q)\mid S\in\cT_\cseq, Q\in\cP_{m,\h_S(0)}\}$ which is uniform over all $1$-free child sequences $\cseq=(c_0,\dots,c_n)$ with $c_0=0$ and which is increasing in $n$, then it will apply without change to $v(\bC)-v(S)$ provided that $d_v\neq 2$ for all $v\in[n]\setminus V(S)$. In order to understand $v(\bC)$, like in the proof outline, it therefore suffices to study the height of $0$ in a random rooted tree with a given 1-free child sequence $\cseq=(c_0,\dots,c_n)$ such that $c_0=0$, where the probability of selecting a given tree $T\in\cT_\cseq$ is proportional to the number of $m$-compositions of $\h_T(0)$.

\section{A tail bound for composition-biased trees}

\subsection{Stochastic domination}\label{sec:st_dom}
This subsection is devoted to the proof of Theorem~\ref{thm:st_dom}. 
For a random variable $\bX$ and an event $E$ with $\p{E}>0$, we use the notation $\bY\stackrel{d}{=}(\bX\mid E)$ to mean that $\p{\bY\in B}=\p{\bX\in B\mid E}$ for every measurable set $B$. Whenever we refer to a random variable of the form $(\bX\mid E)$, we implicitly assume that $\p{E}>0$. The intermediate results we need for proving Theorem~\ref{thm:st_dom} make repeated use of the following easily verified pair of stochastic inequalities.

\begin{lem}\label{lem:easy}
    For any random variable $\bX$ and any $x\leq y$, we have both $(\bX\mid\bX\geq x)\st(\bX\mid \bX\geq y)$ and $(\bX\mid\bX\leq x)\st(\bX\mid \bX\leq y)$.
\end{lem}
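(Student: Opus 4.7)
The plan is to unfold the definition of stochastic domination, which reduces each inequality to the pointwise statement that $\p{\bX>t\mid\bX\geq a}\leq\p{\bX>t\mid\bX\geq b}$, or its analogue, for every $t\in\R$.

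For the first inequality, the main idea is a mixture decomposition. Setting $p=\p{a\leq\bX<b}$ and $q=\p{\bX\geq b}$, so that $p+q=\p{\bX\geq a}>0$, one has
\begin{equation*}
\p{\bX\in\cdot\mid\bX\geq a}=\tfrac{p}{p+q}\,\p{\bX\in\cdot\mid a\leq\bX<b}+\tfrac{q}{p+q}\,\p{\bX\in\cdot\mid\bX\geq b}.
\end{equation*}
The first summand is supported in $[a,b)$ while the second is supported in $[b,\infty)$, so the first is stochastically dominated by the second. Expressing $(\bX\mid\bX\geq a)$ as a convex combination of two laws each dominated by $(\bX\mid\bX\geq b)$ yields the claim, using the trivial fact that a mixture of laws each dominated by $\bZ$ is itself dominated by $\bZ$. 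The edge case $p=0$ is immediate since then the two conditional laws coincide.

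For the second inequality I would apply the symmetric decomposition, writing
\begin{equation*}
\p{\bX\in\cdot\mid\bX\leq b}=\tfrac{\p{\bX\leq a}}{\p{\bX\leq b}}\,\p{\bX\in\cdot\mid\bX\leq a}+\tfrac{\p{a<\bX\leq b}}{\p{\bX\leq b}}\,\p{\bX\in\cdot\mid a<\bX\leq b}.
\end{equation*}
The second summand is supported in $(a,b]$ and hence dominates $(\bX\mid\bX\leq a)$, while the first is literally $(\bX\mid\bX\leq a)$; so $(\bX\mid\bX\leq b)$ is a mixture of two laws each dominating $(\bX\mid\bX\leq a)$, giving the desired domination.

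There is no substantive obstacle: the lemma is a routine mixture argument, recorded separately only to streamline later comparisons of conditional distributions. The only bookkeeping point is to avoid conditioning on null events, which is covered by the paper's convention that $(\bX\mid E)$ presupposes $\p{E}>0$.
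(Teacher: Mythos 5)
The paper records Lemma~\ref{lem:easy} without proof, labeling it an ``easily verified fact,'' so there is no official argument to compare against. Your mixture proof is correct: the decomposition is valid under the paper's standing convention that conditioning events have positive probability, a law supported on $[a,b)$ is indeed dominated by one supported on $[b,\infty)$, and a convex combination of laws each dominated by (respectively dominating) a fixed law inherits that relation. A marginally more direct route is to compare tail probabilities pointwise: for $t\geq b$,
\begin{equation*}
\p{\bX>t\mid\bX\geq a}=\frac{\p{\bX>t}}{\p{\bX\geq a}}\leq\frac{\p{\bX>t}}{\p{\bX\geq b}}=\p{\bX>t\mid\bX\geq b},
\end{equation*}
while for $t<b$ the right-hand side equals $1$; the second claim is symmetric. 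Either argument is fine, and yours has the virtue of making transparent the structural reason the inequality holds.
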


We start by proving the Lemma~\ref{lem:sameX}, below, using the Markov chain argument mentioned in the proof outline.

\begin{lem}\label{lem:sameX}
    Let $\bU$, $\bV$, and $\bW$ be discrete random variables such that $\bW$ is independent of $\bU$ and $\bV$. Suppose that for all $w$, $(\bU\mid\bU\geq w)\st(\bV\mid\bV\geq w)$. Then
    \begin{equation*}
        (\bU\mid\bU\geq\bW)\st(\bV\mid\bV\geq\bW).
    \end{equation*}
\end{lem}

\begin{proof}
    Choose $w_0\leq u_0\leq v_0$ in the supports of $\bW,\bU,\bV$ respectively. Consider the Markov chain $((\bU_t,\bW_t),t\geq 0)$ started from $(\bU_0,\bW_0)=(u_0,w_0)$ with transitions given by: for $t\geq 1$, and for $u\geq w$,
    \begin{align*}
        (\bU_t\mid\bW_{t-1}=w) &\eqdist (\bU\mid\bU\geq w)\text{, and}\\
        (\bW_t\mid\bU_t=u) &\eqdist (\bW\mid\bW\leq u).
    \end{align*}
    To fully specify the transition, let $\bU_t$ be conditionally independent of $\bU_{t-1}$ given that $\bW_{t-1}=w$, and let $\bW_t$ be conditionally independent of $\bW_{t-1}$ given that $\bU_t=u$. This chain is irreducible and aperiodic. A routine calculation using the independence of $\bU$ and $\bW$ shows that its stationary measure, and therefore its limit in total variation distance, is $((\bU,\bW)\mid\bU\geq\bW)$. The analogous chain $((\bV_t,\bW_t'),t\geq 0)$ started at $(\bV_0,\bW_0')=(v_0,w_0)$ converges in total variation distance to its stationary distribution $((\bV,\bW)\mid\bV\geq\bW)$. 

    We will inductively construct a coupling of the two chains such that almost surely $\bU_t\leq\bV_t$ and $\bW_t\leq\bW_t'$ for all $t\geq 0$. The base case is automatic since $\bU_0=u_0\leq v_0=\bV_0$ and $\bW_0=w_0=\bW_0'$. Suppose then that $\bW_{t-1}$ and $\bW_{t-1}'$ are coupled so that $\bW_{t-1}\leq\bW_{t-1}'$ almost surely. For any $w\leq w'$, by Lemma~\ref{lem:easy} and our assumption,
    \begin{align*}
        (\bU_t\mid\bW_{t-1}=w) \eqdist (\bU\mid\bU\geq w)
        \st (\bU\mid\bU\geq w')
        \st (\bV\mid\bV\geq w')
        \eqdist (\bV_t\mid\bW_{t-1}'=w').
    \end{align*}
    Since $\bW_{t-1}\leq\bW_{t-1}'$ almost surely, it follows that $\bU_t\st\bV_t$, so there is a coupling $(\bU_t,\bV_t)$ with $\bU_t\leq\bV_t$ almost surely. It remains to couple $\bW_t$ and $\bW_t'$. For any $u\leq v$, using Lemma~\ref{lem:easy},
    \begin{align*}
        (\bW_t\mid\bU_t=u) \eqdist (\bW\mid\bW\leq u)
        \st (\bW\mid\bW\leq v)
        \eqdist (\bW_t'\mid\bV_t=v).
    \end{align*}
    Since $\bV_t\leq\bU_t$ almost surely, it follows that there is a coupling $(\bW_t,\bW_t')$ with $\bW_t\leq \bW_t'$ almost surely. 

    Let $\widehat\bU\eqdist(\bU\mid\bU\geq\bW)$ and $\widehat\bV\eqdist(\bV\mid\bV\geq\bW)$, so $\bU_t\to\widehat\bU$ and $\bV_t\to\widehat\bV$ in total variation distance. Then the coupled sequence $((\bU_t,\bV_t),t\geq 0)$ yields a coupling $(\widehat\bU,\widehat\bV)$ of the limits, and because $\bU_t\leq\bV_t$ for all $t\geq 0$ almost surely, $\widehat\bU\leq\widehat\bV$ almost surely.
\end{proof}

With two more applications of Lemma~\ref{lem:easy}, we can promote Lemma~\ref{lem:sameX} to the following corollary, which contains Lemma~\ref{lem:sameX} as the special case where $\bX_1=\bX_2$.

\begin{cor}\label{cor:general}
    For $i\in\{1,2\}$, let $\bX_i$ and $\bY_i$ be independent discrete random variables. Suppose that $(\bX_{1}\mid \bX_{1}\leq x) \st (\bX_{2}\mid \bX_{2}\leq x)$ and $(\bY_{1}\mid\bY_{1}\geq y) \st (\bY_{2}\mid\bY_{2}\geq y)$ for all $x$ and $y$. Then
    \begin{equation*}
    (\bY_{1}\mid\bY_{1}\geq\bX_{1})\st(\bY_{2}\mid\bY_{2}\geq \bX_{2}).
    \end{equation*}
\end{cor}

\begin{proof}
We may assume that $\bX_1$, $\bY_1$, $\bX_2$, $\bY_2$ share a common probability space in which $\bX_1$ and $\bY_2$ are independent. By Lemma~\ref{lem:sameX} with $\bU=\bY_1$, $\bV=\bY_2$, and $\bW=\bX_1$,
\begin{align*}
    (\bY_1\mid\bY_1\geq\bX_1)\st(\bY_2\mid\bY_2\geq \bX_1).
\end{align*}
By Lemma~\ref{lem:sameX} with $\bU=-\bX_2$, $\bV=-\bX_1$, and $\bW=-\bY_2$, there is a coupling $(\widehat\bX_1,\widetilde\bX_2)$ of $(\bX_1\mid\bX_1\leq\bY_2)$ and $(\bX_2\mid\bX_2\leq\bY_2)$ such that $\widehat\bX_1\leq\widetilde\bX_2$ almost surely. Now for each $x\leq y$, by Lemma~\ref{lem:easy}, there is a coupling of $(\bY_2\mid\bY_2\geq x)$ and $(\bY_2\mid\bY_2\geq y)$ such that the former is at most the latter almost surely. Obtain $(\widehat\bY_2,\widetilde\bY_2)$ by first sampling $(\widehat\bX_1,\widetilde\bX_2)$, and then given that $(\widehat\bX_1,\widetilde\bX_2)=(x,y)$, sampling $(\widehat\bY_2,\widetilde\bY_2)$ according the coupling of $(\bY_2\mid\bY_2\geq x)$ and $(\bY_2\mid\bY_2\geq y)$. Then $\widehat\bY_2\leq \widetilde\bY_2$ almost surely and it is straightforward to check that $\widehat\bY_2\eqdist(\bY_2\mid\bY_2\geq\bX_1)$ and $\widetilde\bY_2\eqdist(\bY_2\mid\bY_2\geq\bX_2)$. We have thus shown
\begin{equation*}
    (\bY_1\mid\bY_1\geq\bX_1)\st(\bY_2\mid\bY_2\geq \bX_1)\st (\bY_2\mid\bY_2\geq \bX_2).\qedhere
\end{equation*}
\end{proof}

We now turn to realizing $\h_\bT(0)$ and $\h_\bS(0)$ as random variables of the form $(\bY_{1}\mid \bY_{1}\geq \bX_{1})$ and $(\bY_{2}\mid \bY_{2}\geq \bX_{2})$ for some choice of $\bX_{1},\bX_{2},\bY_{1},\bY_{2}$ satisfying the hypotheses of Corollary~\ref{cor:general}. For this we will require a bijection due to Foata and Fuchs first introduced in \cite{MR260623}, which we refer to as the (discrete) line-breaking construction and describe shortly. The present work adds to the list of probabilistic applications of the discrete line-breaking construction, also called the Foata-Fuchs bijection. For example, the proof of Theorem~\ref{thm:ad24} (\cite[Theorem 6]{addarioberry2024random}) relies heavily on it, and the article \cite{addarioberry2022foatafuchsproofcayleysformula} gives several variations and consequences of it. Given a child sequence $\cseq=(c_0,\dots,c_n)$ with $\sum_{v\in[0,n]}c_v=n$, let $\cV_\cseq$ denote the set of all sequences $V=(V_1,\dots,V_n)$ with alphabet $[0,n]$ such that $|\{i\in[n]:V_i=v\}|=c_v$ for all $v\in[0,n]$. 

As it is somewhat easier to describe, we will actually provide the inverse of the line breaking construction, which is the map $\cT_\cseq\to \cV_\cseq$ that constructs a sequence $V=(V_1,\dots,V_n)\in\cV_\cseq$ from a rooted tree $T\in\cT_\cseq$ in the following way. List the leaves of $T$ in increasing order of label as $\ell_1<\dots<\ell_k$. Define a list $(S_i,i\in[0,k])$ of growing subsets of $[0,n]$ and a list of sequences of vertices $(P_i,i\in[k])$ as follows, starting with $S_0=\{\rho(T)\}$. For $i\in[k]$, let $P_i$ be the sequence of vertices on the shortest $S_{i-1}$-to-$\ell_i$ path in $T$, excluding the final vertex $\ell_i$ but including the starting vertex in $S_{i-1}$. Let $S_i$ be the union of $S_{i-1}$ and the vertices in $P_i$. Let $V$ be the concatenation of $P_1,\dots,P_k$.

It is not hard to check that the resulting sequence $V$ lies in $\cV_\cseq$, nor is it hard to construct the inverse map $\cV_\cseq\to\cT_\cseq$ (both of which are done in \cite{addarioberry2022foatafuchsproofcayleysformula}). Thus, the line-breaking construction is a well-defined bijection which encodes trees with a given child sequence as sequences with specified multiplicities. For us, the salient property of this bijection is that if $T\in\cT_\cseq$ maps to $V=(V_1,\dots,V_n)\in\cT_\cseq$, then the first repetition in $V$ encodes the height of the leaf $\ell_1$ of minimum label. Specifically, $\h_T(\ell_1)=r(V)-1$, where $r(V)=\min(i\in[n]:V_i\in\{V_1,\dots,V_{i-1}\})$. For the rest of this section, we will work with child sequences $\cseq=(c_0,\dots,c_n)$ such that $c_0=0$, which implies that $\ell_1=0$. Thus, if $\bT\in_u\cT_\cseq$ and $\bV\in_u\cV_\cseq$ then $\h_\bT(0)\eqdist r(\bV)-1$.

It is an easy exercise to deduce from this identity in law the following formulae for the probability mass function of $\h_\bT(0)$ when $\bT$ is either a uniform or composition-biased binary tree. We use the falling factorial notation $(x)_{k}=x(x-1)\cdots(x-k+1)$, so for fixed $m$, as $h$ varies, $|\cP_{m,h}|={h+m-1\choose m-1}\propto(h+m-1)_{m-1}$.

\begin{lem}\label{lem:pmf}
    Let $m$ be a positive integer, and let $\bseq=(b_0,\dots,b_n)$ be a binary child sequence with $b_0=0$. Let $\bS\in_u\cT_\bseq$ and let $(\bT,\bP)\in_u\{(T,P):T\in\cT_\bseq,P\in\cP_{m,\h_T(0)}\}$. Then for all $h\in [n/2]$, 
    \begin{align*}
        \p{\h_\bS(0)=h} &= \frac{h}{n-h}\prod_{i=1}^{h-1}\left(1-\frac{i}{n-i}\right)\text{, and hence}\\
        \p{\h_\bT(0)=h} &\propto (h+m-1)_{m-1}\frac{h}{n-h}\prod_{i=1}^{h-1}\left(1-\frac{i}{n-i}\right).
    \end{align*}
\end{lem}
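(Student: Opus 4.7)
The plan is to deduce both formulae from the Foata--Fuchs line-breaking bijection, which is recalled just above the lemma: it identifies $\cT_\bseq$ with $\cV_\bseq$ in such a way that if $b_0=0$, then $0=\ell_1$ is the minimum-label leaf and $\h_\bT(0) = r(\bV)-1$, where $r(\bV)$ is the index of the first repetition in $\bV$. Since $\bS \in_u \cT_\bseq$ corresponds to $\bV \in_u \cV_\bseq$, it suffices to compute the law of $r(\bV)$, where $\bV$ is a uniformly random arrangement of the multiset in which each vertex $v$ appears $b_v$ times. Because $\bseq$ is binary and a valid child sequence with $b_0=0$, this multiset contains each of the (say) $t$ internal vertices exactly twice and no leaves; in particular the total length of $\bV$ is $n=2t$.

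For the first formula, I would compute $\p{r(\bV)>h}$ sequentially by thinking of $\bV$ as an ordered sample without replacement from the multiset of $n$ tokens (two tokens per internal vertex). Conditionally on the first $i-1$ tokens coming from distinct vertices, exactly $i-1$ of the remaining $n-(i-1)$ tokens are ``second copies'' of already-used vertices, so the probability that the $i$-th token is fresh equals $1-(i-1)/(n-(i-1))$. Multiplying gives
\begin{equation*}
\p{V_1,\dots,V_h\text{ distinct}} \;=\; \prod_{i=1}^{h-1}\Big(1-\tfrac{i}{n-i}\Big),
\end{equation*}
and conditionally on this event, the $(h+1)$-st token lands on a previously used vertex with probability $h/(n-h)$. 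Multiplying yields $\p{r(\bV)=h+1}$, and then the identity $\h_\bS(0)=r(\bV)-1$ gives the first formula.

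For the second formula I would use the obvious identity for a size-biased sample: since $(\bT,\bP)$ is uniform on $\{(T,P):T\in\cT_\bseq,\,P\in\cP_{m,\h_T(0)}\}$, marginalizing out $\bP$ gives $\p{\bT=T}\propto|\cP_{m,\h_T(0)}|$, and hence
\begin{equation*}
\p{\h_\bT(0)=h} \;\propto\; |\cP_{m,h}|\cdot \p{\h_\bS(0)=h}.
\end{equation*}
The identity $|\cP_{m,h}|=\binom{h+m-1}{m-1}=(h+m-1)_{m-1}/(m-1)!$ makes the proportionality constant disappear (for fixed $m$ as $h$ varies), and plugging in the first formula yields the claimed expression.

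There is no real obstacle here: the only care required is to verify the bookkeeping for the binary-with-$b_0=0$ multiset (so that $0$ is indeed the minimum-label leaf and the line-breaking bijection produces a sequence of the right length with each internal vertex repeated twice), after which the argument is a short urn computation followed by a size-biasing observation.
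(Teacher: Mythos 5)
Your proposal is correct and matches what the paper intends: the paper marks Lemma~\ref{lem:pmf} as ``an easy exercise'' to be deduced from the line-breaking construction and Lemma~\ref{lem:count}, and your proof is precisely that exercise — the urn computation for $\p{r(\bV)=h+1}$ under uniform $\bV\in_u\cV_\bseq$, combined with the size-biasing observation that $\p{\bT=T}\propto|\cP_{m,\h_T(0)}|$. The bookkeeping you flag (that $\bV$ has length $n$ and the binary multiset has $n/2$ internal vertices each appearing twice) checks out, and the derivation is sound.
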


We will take $\bY_1=r(\bV)$ and $\bY_2=r(\bW)$ where $\bV\in_u\cV_\cseq$ and $\bW\in_u\cV_\bseq$ for some 1-free child sequence $\cseq$ and some binary child sequence $\bseq$ such that $c_0=b_0=0$. This way, for $i\in\{1,2\}$, $\bY_i$ encodes the height of $0$ in a random tree. Soon we will see how to choose $\bX_i$ so that conditioning on $\bY_i\geq\bX_i$ precisely accounts for the fact that the trees in Theorem~\ref{thm:st_dom} are composition-biased. Before describing $\bX_i$ though, we verify that the hypotheses of Corollary~\ref{cor:general} hold with these choices of $\bY_i$. Recall that by the parity of $n$ we mean the remainder of $n$ modulo $2$.

\begin{lem}\label{lem:pointwise}
    Fix a positive integer $n$ and let $\pi$ be the parity of $n$. Let $\cseq=(c_0,\dots,c_n)$ be a $1$-free child sequence and let $\bseq=(b_0,\dots,b_{n+\pi})$ be a binary child sequence. Take $\bV\in_u\cV_\cseq$ and $\bW\in_u\cV_\bseq$. Then for all $y$,
    \begin{equation*}
        (r(\bV)\mid r(\bV)\geq y)\st(r(\bW)\mid r(\bW)\geq y).
    \end{equation*}
\end{lem}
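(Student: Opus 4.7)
The plan is to reformulate the stochastic domination as a pointwise comparison of ratios of survival probabilities, then use Lemma~\ref{lem:count} together with Newton's inequalities to reduce to an elementary inequality that follows from the 1-free assumption. The family of conditional dominations $(r(\bV)\mid r(\bV) \geq y) \st (r(\bW)\mid r(\bW) \geq y)$ for all $y$ is equivalent to
\begin{equation*}
\frac{\p{r(\bV) > h}}{\p{r(\bV) > h-1}} \;\le\; \frac{\p{r(\bW) > h}}{\p{r(\bW) > h-1}} \quad \text{for every } h \geq 1;
\end{equation*}
one direction follows by setting $y = h$, the other by telescoping the ratios from $y$ up to $h$.

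Writing $I = \{v : c_v \neq 0\}$, $k = |I|$, and $N = n + \varepsilon$, Lemma~\ref{lem:count} gives $\p{r(\bV) > h} = e_h(c_I)/\binom{n}{h}$ and $\p{r(\bW) > h} = 2^h \binom{N/2}{h}/\binom{N}{h}$, where $e_h$ denotes the $h$-th elementary symmetric polynomial and $c_I = (c_v)_{v \in I}$. Taking ratios of consecutive values and shifting $h \to h + 1$ converts the condition above into the target inequality
\begin{equation*}
\frac{e_{h+1}(c_I)}{e_h(c_I)} \;\le\; \frac{(n-h)(N-2h)}{(h+1)(N-h)}, \qquad (\star)
\end{equation*}
required for every $h \geq 0$ with $e_h(c_I) > 0$.

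To prove $(\star)$ I would chain three inequalities. First, Newton's inequalities for the positive reals $(c_v)_{v \in I}$ imply that $\frac{e_{h+1}}{e_h}\cdot\frac{h+1}{k-h}$ is non-increasing in $h$; evaluating at $h = 0$ yields $n/k$, so $\frac{e_{h+1}(c_I)}{e_h(c_I)} \leq \frac{n(k-h)}{k(h+1)}$. Second, since $\cseq$ is 1-free, $c_v \geq 2$ for $v \in I$, whence $n = \sum_{v \in I} c_v \geq 2k$; a short manipulation then gives $\frac{n(k-h)}{k(h+1)} \leq \frac{n-2h}{h+1}$. Third, $N \geq n$ gives $\frac{n-2h}{h+1} \leq \frac{(n-h)(N-2h)}{(h+1)(N-h)}$ via the elementary rearrangement $h(n-N) \leq 0$.

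The main obstacle is identifying the correct chain of inequalities. Newton's inequality alone is not strong enough to reach the target, and one must combine it with the two elementary reductions using $2k \leq n$ and $N \geq n$. The 1-free hypothesis enters crucially through the first of these reductions, while the parity condition baked into $N = n + \varepsilon$ plays no further role beyond ensuring $N \geq n$.
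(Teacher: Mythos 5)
Your proposal is correct and follows the same overall architecture as the paper's proof: both reduce the family of conditional dominations to a ratio inequality for consecutive survival probabilities, translate via Lemma~\ref{lem:count} into a statement about elementary symmetric polynomials $e_h(c_I)$, and ultimately need the bound $(h+1)\,e_{h+1}(c_I)\le (n-2h)\,e_h(c_I)$, which is then relaxed to the $\varepsilon$-adjusted target using $N\ge n$. Where you diverge is in how that core inequality is established. The paper proves it by a direct double-count: it rewrites $h\,e_h$ as $\sum_{B\in{I\choose h-1}}\bigl(\sum_{v\in I\setminus B}c_v\bigr)\prod_{u\in B}c_u$ and uses $c_v\ge 2$ to bound $\sum_{v\in I\setminus B}c_v\le n-2(h-1)$. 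You instead invoke Newton's inequalities for the normalized symmetric means of $(c_v)_{v\in I}$ to get $e_{h+1}/e_h\le n(k-h)/(k(h+1))$, and then feed in $n\ge 2k$ (again the $1$-free hypothesis) to reach the same bound. Both approaches use the hypothesis $c_v\ge 2$ exactly once and arrive at the same intermediate inequality, so this is a modest but genuine variant: yours outsources the log-concavity of $\bigl(e_h/\binom{k}{h}\bigr)_h$ to a classical citation, whereas the paper's version is fully self-contained and about the same length. If you write this up, make explicit the boundary handling you allude to — namely that $e_{h+1}=0$ when $h\ge k$, and that on the surviving range $0\le h\le k-1$ the quantities $n-2h$, $N-2h$, and $N-h$ are all positive — so the chained rearrangements, especially $(n-2h)(N-h)\le(n-h)(N-2h)$, are manifestly sign-safe.
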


\begin{proof}
    We need to prove that for all positive integers $y$ and $h$,
    \begin{equation}\label{eq:need}
        \p{r(\bV)\geq h\mid r(\bV)\geq y}\leq \p{r(\bW)\geq h\mid r(\bW)\geq y}.
    \end{equation}
    When $h\leq y$, both probabilities are equal to 1, so we can assume $h\geq y+1$. Consider a sequence $(V_1,\dots,V_{i-1})$ with no repetitions such that $\p{(\bV_1,\dots,\bV_{i-1})=(V_1,\dots,V_{i-1})}>0$. Conditionally given that $(\bV_1,\dots,\bV_{i-1})=(V_1,\dots,V_{i-1})$, $\bV_{i}$ is uniform on the multiset containing $c_v-1$ copies of each $v\in \{V_1,\dots,V_{i-1}\}$ and $c_v$ copies of each $v\in[0,n]\setminus\{V_1,\dots,V_{i-1}\}$. Therefore the conditional probability that $\bV_{i}$ is a repetition is given by
    \begin{equation*}
        \p{r(\bV)=i\mid (\bV_1,\dots,\bV_{i-1})=(V_1,\dots,V_{i-1})}=\frac{\sum_{j=1}^{i-1}(c_{V_j}-1)}{n-i+1}.
    \end{equation*}
    Since $\cseq$ is 1-free, this is bounded below by $(i-1)/(n-i+1)$. Averaging over sequences $(V_1,\dots,V_{i-1})$ containing no repetitions, we find that 
    \begin{equation*}
        \p{r(\bV)>i\mid r(\bV)>i-1} \leq 1-\frac{i-1}{n-i+1}.
    \end{equation*}
    Therefore
    \begin{equation}\label{eq:V}
        \p{r(\bV)\geq h\mid r(\bV)\geq y}
        = \prod_{i=y}^{h-1}\p{r(\bV)> i\mid r(\bV)>i-1}
        \leq \prod_{i=y}^{h-1}\left(1-\frac{i-1}{n-i+1}\right).
    \end{equation}
    Identical reasoning applied to $\bW$ yields
    \begin{equation}\label{eq:W}
        \p{r(\bW)\geq h\mid r(\bW)\geq y} = \prod_{i=y}^{h-1}\left(1-\frac{i-1}{n+\pi-i+1}\right).
    \end{equation}
    Equality holds in (\ref{eq:W}) instead of an inequality like in (\ref{eq:V}) because $\bseq$ is binary, which implies $\sum_{j=1}^{i-1}(b_{W_j}-1)=j-1$ for any sequence $(W_1,\dots,W_{i-1})$ with no repetitions and $\p{(\bW_1,\dots,\bW_{i-1})=(W_1,\dots,W_{i-1})}>0$. Comparing the bound (\ref{eq:V}) with the formula (\ref{eq:W}), the desired inequality (\ref{eq:need}) is clear.
\end{proof}

For $i\in\{1,2\}$, we will let $\bX_i$ be (a translate of) the maximum of a random set of indices. The following lemma will be used to ensure that this choice of $\bX_i$ satisfies the assumption of Corollary~\ref{cor:general}.

\begin{lem}\label{lem:max}
    Fix $1\leq j\leq k\leq \ell$ and let $\bA\in_u{[k]\choose j}$ and $\bB\in_u{[\ell]\choose j}$. Then for all $x$,
    \begin{equation*}
        (\max\bA\mid\max\bA\leq x)\st(\max\bB\mid\max\bB\leq x).
    \end{equation*}
\end{lem}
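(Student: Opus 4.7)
The key observation is that conditioning a uniform $j$-subset of $[k]$ on the event that its maximum is at most $x$ simply restricts the ambient set. That is, writing $k^*=\min(\lfloor x\rfloor,k)$ and $\ell^*=\min(\lfloor x\rfloor,\ell)$, the event $\{\max\bA\leq x\}$ coincides with $\{\bA\subseteq[k^*]\}$, and conditional on this event $\bA$ is uniformly distributed on $\binom{[k^*]}{j}$. Analogously, $(\bB\mid\max\bB\leq x)$ is a uniformly random $j$-subset of $[\ell^*]$. Since $j\leq k^*$ (by the implicit nonemptiness of the conditioning events) and $k\leq\ell$ gives $k^*\leq\ell^*$, the problem reduces to showing that for nested universes $[k^*]\subseteq[\ell^*]$, the maximum of a uniform $j$-subset of the smaller universe is stochastically dominated by the maximum of a uniform $j$-subset of the larger one.

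For this reduced statement, I would argue directly at the level of cumulative distribution functions. For any nonnegative integer $t$, the probability that a uniform $j$-subset of $[n]$ has maximum at most $t$ equals $\binom{\min(t,n)}{j}/\binom{n}{j}$, since the event is $\{\bA\subseteq[\min(t,n)]\}$. Applying this with $n=k^*$ and $n=\ell^*$, I split into cases: if $t\geq k^*$, then $\mathbb{P}(\max\bA\leq t\mid\max\bA\leq x)=1$ and the desired inequality holds trivially. If $t<k^*\leq\ell^*$, then both probabilities equal $\binom{t}{j}$ divided by $\binom{k^*}{j}$ and $\binom{\ell^*}{j}$ respectively. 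Monotonicity of $n\mapsto\binom{n}{j}$ together with $k^*\leq\ell^*$ yields
\begin{equation*}
\mathbb{P}(\max\bA\leq t\mid\max\bA\leq x)=\frac{\binom{t}{j}}{\binom{k^*}{j}}\geq\frac{\binom{t}{j}}{\binom{\ell^*}{j}}=\mathbb{P}(\max\bB\leq t\mid\max\bB\leq x),
\end{equation*}
which is precisely the stochastic domination claim.

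There is no real obstacle here: the content of the lemma is that the conditioning $\{\max\leq x\}$ is equivalent to a universe restriction, after which the standard monotonicity of the binomial coefficient closes the proof. The only bookkeeping point is to handle the boundary case where $x$ exceeds $k$ (so the conditioning on $\bA$ is trivial) by using $\min(\lfloor x\rfloor,k)$ and $\min(\lfloor x\rfloor,\ell)$ uniformly, which the cdf comparison above handles automatically.
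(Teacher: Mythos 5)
Your proof is correct and takes essentially the same route as the paper: replace the conditioning by a uniform $j$-subset of the truncated universe $[\min(\lfloor x\rfloor,k)]$ (the paper writes $k'=\min(k,x)$ after reducing to integer $x$), then compare the two CDFs via monotonicity of $n\mapsto\binom{n}{j}$. The only cosmetic difference is your explicit use of $\lfloor x\rfloor$ versus the paper's upfront "we may assume $x$ is a positive integer."
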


\begin{proof}
    We may assume that $x$ is a positive integer. Let $k'=\min(k,x)$ and $\ell'=\min(\ell,x)$, and take $\bA'\in_u{[k']\choose j}$ and $\bB'\in_u{[\ell']\choose j}$. Then $\bA'\stackrel{d}{=}(\bA\mid\max \bA\leq x)$ and $\bB'\stackrel{d}{=}(\bB\mid\max \bB\leq x)$, so we just need $\max\bA'\st\max\bB'$. This holds because for $x'\leq k'$,
    \begin{align*}
        \p{\max\bA'\leq x'} = \frac{{x'\choose j}}{{k'\choose j}}\geq \frac{{x'\choose j}}{{\ell'\choose j}} = \p{\max\bB'\leq x'},
    \end{align*}
    and for $x'>k'$,
    \begin{equation*}
        \p{\max\bA'\leq x'}=1\geq \p{\max\bB'\leq x'}.\qedhere
    \end{equation*}
\end{proof}

For a child sequence $\cseq=(c_0,\dots,c_n)$ with $c_0=0$, let $\cV_\cseq^*$ be the set of sequences $V\in\cV_\cseq$ such that each multiplicity-one element $v\in[0,n]$ (meaning $c_v=1$) appears in $V$ before the first repetition. Let $\cT_\cseq^*$ be the image of $\cV_\cseq^*$ under the line breaking construction. Explicitly, $\cT_\cseq^*$ consists of the trees $T\in\cT_\cseq$ such that each one-child vertex $v\in[0,n]$ (meaning $c_v=1$) lies on the root-to-0 path in $T$. The lemma below follows from a natural correspondence between pairs $(T,P)$ where $T\in\cT_\cseq$ and $P\in\cP_{m,\h_T(0)}$ and trees $T^*\in\cT_{\cseq^+}^*$, where $\cseq^+$ is obtained from $\cseq$ by appending $m-1$ entries equal to 1. Namely, we let $T^*$ by obtained from $T$ by breaking up the root-to-0 path with one-child vertices in such a way that the pieces of this path have lengths given by the parts of $P$.

\begin{lem}\label{lem:T_to_Tplus}
    Let $\cseq=(c_0,\dots,c_n)$ be a $1$-free child sequence with $c_0=0$ and write $\cseq^+$ for the child sequence on $[0,n+m-1]$ of the form $\cseq^+=(c_0,\dots,c_n,1,\dots,1)$. Let $(\bT,\bP)\in_u\{(T,P)\mid T\in\cT_\cseq, P\in\cP_{m,\h_T(0)}\}$ and let $\bT^*\in_u\cT_{\cseq^+}^*$. Then
    \begin{equation*}
        \h_\bT(0)\stackrel{d}{=}\h_{\bT^*}(0)-m+1.
    \end{equation*}
\end{lem}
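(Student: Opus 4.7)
The plan is to exhibit an $(m-1)!$-to-one surjection
\[
\Psi : \cT_{\cseq^+}^* \longrightarrow \bigl\{(T,P) : T \in \cT_\cseq,\ P \in \cP_{m,\h_T(0)}\bigr\}
\]
whose fibers all have the common size $(m-1)!$ and under which every preimage $T^*$ of $(T,P)$ satisfies $\h_{T^*}(0) = \h_T(0) + (m-1)$. Since the fibers are of uniform size, pushing the uniform law on $\cT_{\cseq^+}^*$ forward through $\Psi$ produces the uniform law on the codomain, and the fiberwise height identity then immediately gives $\h_\bT(0) \eqdist \h_{\bT^*}(0) - (m-1)$.

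To define $\Psi$, fix $T^* \in \cT_{\cseq^+}^*$ and write its root-to-$0$ path as $u_0, u_1, \dots, u_{h^*}$ with $h^* = \h_{T^*}(0)$. By the defining property of $\cT_{\cseq^+}^*$, the one-child vertices $n+1, \dots, n+m-1$ occupy some $m-1$ positions $q_1 < \cdots < q_{m-1}$ along this path, all at most $h^* - 1$ since $c_0 = 0$. Contract these $m-1$ one-child vertices one at a time (delete the vertex and identify its parent with its child; or, if the vertex is currently $u_0$, promote its unique child to root). Since contraction preserves the child count of every remaining vertex and removes exactly the labels $\{n+1, \dots, n+m-1\}$, the result $T$ lies in $\cT_\cseq$, and its root-to-$0$ path has length $h := h^* - (m-1)$. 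Finally, with $q_0 := -1$ and $q_m := h^*$, set $P_i := q_i - q_{i-1} - 1$; the $P_i$ are non-negative and telescope to $\sum_i P_i = h^* + 1 - m = h$, so $P \in \cP_{m,h}$. Put $\Psi(T^*) := (T, P)$.

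For the fiber count, observe that $(T, P)$ determines the positions via the recursion $q_i = q_{i-1} + P_i + 1$, giving $q_i = (P_1 + \cdots + P_i) + (i-1)$ explicitly. A preimage $T^* \in \Psi^{-1}(T,P)$ is then specified exactly by a labeling of the positions $q_1, \dots, q_{m-1}$ by $\{n+1, \dots, n+m-1\}$: on a path of length $h^* = h + m - 1$, place these labels at the designated positions, fill the remaining $h+1$ positions in order with the root-to-$0$ vertices of $T$ (so $0$ sits at position $h^*$), re-attach each off-path subtree of $T$ at its corresponding vertex, and declare the vertex at position $0$ to be the new root of $T^*$. Each of the $(m-1)!$ label assignments gives a distinct valid tree in $\cT_{\cseq^+}^*$, and no other element of $\cT_{\cseq^+}^*$ maps to $(T,P)$, so $|\Psi^{-1}(T,P)| = (m-1)!$.

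The only point requiring genuine care is the boundary case $q_1 = 0$, where $u_0$ is itself a one-child vertex: the contraction must then cascade through a possibly long initial run of one-child vertices, promoting successive children to root, and one needs to verify that the child sequence of the final $T$ and the path-length identity $h = h^* - (m-1)$ both survive this cascade (they do, by induction on the length of the initial run). Aside from this bookkeeping, the argument is pure combinatorial accounting, and once $\Psi$ is verified to be $(m-1)!$-to-one the probabilistic conclusion is immediate.
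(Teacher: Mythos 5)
Your proposal is correct and follows essentially the same route as the paper: both construct the same $(m-1)!$-to-one map from $\cT_{\cseq^+}^*$ to pairs $(T,P)$ by suppressing the one-child vertices $n+1,\dots,n+m-1$ (you do it one at a time, the paper in one step via homeomorphic reduction — the same operation), and both encode the positions of those vertices along the root-to-$0$ path into the composition $P$; your formula $P_i = q_i - q_{i-1} - 1$ with $q_0=-1$, $q_m=h^*$ agrees exactly with the paper's $P_1=\h_{T^*}(v_1)$, $P_i=\h_{T^*}(v_i)-\h_{T^*}(v_{i-1})-1$. The one small remark: your worry about a ``cascade'' when $q_1=0$ is not actually a complication, since each one-vertex contraction is a well-defined rooted-tree operation and the intermediate child sequences are never needed; the paper's single-step homeomorphic reduction makes this moot.
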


\begin{proof}
    We will exhibit an $(m-1)!$-to-1 function which maps trees $T^*\in\cT_{\cseq^+}^*$ to pairs $(T,P)$ with $T\in\cT_\cseq$ and $P\in\cP_{m,\h_T(0)}$ in such a way that $\h_T(0)=\h_{T^*}(0)-(m-1)$. Let $T$ be the homeomorphic reduction of $T^*$, and define $P=(P_1,\dots,P_m)$ as follows. List the one-child vertices in the order they appear on the root-to-0 path in $T^*$ as $(v_1,\dots,v_{m-1})$; also define $v_m=0$. Set $P_1=\h_{T^*}(v_1)$, and for $i\in[2,m]$, set $P_i=\h_{T^*}(v_i)-\h_{T^*}(v_{i-1})-1$. It is clear that $T\in \cT_\cseq$. Furthermore, $\h_{T^*}(v_i)\geq \h_{T^*}(v_{i-1})+1$ and so $P_i\geq 0$ for all $i\in[m]$. Finally, the root-to-$0$ path in $T$ is obtained by suppressing the $m-1$ one-child vertices on the root-to-0 path in $T^*$, so $\h_T(0)=\h_{T^*}(0)-(m-1)$. Hence
    \begin{align*}
        \sum_{i=1}^{m}P_i &= \h_{T^*}(v_1)+\sum_{i=2}^{m}(\h_{T^*}(v_i)-\h_{T^*}(v_{i-1})-1)\\
        &= \h_{T^*}(v_m)-(m-1)\\
        &= \h_{T^*}(0)-(m-1)\\
        &= \h_{T}(0),
    \end{align*}
    and so $P\in\cP_{m,\h_T(0)}$ and our map is well-defined. The $(m-1)!$ preimages of $(T,P)$ are the trees obtained from $T^*$ by permuting the labels of the $m-1$ one-child vertices.
\end{proof}

Given $V=(V_1,\dots,V_n)\in\cV_\cseq$ for some child sequence $\cseq=(c_0,\dots,c_n)$ containing a 1, let $f(V)=\max(i\in[n]: c_{V_i}=1)$ denote the final index of an entry in $V$ which appears exactly once. Then $r(V)\geq f(V)$ if and only if $V\in\cV_\cseq^*$. We can now prove Theorem~\ref{thm:st_dom}. 

\begin{proof}[Proof of Theorem~\ref{thm:st_dom}]
    Let $\bV\in_u\cV_\cseq$, and let $\cseq^+$ be the child sequence on $[0,n+m-1]$ of the form $\cseq^+=(c_0,\dots,c_n,1,\dots,1)$. Let $\bA\in_u{[n+m-1]\choose m-1}$ be independent of $\bV$, and generate $\bV^+\in_u\cV_{\cseq^+}$ from $\bV$ and $\bA$ as follows. Let the entries of $\bV^+$ at the indices in $\bA$ be the elements of $[n+1,n+m-1]$ ordered by a uniformly random permutation independent of $\bA$ and $\bV$. Let the entries of $\bV^+$ at the indices outside $\bA$ be those of $\bV$, with the same order as in $\bV$. Take $\bV^*\in_u\cV_{\cseq^+}^*$ and $\bT^*\in_u\cT_{\cseq^+}^*$. 

    By Lemma~\ref{lem:T_to_Tplus}, $\h_\bT(0)\stackrel{d}{=}\h_{\bT^*}(0)-m+1$, and since the height of $0$ corresponds to the index preceding the first repetition under the line-breaking construction, $\h_{\bT^*}(0)\stackrel{d}{=}r(\bV^*)-1$. Now $\bV^+\in\cV_{\cseq^+}^*$ if and only if $r(\bV^+)\geq f(\bV^+)$, so $\bV^*\stackrel{d}{=}(\bV^+\mid r(\bV^+)\geq f(\bV^+))$. This establishes
    \begin{equation*}
        \h_\bT(0) \stackrel{d}{=} \h_{\bT^*}(0)-(m-1)
        \stackrel{d}{=} r(\bV^*)-m
        \stackrel{d}{=} (r(\bV^+)\mid r(\bV^+)\geq f(\bV^+))-m.
    \end{equation*}

    Observe that $r(\bV^+)\geq f(\bV^+)$ if and only if $r(\bV^+)=r(\bV)+m-1$, because $\cseq$ is $1$-free and no repetition in $\bV^+$ occurs at an index in $\bA$. Therefore $r(\bV^+)\geq f(\bV^+)$ if and only if $r(\bV)+m-1\geq f(\bV^+)$, and on this event, $r(\bV^+)=r(\bV)+m-1$. Hence
    
    \begin{align*}
        (r(\bV^+)\mid r(\bV^+)\geq f(\bV^+)) &= (r(\bV)+m-1 \mid r(\bV)+m-1\geq f(\bV^+))\\
        &= (r(\bV) \mid r(\bV)\geq f(\bV^+)-m+1)+m-1\\
        &= (r(\bV) \mid r(\bV)\geq \max\bA-m+1)+m-1.
    \end{align*}
    
    The same reasoning applies to $\bW\in_u\cV_\bseq$ and an independent $\bB\in_u{[n+\pi+m-1]\choose m-1}$, so 
    \begin{align*}
        \h_\bT(0) &\stackrel{d}{=} (r(\bV) \mid r(\bV)\geq \max\bA-m+1)-1\text{, and}\\
        \h_\bS(0) &\stackrel{d}{=} (r(\bW) \mid r(\bW)\geq \max\bB-m+1)-1.
    \end{align*}
    
    Now $(r(\bV)\mid r(\bV)\geq y)\st(r(\bW)\mid r(\bW)\geq y)$ for all $y$ by Lemma~\ref{lem:pointwise} and it follows from Lemma~\ref{lem:max} that $(\max\bA-m+1\mid\max\bA-m+1\leq x)\st(\max\bB-m+1\mid\max\bB-m+1\leq x)$ for all $x$. Since $\bA$ and $\bV$ are independent and $\bB$ and $\bW$ are independent, Corollary~\ref{cor:general} yields
    \begin{align*}
        \h_\bT(0) &\stackrel{d}{=} (r(\bV) \mid r(\bV)\geq \max\bA-m+1)-1\\
        &\st (r(\bW) \mid r(\bW)\geq \max\bB-m+1)-1\\
        &\stackrel{d}{=} \h_\bS(0).\qedhere
    \end{align*}
    \end{proof}

\subsection{Composition-biased binary trees}\label{sec:tailbd}

In this subsection we prove Theorem~\ref{thm:tail_bound}, which establishes a sub-Gaussian tail bound on the centered and rescaled height of a fixed leaf in a composition-biased random tree. Using the stochastic domination result Theorem~\ref{thm:st_dom}, we can focus on the case of a binary child sequence. 

Before proceeding with the proof, we note that the following considerations suggest that the form of the bound in Theorem~\ref{thm:tail_bound} cannot be improved. For positive integers $m$ and $n$ with $n$ even, let $\bh_{m,n}\eqdist\h_\bT(0)$, where $(\bT,\bP)\in_u\{(T,P):T\in\cT_\bseq,P\in\cP_{m,\h_T(0)}\}$ for some binary child sequence $\bseq=(b_0,\dots,b_n)$ with $b_0=0$. By Lemma~\ref{lem:pmf}, the probability mass function of $\bh_{m,n}$ is proportional to the function $p(h)$ defined below. By analyzing the asymptotics of $p(h)$ one can show that $\bh_{m,n}/\sqrt{n}\to\bh_m$ in distribution as $n\to \infty$ for a continuous random variable $\bh_m$ with density proportional to $x^me^{-x^2/2}\I{x\geq0}$. Furthermore, $\bh_m-\sqrt{m}$ converges in distribution as $m\to\infty$ to a normal random variable with mean $0$ and variance $1/2$, suggesting that $(\bh_{m,n}-\sqrt{mn})/\sqrt{n/2}$ is approximately standard Gaussian when $1\ll m\ll n$.

For the rest of this section, fix positive integers $m$ and $n$ with $n$ even. We prove Theorem~\ref{thm:tail_bound} by analyzing $\bh_{m,n}$ and then applying Theorem~\ref{thm:st_dom}. Define $p:[n/2]\to\mathbb{R}$ by
\begin{equation*}
    p(h)=(h+m-1)_{m-1}\frac{h}{n-h}\prod_{i=1}^{h-1}\left(1-\frac{i}{n-i}\right),
\end{equation*}
so $\p{\bh_{m,n}=h}\propto p(h)$ by Lemma~\ref{lem:pmf}. Then for all $h\in[n/2]$,
\begin{equation}
    \p{\bh_{m,n}\geq h} = \frac{\sum_{k\in[h,n/2]}p(k)}{\sum_{k\in[n/2]}p(k)}.
\end{equation}
We will upper-bound the numerator and lower-bound the denominator by controlling the ratios $p(h+1)/p(h)$. The regime $m\ll h\ll n$ is most important, so the following should be read as asserting that $p(h+1)/p(h)\approx \exp(m/h-h/n)$.

\begin{lem}\label{lem:ratio}
    For all $h\in[1,n/2)$, 
    \begin{equation*}
        \exp\left(\frac{m}{h+m}-\frac{h}{n-2h}\right) \leq \frac{p(h+1)}{p(h)} \leq \exp\left(\frac{m}{h}-\frac{h-2}{n}\right).
    \end{equation*}
\end{lem}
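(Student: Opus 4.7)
The plan is to compute $p(h+1)/p(h)$ exactly, factor the resulting rational expression into two simple pieces, and then apply the elementary inequalities $1+x \le e^x$ and $\ln(1+x) \ge x/(1+x)$ to each piece. Concretely, using that $(h+m)_{(m-1)}/(h+m-1)_{(m-1)} = (h+m)/(h+1)$ and that the telescoping product $\prod_{i=1}^h(1-i/(n-i))$ relative to $\prod_{i=1}^{h-1}(1-i/(n-i))$ gives $(n-2h)/(n-h)$, a short calculation yields
\begin{equation*}
\frac{p(h+1)}{p(h)}=\frac{h+m}{h+1}\cdot\frac{(h+1)(n-h)}{h(n-h-1)}\cdot\frac{n-2h}{n-h}=\frac{h+m}{h}\cdot\frac{n-2h}{n-h-1}.
\end{equation*}

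For the upper bound, I would apply $1+x\le e^x$ to each factor, writing $(h+m)/h = 1+m/h \le e^{m/h}$ and $(n-2h)/(n-h-1)=1-(h-1)/(n-h-1)\le \exp(-(h-1)/(n-h-1))$. To finish, I need the elementary inequality $(h-1)/(n-h-1)\ge (h-2)/n$, equivalent to $h(h-1)+(n-2)\ge 0$, which is immediate. Combining gives the claimed upper bound $\exp(m/h - (h-2)/n)$.

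For the lower bound, I use $\ln(1+x)\ge x/(1+x)$ on the first factor to get $(h+m)/h \ge \exp(m/(h+m))$, and I use $\ln(1+x)\le x$ on the reciprocal of the second factor: writing $(n-h-1)/(n-2h)=1+(h-1)/(n-2h)$, this gives $\ln((n-h-1)/(n-2h))\le (h-1)/(n-2h)\le h/(n-2h)$, and hence $(n-2h)/(n-h-1)\ge \exp(-h/(n-2h))$. Multiplying yields the lower bound $\exp(m/(h+m) - h/(n-2h))$.

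No step here is an obstacle; the only thing that requires a moment's thought is making the two ``slack'' inequalities $-(h-1)/(n-h-1)\le -(h-2)/n$ and $(h-1)/(n-2h)\le h/(n-2h)$ go in the correct directions, which is what forces the slightly asymmetric form of the two-sided bound stated in the lemma.
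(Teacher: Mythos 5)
Your proof is correct and takes essentially the same approach as the paper: compute the exact ratio $p(h+1)/p(h)$, factor it, and apply elementary exponential inequalities ($1+x\le e^x$, $\ln(1+x)\ge x/(1+x)$) to each factor. The only difference is that you simplify the ratio down to the two-factor form $\frac{h+m}{h}\cdot\frac{n-2h}{n-h-1}$ before bounding, while the paper works with a four-factor decomposition (and bounds the reciprocal for the lower bound); your simplification is slightly cleaner but leads to the same result.
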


\begin{proof}
    For the upper bound, 
    \begin{align*}
        \frac{p(h+1)}{p(h)} &= \left(1+\frac{m-1}{h+1}\right)\left(1+\frac{1}{h}\right)\left(1+\frac{1}{n-h-1}\right)\left(1-\frac{h}{n-h}\right)\\
        &\leq \exp\left(\frac{m-1}{h+1}+\frac{1}{h}+\frac{1}{n-h-1}-\frac{h}{n-h}\right)\\
        &\leq \exp\left(\frac{m-1}{h}+\frac{1}{h}+\frac{2}{n-h}-\frac{h}{n-h}\right)\\
        &= \exp\left(\frac{m}{h}-\frac{h-2}{n-h}\right)\\
        &\leq \exp\left(\frac{m}{h}-\frac{h-2}{n}\right).
    \end{align*}
    To see the second inequality, note that $n-h>2h-h\geq 1$ since $h\in[1,n/2)$. Therefore $n-h\geq 2$, which implies $1/(n-h-1)\leq 2/(n-h)$. 

    For the lower bound,\belowdisplayskip=-11pt
    \begin{align*}
        \frac{p(h)}{p(h+1)} &=  \left(1-\frac{m-1}{h+m}\right)\left(1-\frac{1}{h+1}\right)\left(\frac{n-h-1}{n-h}\right)\left(1+\frac{h}{n-2h}\right)\\
        &\leq\exp\left(-\frac{m-1}{h+m}-\frac{1}{h+1}+\frac{h}{n-2h}\right)\\
        &\leq\exp\left(-\frac{m-1}{h+m}-\frac{1}{h+m}+\frac{h}{n-2h}\right)\\
        &= \exp\left(-\frac{m}{h+m}+\frac{h}{n-2h}\right).
    \end{align*}
\end{proof}

\begin{lem}\label{lem:num}
    For $h\geq 5$ and $t\geq 1$ such that $t+h\leq n/2$,
    \begin{equation*}
        p(h+t) \leq p(h)\exp\left(\frac{tm}{h}-\frac{th}{2n}-\frac{t^2}{2n}\right).
    \end{equation*}
\end{lem}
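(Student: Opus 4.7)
The plan is to prove Lemma~\ref{lem:num} by telescoping the upper bound from Lemma~\ref{lem:ratio}. Writing
\begin{equation*}
    p(h+t) = p(h) \prod_{i=0}^{t-1} \frac{p(h+i+1)}{p(h+i)},
\end{equation*}
and applying the upper bound in Lemma~\ref{lem:ratio} (which is valid because $h+i < h+t \leq n/2$ for each $i \in [0,t-1]$), we obtain
\begin{equation*}
    p(h+t) \leq p(h) \exp\!\left( \sum_{i=0}^{t-1} \frac{m}{h+i} - \sum_{i=0}^{t-1} \frac{h+i-2}{n} \right).
\end{equation*}
So the lemma reduces to the deterministic inequality
\begin{equation*}
    \sum_{i=0}^{t-1} \frac{m}{h+i} - \sum_{i=0}^{t-1} \frac{h+i-2}{n} \;\leq\; \frac{tm}{h} - \frac{th}{2n} - \frac{t^2}{2n}.
\end{equation*}

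For the first sum, each term is at most $m/h$, giving the trivial bound $\sum_{i=0}^{t-1} m/(h+i) \le tm/h$. For the second sum, I would compute the arithmetic progression exactly:
\begin{equation*}
    \sum_{i=0}^{t-1}(h+i-2) = th + \frac{t(t-1)}{2} - 2t.
\end{equation*}
Combining, it suffices to check that
\begin{equation*}
    -\frac{th}{n} - \frac{t(t-1)}{2n} + \frac{2t}{n} \;\leq\; -\frac{th}{2n} - \frac{t^2}{2n},
\end{equation*}
which, after multiplying through by $2n$ and simplifying, becomes $t(5-h) \le 0$; this is exactly where the hypothesis $h \ge 5$ is used.

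This is a short deterministic calculation, so there is no real obstacle; the only care needed is (i) checking the range conditions so that the upper bound in Lemma~\ref{lem:ratio} applies at every index $h+i$, and (ii) tracking the constant $5$ that arises from the $-2t/n$ error term in the arithmetic sum, which pinpoints why the hypothesis $h \ge 5$ appears in the statement.
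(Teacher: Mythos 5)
Your proof is correct and follows essentially the same route as the paper: telescope the ratios $p(h+i+1)/p(h+i)$ using the upper bound of Lemma~\ref{lem:ratio}, bound $\sum_i m/(h+i)$ trivially by $tm/h$, evaluate the arithmetic sum $\sum_i(h+i-2)$ exactly, and observe that the resulting inequality reduces to $t(5-h)\le 0$, which is exactly where $h\ge 5$ enters. The only cosmetic difference is that the paper writes the arithmetic sum as $t(h-\tfrac52)+\tfrac{t^2}{2}$ and compares term by term, whereas you clear denominators and simplify to $t(5-h)\le 0$; these are the same calculation.
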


\begin{proof}
    By the upper-bound in Lemma~\ref{lem:ratio},
    \begin{align*}
        p(h+t) &= p(h)\prod_{i=h}^{h+t-1}\frac{p(i+1)}{p(i)}\\
        &\leq p(h)\exp\left(\sum_{i=h}^{h+t-1}\left(\frac{m}{i}-\frac{i-2}{n}\right)\right)\\
        &\leq p(h)\exp\left(\frac{tm}{h}-\frac{1}{n}\sum_{i=h}^{h+t-1}(i-2)\right)\\
        &= p(h)\exp\left(\frac{tm}{h}-\frac{1}{n}\left(t\left(h-\frac{5}{2}\right)+\frac{t^2}{2}\right)\right)\\
        &\leq p(h)\exp\left(\frac{tm}{h}-\frac{th}{2n}-\frac{t^2}{2n}\right),
    \end{align*}
    where the final inequality holds because $h\geq 5$.
\end{proof}

\begin{lem}\label{lem:denom}
    For $1\leq t\leq h\leq n/8$, 
    \begin{equation*}
        p(h+t) \geq p(h)\exp\left(-\frac{4th}{n}\right).
    \end{equation*}
\end{lem}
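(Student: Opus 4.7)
The plan is a direct telescoping argument using the lower-bound half of Lemma~\ref{lem:ratio}. Writing
\[
\frac{p(h+t)}{p(h)} \;=\; \prod_{i=h}^{h+t-1} \frac{p(i+1)}{p(i)} \;\geq\; \exp\!\left(\sum_{i=h}^{h+t-1}\left(\frac{m}{i+m} - \frac{i}{n-2i}\right)\right),
\]
and observing that $m/(i+m)\geq 0$, we may drop that term without harming the lower bound, leaving
\[
\frac{p(h+t)}{p(h)} \;\geq\; \exp\!\left(-\sum_{i=h}^{h+t-1} \frac{i}{n-2i}\right).
\]

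Next, I would use the hypothesis $t\leq h\leq n/8$ to bound each summand. For every index $i$ in the range, $i \leq h+t-1 \leq 2h-1 < n/4$, so $n-2i > n/2$ and therefore $i/(n-2i) < 2i/n$. Summing this estimate gives
\[
\sum_{i=h}^{h+t-1} \frac{i}{n-2i} \;<\; \frac{2}{n} \sum_{i=h}^{h+t-1} i \;=\; \frac{t(2h+t-1)}{n} \;\leq\; \frac{3th}{n} \;\leq\; \frac{4th}{n},
\]
where the penultimate inequality uses $t\leq h$ to conclude $2h+t-1 \leq 3h$. Exponentiating yields the required inequality.

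There is essentially no obstacle here: the lemma is the mirror image of Lemma~\ref{lem:num}, and the constants in the hypothesis ($h\leq n/8$, and the slack between $3th/n$ and $4th/n$) are generous enough that the straightforward estimate suffices. The only care needed is to ensure that $n-2i$ stays bounded below by a constant multiple of $n$ throughout the summation range, which is exactly what the assumption $h\leq n/8$ provides.
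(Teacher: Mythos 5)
Your proof is correct and follows essentially the same strategy as the paper: telescope via Lemma~\ref{lem:ratio}, discard the nonnegative $m/(i+m)$ term, and bound $\sum_{i=h}^{h+t-1} i/(n-2i)$ using the hypothesis $h \leq n/8$ to keep the denominators bounded below by a constant multiple of $n$. The only cosmetic difference is that you bound each denominator by $n/2$ and then sum the arithmetic progression, whereas the paper replaces every summand by the largest one and multiplies by $t$; both routes exploit the same slack and land within the stated constant.
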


\begin{proof}
    Note that $t+h-1<n/2$ since $t\leq h\leq n/8$. Hence, by the lower-bound in Lemma~\ref{lem:ratio},
    \begin{align*}
        p(h+t) &= p(h)\prod_{i=h}^{h+t-1}\frac{p(i+1)}{p(i)}\\
        &\geq p(h)\exp\left(\sum_{i=h}^{h+t-1}\left(\frac{m}{i+m}-\frac{i}{n-2i}\right)\right)\\
        &\geq p(h)\exp\left(-\frac{t(h+t-1)}{n-2(h+t-1)}\right)\\
        &\geq p(h)\exp\left(-\frac{2th}{n-4h}\right)\\
        &= p(h)\exp\left(-\frac{4th}{n}\right),
    \end{align*}
    where the last inequality holds because $h\leq n/8$. 
\end{proof}

\begin{proof}[Proof of Theorem~\ref{thm:tail_bound}] 
    For now suppose that $\cseq$ is binary, so $\p{\bh=h}\propto p(h)$. Note that $1.9\sqrt{mn}\geq \lceil1.8\sqrt{mn}\rceil\geq 5$ since $m\geq 1$ and $n\geq 64m$, so we can apply Lemma~\ref{lem:num} with $h=\lceil1.8\sqrt{mn}\rceil$ to obtain
    \begin{align*}
        &\sum_{1.9\sqrt{mn}+x\sqrt{n}\leq k\leq n/2}p(k)\\
        &\leq \sum_{x\sqrt{n}\leq t\leq n/2-1.9\sqrt{mn}}p(\lceil1.8\sqrt{mn}\rceil+t)\\
        &\leq \sum_{x\sqrt{n}\leq t\leq n/2-1.9\sqrt{mn}}p(\lceil1.8\sqrt{mn}\rceil)\exp\left(\frac{tm}{\lceil1.8\sqrt{mn}\rceil}-\frac{t\lceil1.8\sqrt{mn}\rceil}{2n}-\frac{t^2}{2n}\right)\\
        &\leq p(\lceil1.8\sqrt{mn}\rceil)\sum_{t\geq x\sqrt{n}}\exp\left(\frac{tm}{1.8\sqrt{mn}}-\frac{1.8t\sqrt{mn}}{2n}-\frac{t^2}{2n}\right).
    \end{align*}
    Using that for $t \ge x\sqrt{n}$,
    \[
    \exp\left(\frac{tm}{1.8\sqrt{mn}}-\frac{1.8t\sqrt{mn}}{2n}
    \right)
    \le 
    \exp\left(-\frac{t\sqrt{m}}{3\sqrt{n}}
    \right)
    \le \exp\left(-\frac{x\sqrt{m}}{3}\right)\, ,
    \]
     it follows that 
    \begin{equation}\label{eq:pk1.9bd}
    \sum_{1.9\sqrt{mn}+x\sqrt{n}\leq k\leq n/2}p(k)
    \le \exp\left(-\frac{x\sqrt{m}}{3}\right)p(\lceil1.8\sqrt{mn}\rceil)\sum_{t\geq x\sqrt{n}}\exp\left(-\frac{t^2}{2n}\right).
    \end{equation}
    Next, since $n\geq 64m$, we have $1\leq \lfloor\sqrt{n/m}\rfloor\leq \lfloor\sqrt{mn}\rfloor\leq n/8$ and hence $1\leq \lfloor\sqrt{n/m}\rfloor+\lfloor\sqrt{mn}\rfloor\leq n/2$, so by Lemma~\ref{lem:denom} with $h=\lfloor\sqrt{mn}\rfloor$ and $1\leq t\leq \lfloor \sqrt{n/m}\rfloor$,
    \[
        \sum_{k\in[n/2]}p(k) \geq \sum_{1\leq t\leq \sqrt{n/m}}p(\lfloor\sqrt{mn}\rfloor+t)
        \geq \sum_{1\leq t\leq \sqrt{n/m}}p(\lfloor\sqrt{mn}\rfloor)\exp\left(-\frac{4t\lfloor\sqrt{mn}\rfloor}{n}\right),
    \]
    which since 
    \[
    \sum_{1\leq t\leq \sqrt{n/m}}\exp\left(-\frac{4t\lfloor\sqrt{mn}\rfloor}{n}\right)
    \ge 
    \lfloor\sqrt{n/m}\rfloor \exp\left(-\frac{4\lfloor\sqrt{n/m}\rfloor\lfloor\sqrt{mn}\rfloor}{n}\right)
    \ge e^{-4-3/4}\sqrt{n/m},
    \]
    yields
    $\sum_{k\in[n/2]}p(k) \ge e^{-4-3/4}\sqrt{n/m}\,p(\lfloor\sqrt{mn}\rfloor)$. 
    Applying Lemma~\ref{lem:num} with $h=\lfloor\sqrt{mn}\rfloor$ and $t=\lceil1.8\sqrt{mn}\rceil-h$, we find that $p(\lceil1.8\sqrt{mn}\rceil)\leq p(\lfloor\sqrt{mn}\rfloor)e^{1/4}$. Combining these two bounds with \eqref{eq:pk1.9bd}, we thus have
    \begin{align*}
        \p{\bh\geq 1.9\sqrt{mn}+x\sqrt{n}} &= \frac{\sum_{1.9\sqrt{mn}+x\sqrt{n/m}\leq h\leq n/2}p(h)}{\sum_{h\in[n/2]}p(h)}\\
        &\leq \frac{\exp\left(-\frac{x\sqrt{m}}{3}\right)p(\lceil1.8\sqrt{mn}\rceil)\sum_{t\geq x\sqrt{n}}\exp\left(-\frac{t^2}{2n}\right)}{e^{-4-3/4}\sqrt{n/m}\,p(\lfloor\sqrt{mn}\rfloor)}\\
        &\leq \frac{\exp(-\frac{x\sqrt{m}}{3})e^{1/4}\sum_{t\geq x\sqrt{n}}\exp(-\frac{t^2}{2n})}{e^{-4-3/4}\sqrt{n/m}}.
    \end{align*}
    Now because $n \ge 64 m \ge 64$, we have
    \[
    \sum_{t\geq x\sqrt{n}}e^{-\frac{t^2}{2n}}\frac{1}{\sqrt{n}}
     \le \int_{x-\frac{1}{\sqrt{n}}}^{\infty}e^{-t^2/2}\d t
    \le \int_{x-\frac{1}{8}}^{\infty}e^{-t^2/2}\d t.
    \]
    Together with the preceding bound this yields that 
    \[
    \p{\bh\geq 1.9\sqrt{mn}+x\sqrt{n}} 
    \le
    \exp\left(5-\frac{x\sqrt{m}}{3}+\frac{\log m}{2}\right)\int_{x-\frac{1}{8}}^{\infty}e^{-t^2/2}\d t.
    \]
    It is easily checked that $-\frac{2.5\sqrt{m}}{3}+\frac{\log m}{2}\leq -0.8$ for all $m\geq 1$. So, fixing $x\geq 2.5$, we have
    \begin{equation*}
        \exp\left(5-\frac{x\sqrt{m}}{3}+\frac{\log m}{2}\right)\leq \exp(4.2).
    \end{equation*}
    Furthermore $x-1/8\geq e^{1/4}$, and so
    \begin{equation*}
        \int_{x-\frac{1}{8}}^{\infty}e^{-t^2/2}\d t \leq \frac{\exp\left(-\frac{1}{2}\left(x-\frac{1}{8}\right)\right)^2}{x-1/8} \leq \exp\left(-\frac{x^2}{2}+\frac{x}{8}-\frac{1}{4}\right).
    \end{equation*}
    Thus, when $x\geq2.5$,
    \begin{equation}\label{eq:bin}
        \p{\bh\geq 1.9\sqrt{mn}+x\sqrt{n}} \leq \exp\left(-\frac{x^2}{2}+\frac{x}{8}+3.95\right).
    \end{equation}
    When $0\leq x<2.5$, this bound is greater than 1, and therefore (\ref{eq:bin}) holds for all $x\geq 0$. Now (\ref{eq:bin}) is slightly stronger than the bound claimed in the theorem, so this completes the proof in the case where $\cseq$ is binary.

    If $\cseq$ is not binary, then consider a binary child sequence $\bseq=(b_0,\dots,b_{n+\pi})$ with $b_0=0$, where $\pi$ is the parity of $n$. Take $(\bS,\bQ)\in_u\{(S,Q): S\in\cT_\bseq,Q\in\cP_{m,\h_S(0)}\}$, so by Theorem~\ref{thm:st_dom}, $\bh\st\h_\bS(0)$. Note that $2\sqrt{mn}\geq 1.9\sqrt{m(n+1)}$ and $\sqrt{n}\geq\sqrt{n+1}/1.1$ as $n\geq 64m\geq 64$. Applying (\ref{eq:bin}) to $\h_\bS(0)$ yields the desired tail bound:
    \begin{align*}
        \p{\bh\geq2\sqrt{mn}+x\sqrt{n}} &\leq \p{\bh\geq1.9\sqrt{m(n+\pi)}+\frac{x}{1.1}\sqrt{n+\pi}}\\
        &\leq \p{\h_\bS(0)\geq1.9\sqrt{m(n+\pi)}+\frac{x}{1.1}\sqrt{n+\pi}}\\
        &\leq \exp\left(-\frac{(x/1.1)^2}{2}+\frac{x/1.1}{8}+3.95\right)\\
        &\leq \exp\left(-\frac{x^2}{3}+4\right).\qedhere
    \end{align*}
\end{proof}

\section{The diameter of the kernel of a core with no cycle components}\label{sec:ker_diam}

The goal of this section is to prove Theorem~\ref{thm:ker_diam}, so throughout, we fix positive integers $n\leq \ell$ and a 1-free degree sequence $\dseq$ of the form $\dseq=(d_1,\dots,d_\ell)=(d_1,\dots,d_n,2,\dots,2)$ with $d_v\geq 3$ for all $v\in[n]$. We take $\bC\in_u\cG_\dseq^-$ and aim to prove that, writing $2m=\sum_{v \in [n]}d_v$, then 
\begin{equation*}
        \p{\diam^+(K(\bC))\geq31\log m}=O(1/m).
\end{equation*}

Before turning to the proof, we note that this bound is tight for some degree sequences. Consider the case $d_1=\cdots=d_n=4$, so $m=2n$. If $\ell$ is sufficiently large compared to $n$, then $K(\bC)$ can be approximated by a sample from the 4-regular configuration model with an arbitrarily small error in total variation distance. In the 4-regular configuration model, each vertex $v\in[n]$ forms an isolated double-loop with probability $\Omega(1/m^2)$ and these events are asymptotically independent for different vertices. Using the second moment method, and more specifically the fact that for a non-negative random variable $\bX$ it holds that $\p{\bX > 0} \ge \E{\bX}^2/\E{\bX^2}$, we conclude that the probability that there exists $v\in[n]$ forming a double-loop is $\Omega(n/m^2)=\Omega(1/m)$. Thus, $\p{\diam^+(K(\bC))=\infty}=\Omega(1/m)$ for this choice of $\dseq$.

As mentioned in the proof overview, we will prove the theorem by studying a breadth-first exploration of the kernel. It is useful to augment the core with the additional data of half-edge labels at each kernel vertex. The set of \emph{half-edges} is $\bigcup_{v\in[n]}\{v1,\dots,vd_v\}$, where for $v\in[n]$ and $i\in[d_v]$ the notation $vi$ is shorthand for the ordered pair $(v,i)$. In this section we view kernels as perfect matchings of the set of half-edges. We say that a half-edge of the form $vi$ is a half-edge \emph{at} $v$, or that $vi$ is \emph{incident} to $v$. 

An \emph{augmented core} $A$ with degree sequence $\dseq$ is a set of pairs $(\{ui,vj\},(p_1,\dots,p_a))$ where $ui$ and $vj$ are distinct half-edges and $(p_1,\dots,p_a)$ is a sequence of distinct degree 2 vertices (elements of $[\ell]\setminus [n]$) such that the following hold. First, the \emph{kernel} of $A$, which we denote by $K(A)$ and consists of the pairs $\{ui,vj\}$ such that $(\{ui,vj\},(p_1,\dots,p_a))\in A$ for some sequence $(p_1,\dots,p_a)$, forms a perfect matching of the set of half-edges. We allow $a=0$ in which case $(p_1,\dots,p_a)=\varnothing$. Second, $[\ell]\setminus[n]$ is partitioned by the sequences $(p_1,\dots,p_a)$ such that $(\{ui,vj\},(p_1,\dots,p_a))\in A$ for some $\{ui,vj\}\in K(A)$. Given such a set of pairs $A$, one can construct a multigraph $C(A)$ with degree sequence $\dseq$ by introducing, for each $(\{ui,vj\},(p_1,\dots,p_a))\in A$ where $ui\prec_{\operatorname{lex}}vj$, a path from $u$ to $v$ whose sequence of internal vertices is $(p_1,\dots,p_a)$. For example, for the graph $G$ in Figure~\ref{fig:biasedtree}, one augmented core $A$ with $C(A)=C(G)$ is given by 
\begin{align*}
A& =\{(\{11,21\},(9,5)),
    (\{12,31\},(8,10,7)),
    (\{13,41\},\varnothing),\\
&\qquad  (\{22,32\},(6,11)),
    (\{23,42\},\varnothing),
    (\{33,43\},\varnothing)\}\, .
\end{align*}
Note that $C(A)$ will have no cycle components because $d_v\geq 3$ for all $v\in[n]$. The elements of $K(A)$ are called {\em kernel edges}, and for $e\in K(A)$, we write $\len_A(e)$ for the length of of the corresponding path in $C(A)$; for $(e,(p_1,\dots,p_a))\in A$, we set $\len_A(e)=a+1$. Kernel edge $e$ is {\em subdivided} in $A$ if $\len_A(e)\geq 2$ and {\em non-subdivided} otherwise. 

For $A$ to be an augmented core, we finally require that $C(A)$ be a simple graph. In other words, we insist that $\len_A(\{ui,vj\})\geq 3$ for all $\{ui,vj\}\in K(A)$ such that $u=v$, and that $\len_A(\{ui,vj\})\geq 2$ for all but at most one of the kernel edges $\{ui,vj\}\in K(A)$ whose pair of half-edges are at a fixed pair of distinct vertices $u$ and $v$. We write $\cA_\dseq$ for the set of all augmented cores with degree sequence $\dseq$. 

For each $C\in\cG_\dseq^-$, and $v\in[n]$, the number of ways to assign the half-edge labels $v1,\dots,vd_v$ to the paths of degree 2 vertices in $C$ emanating from $v$ is $d_v!$. These assignments are made independently at each $v\in[n]$, so $|\{A\in\cA_\dseq:C(A)=C\}|=\prod_{v\in[n]}d_v!$. That this does not depend on $C$ implies the following observation, which allows us to study $K(\bA)$ where $\bA\in\cA_\dseq$ rather than $K(\bC)$ where $\bC\in_u\grd^-$.

\begin{obs}\label{obs:A=C}
    If $\bA\in_u\cA_\dseq$, then $C(\bA)\in_u\cG_\dseq^-$.
\end{obs}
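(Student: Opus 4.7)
The plan is to prove the observation by verifying that the map $C:\cA_\dseq\to\cG_\dseq^-$ is exactly $\prod_{v\in[n]} d_v!$-to-one; since this fiber size does not depend on the target graph, $C(\bA)\in_u\cG_\dseq^-$ follows immediately when $\bA\in_u\cA_\dseq$.

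To count $|C^{-1}(G)|$ for a fixed $G\in\cG_\dseq^-$, I would establish a bijection between $C^{-1}(G)$ and the set of tuples $\phi=(\phi_v)_{v\in[n]}$, where each $\phi_v$ is a bijection from the half-edges at $v$, namely $\{v1,\dots,vd_v\}$, to the set of edge-ends at $v$ in $K(G)$. Each non-loop kernel edge at $v$ contributes one edge-end to $v$; each loop $e$ at $v$ contributes two, and these two ends are distinguished by the identity of the internal vertex of $C(e)$ adjacent to them in $G$. Because $\dseq$ is 1-free and $G\in\cG_\dseq^-$ is simple, the simplicity requirement built into the definition of $\cA_\dseq$ forces each loop in $K(G)$ to have at least two internal vertices, so these two adjacencies are to distinct elements of $[\ell]\setminus[n]$ and the edge-end set at $v$ has size $d_v$. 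Consequently the number of such tuples is $\prod_{v\in[n]}d_v!$, independent of $G$.

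The forward map sends $\phi$ to the augmented core $A(\phi)$ whose element corresponding to each kernel edge $e$ with ends $e^u$ at $u$ and $e^v$ at $v$ is $(\{\phi_u(e^u),\phi_v(e^v)\},(p_1,\dots,p_a))$, where $(p_1,\dots,p_a)$ is the internal-vertex sequence of the core path $C(e)$, oriented according to the paper's lex-ordering convention: $p_1$ is adjacent to the endpoint whose assigned half-edge is lex-smaller. The inverse reads $\phi$ off from $A$: for a non-loop augmented element $(\{ui,vj\},\cdot)$ with $u\neq v$, the bijection $\phi_u$ sends $i$ to the $u$-end of the corresponding kernel edge, and similarly for $\phi_v$; for a loop element $(\{vi,vj\},(p_1,\dots,p_a))$ with $vi\prec vj$, the bijection $\phi_v$ sends $i$ to the loop-end adjacent to $p_1$ and $j$ to the loop-end adjacent to $p_a$. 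One then checks directly that $C(A(\phi))=G$ and that the two constructions are mutual inverses.

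The main point to verify is injectivity of the forward map in the presence of loops: a naive count that ignored the orientation of a loop's internal-vertex sequence would suggest only $\prod_v d_v!/2^{L(G)}$ preimages, where $L(G)$ is the number of loops in $K(G)$, because swapping the two half-edges used at a loop preserves the unordered pair $\{vi,vj\}$. However, this swap reverses the sequence $(p_1,\dots,p_a)$ in the corresponding augmented element, and since $a\geq 2$ for every loop and the entries of the sequence are distinct, the reversed sequence is genuinely different from the original. Hence the swap produces a distinct augmented core, and no overcounting occurs — this is the crux of the argument and the only place where the definitional subtleties of $\cA_\dseq$ are really being used.
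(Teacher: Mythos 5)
Your proof is correct and follows essentially the same route as the paper's: you show the fiber of $C$ over each $G\in\cG_\dseq^-$ has size exactly $\prod_{v\in[n]}d_v!$ by a bijection with tuples of bijections indexed by vertices, and you correctly isolate the loop/orientation subtlety as the reason there is no $2^{L(G)}$ overcount. The only cosmetic difference is that the paper's bijections $\sigma_u:[d_u]\to N_C(u)$ target $C$-neighborhoods directly rather than abstract edge-ends in $K(G)$ (these two target sets are canonically identified by ``internal vertex adjacent to $u$'', or the other endpoint when unsubdivided), and there is a small notational slip in your forward map where $\phi_u(e^u)$ should read $\phi_u^{-1}(e^u)$.
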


For a kernel represented by a perfect matching $K$ of half-edges, we identify subgraphs with subsets $L\subset K$, with one exception: for $v\in[n]$, by $L=\{v\}$ we mean the edgeless subgraph whose only vertex is $v$. We write $V(L)$ for the \emph{vertex set} of $L$, which contains the \emph{endpoints} $u$ and $v$ of all edges $\{ui,vj\}\in L$. We write $\partial L$ for the \emph{boundary} of $L$, which consists of all half-edges $ui$ such that $u\in V(L)$ but $\{ui,vj\}\not\in L$ for any half-edge $vj$.

For $A\in\cA_\dseq$ and $v\in[n]$, the {\em breadth-first kernel exploration} $(K_t,t\geq 0)=(K_t(A,v),t\geq0)$ of $A$ started from $v$ is constructed as follows. Let $K_0=\{v\}$ and for $t\geq0$, if $\partial K_t=\varnothing$, then let $K_{t+1}=K_t$. Otherwise, choose $ui\in\partial K_t$ to be lexicographically minimal among all half-edges in $\partial K_t$ nearest to $v$. Let $e\in K(A)$ be such that $ui\in e$ and let $K_{t+1}=K_t\cup\{e\}$.

This process explores the component of $K(A)$ containing $v$ one edge at a time in breadth-first order. So, if $s$ is the number of edges in the component of $K(A)$ containing $v$ (alternatively $s=\min(t : \partial K_t=\varnothing)$), then $|K_t|=t$ for all $t\leq s$ and $|K_t|=s$ for all $t\geq s$. We write $\rad(K_t)=\max(\dist_{K_t}(v,u):u\in V(K_t))$, and for $t<s$, we denote the unique edge in $K_{t+1}\setminus K_t$ by $e_{t+1}=e_{t+1}(A,v)$. We say that $e_{t+1}$ is a \emph{back-edge} if both of its endpoints are in $K_t$, and that it is a \emph{loop} if its endpoints are equal. If $e_{t+1}$ is a back-edge, then $|\partial K_{t+1}|=|\partial K_t|-2$, but if not, then $e_{t+1}$ has an endpoint $w$ not in $K_t$, and $|\partial K_{t+1}|=|\partial K_t|+d_w-2\geq |\partial K_t|+1$. We call $\partial K_t$ the \emph{queue} at time $t$, and say that an edge $e$ is explored by time $t$ if $e\in K_t$.

\begin{figure}[htb]
    \centering
    \includegraphics[width=0.3\linewidth,page=2]{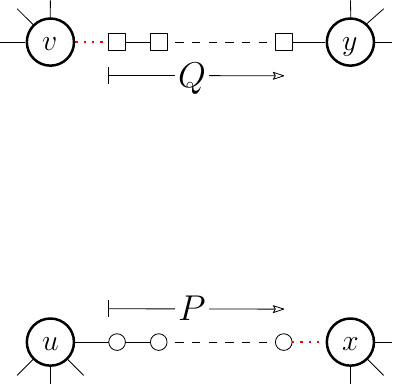}
    \qquad\qquad
    \includegraphics[width=0.3\linewidth,page=1]{pathswitch.pdf}
    \caption{Left, a portion of $C(A)$ for an augmented core $A$, corresponding to pairs $(\{ui,vj\},(p_1,\ldots,p_a))$ and $(\{xk,y\ell\},(q_1,\ldots,q_b))$. If $ui\prec_{\operatorname{lex}}vj$ then $P$ is $(p_1,\ldots,p_a)$, and otherwise $P$ is $(p_a,\ldots,p_1)$, and likewise $Q$ is either $(q_1,\ldots,q_b)$ or $(q_b,\ldots,q_1)$ depending on whether or not $xk\prec_{\operatorname{lex}}y\ell$. Right, the same portion of $C(A)$ after switching on $e=(ui,vj)$ and $f=(xk,y\ell)$. 
    The curved arrows in the left figure indicate how the paths $P$ and $Q$ ``pivot'' as a result of the switching operation. The red dotted edge attached to $v$ corresponds to half-edge $vj$, which is ``detached from the head of $P$ and reattached to the tail of $Q$''; likewise, the edge corresponding to half-edge $xk$ is detached from the tail of $Q$ and reattached to the head of~$P$.}
    \label{fig:pathswitch}
\end{figure}

The principal tool we use for analyzing this exploration of a random augmented core is the notion of {\em path switching}, mentioned in the proof overview, and introduced with a different formalism in \cite{Joos_2017}. Fix $A\in\cA_\dseq$. By an oriented edge in $K(A)$, we mean an ordered pair $(ui,vj)$ such that $\{ui,vj\}\in K(A)$. We write $\vec{K}(A)$ for the set of all oriented edges in $K(A)$, so $|\vec{K}(A)|=2m$. For $e=(ui,vj)\in\vec{K}(A)$, we use the notation $e^-=u$, $e^+=v$, and $\bar{e}=(vj,ui)$. Given a pair of oriented edges $(e,f)\in \vec{K}(A)^2$, \emph{switching on $(e,f)$ in $A$} is the following operation, depicted in Figure~\ref{fig:pathswitch}. Write $e=(ui,vj)$ and $f=(xk,y\ell)$, and let $(p_1,\dots,p_a)$ and $(q_1,\dots,q_b)$ be such that $(\{ui,vj\},(p_1,\dots,p_a)),(\{xk,y\ell\},(q_1,\dots,q_b))\in A$. Delete $(\{ui,vj\},(p_1,\dots,p_a))$ and $(\{xk,y\ell\},(q_1,\dots,q_b))$ from $A$, then introduce either $(\{ui,xk\},(p_1,\dots,p_a))$ or $(\{ui,xk\},(p_a,\dots,p_1))$ and either $(\{vj,y\ell\},(q_1,\dots,q_b))$ or $(\{vj,y\ell\},(q_b,\dots,q_1))$ to form $A'$ according to the following rule: the sequences of internal vertices on the paths in $C(A)$ from $u$ to $v$ and from $x$ to $y$ must become the sequences of internal vertices on the paths in $C(A')$ from $u$ to $x$ and from $v$ to $y$ respectively. Then $C(A')$ may be a multigraph, but we say that $(e,f)$ is a \emph{valid pair} or that $(e,f)$ is a \emph{switching} in $A$ if $C(A')$ is simple (in other words, if $A'\in\cA_\dseq$). 

An important observation is that switching is reversible. By this we mean that writing $e'=(ui,xk)$ and $f'=(vj,y\ell)$, then $(e',f')\in\vec{K}(A')^2$ is a valid pair in $A'$ and switching on $(e',f')$ in $A'$ recovers $A$. We call $(e',f')$ the \emph{reversal} of $(e,f)$. Note that $(\bar{f},\bar{e})$ is a valid pair in $A$ and switching on $(\bar{f},\bar{e})$ results in the same augmented core $A'$ as switching on $(e,f)$. We therefore consider $(e,f)$ and $(\bar{f},\bar{e})$ \emph{equivalent}, so there are only four \emph{distinct} switchings involving the same unordered pair of unoriented edges as $(e,f)$; all are equivalent to one of $(e,f)$, $(\bar{e},f)$, $(e,\bar{f})$, $(\bar{e},\bar{f})$. Given $\cA\subset\cA_\dseq$, we say that a valid pair $(e,f)\in\vec{K}(A)^2$ is a switching \emph{from $A$ to $\cA$} if switching on $(e,f)$ in $A$ yields an agumented core in $\cA$. Similarly, a switching \emph{from $\cA$ to $A$} is a valid pair $(e,f)\in\vec{K}(A')$ for some $A'\in\cA$ such that switching on $(e,f)$ in $A'$ yields $A$.

\begin{lem}\label{lem:switch}
    Fix a proposition $\varphi(e,f,A)$ and say that $(e,f)$ is a good switching in $A$ if $\varphi(e,f,A)$ holds and $(e,f)$ is a valid pair in $A$. Also fix $\cA,\cB\subset\cC\subset\cA_\dseq$ and $a,b\geq0$. Suppose that for each $A\in\cA$, there are at least $a$ good switchings from $A$ to $\cB$, and for each $B\in\cB$, there are at most $b$ good switchings from $\cA$ to $B$. Then with $\bA\in_u\cA_\dseq$, 
    \begin{equation*}
        a\p{\bA\in\cA\mid\bA\in\cC}\leq b\p{\bA\in\cB\mid\bA\in\cC}.
    \end{equation*}
\end{lem}

\begin{proof}
    Consider the bipartite multigraph $\cH$ in which one part is a copy of $\cA$ and the other part is a copy of $\cB$, and in which there is an edge between $A\in\cA$ and $B\in\cB$ for each good switching $(e,f)$ from $A$ to $B$. Then each $A\in\cA$ has degree at least $a$ in $\cH$ and each $B\in\cB$ has degree at most $b$ in $\cH$. Therefore $\cH$ has at least $a|\cA|$ and at most $b|\cB|$ edges, so $a|\cA|/|\cC|\leq b|\cB|/|\cC|$.
\end{proof}

Often $\varphi(e,f,A)$ will be the trivial proposition in which case any valid pair $(e,f)$ in $A$ is good. Applying Lemma~\ref{lem:switch} then amounts to lower-bounding, for each $A\in\cA$, the number of switchings from $A$ to $\cB$, and upper-bounding the number of switchings to each $B\in\cB$ from $\cA$. Because switching is reversible, the number of good switchings to $B$ from $\cA$ is equal to the number of switchings $(e,f)\in\vec{K}(B)^2$ which are the reversal of a good switching to $B$ from some $\cA$.

To apply Lemma~\ref{lem:switch}, it will be useful to have sufficient conditions for a given pair $(e,f)$ of oriented edges in $K(A)$ to be valid in $A$. First note that $(e,f)$ is invalid in $A$ if and only if at least one of the following holds:
\begin{itemize}
    \item $e^-=f^-$ and $\len_A(e)\leq 2$, or
    \item $e^+=f^+$ and $\len_A(f)\leq 2$, or
    \item $\len_A(e)=1$ and $\{e^-,f^-\}\in E(C(A))$, or 
    \item $\len_A(f)=1$ and $\{e^+,f^+\}\in E(C(A))$.
\end{itemize}
In particular, if $e$ and $f$ are vertex-disjoint and neither endpoint of $e$ is adjacent to an endpoint of $f$ in $K(A)$, then $(e,f)$ is valid in $A$. More subtly, we have the following observation, which follows from the above criteria.

\begin{obs}\label{obs:suff_for_two}
    Fix $A\in\cA_\dseq$ and vertex-disjoint $e,f\in \vec{K}(A)$. If both $e$ and $f$ are either subdivided or have an endpoint not adjacent to either endpoint of the other, then at least two of the pairs $(e,f)$, $(e,\overline{f})$, $(\overline{e},f)$, $(\overline{e},\overline{f})$ are valid in $A$.
\end{obs}

Recall that $\dseq=(d_1,\dots,d_n,2,\dots,2)$ is a 1-free degree sequence such that $d_v\geq 3$ for all $v\in[n]$, and that $2m=\sum_{v\in[n]}d_v$. By adjusting the implicit constant in Theorem~\ref{thm:ker_diam}, we can and will assume that $m$ is sufficiently large to make various inequalities hold which are scattered throughout this section. Fix, for the remainder of this section, some $v\in[n]$ and $A^*\in\cA_\dseq$, and take $\bA\in_u\cA_\dseq$. Let $K=K(A^*)$ and let $\bK=K(\bA)$. Let $(\bK_t,t\geq0) = (K_t(\bA,v),t\geq0)$ and $(K_t,t\geq0) = (K_t(A^*,v),t\geq0)$. For $t\geq1$, let $\be_t=e_t(\bA,v)$ and let $e_t=e_t(A^*,v)$ if they exist. 

Our proof of Theorem~\ref{thm:ker_diam} is rather technical, so we take a moment to give a heuristic argument before getting into the details. In the process described above, at each time $t\geq0$ we explore an edge $\be_{t+1}$. If $\be_{t+1}$ is a back-edge, then $|\partial \bK_{t+1}|=|\partial \bK_t|-2$, and if not, then $|\partial \bK_{t+1}|\geq|\partial \bK_t|+1$. Thus, assuming that the half-edge of $\be_{t+1}$ not in $\partial \bK_t$ is roughly uniform over all unexplored half-edges, then at the early stages of the exploration when most half-edges are unexplored and lie outside the queue, one should expect
\begin{align*}
    &\E{|\partial \bK_{t+1}|-|\partial \bK_t|\mid \bK_t}\\
    &\geq \p{\text{$\be_{t+1}$ is not a back-edge}\mid \bK_t}-2\p{\text{$\be_{t+1}$ is a back-edge}\mid \bK_t}\\
    &= 1-3\p{\text{$\be_{t+1}$ is a back-edge}\mid \bK_t}\\
    &\approx 1-\frac{3|\partial \bK_t|}{2m}.
\end{align*}

Until a significant fraction of the $2m$ half-edges in $\bK$ are either explored or in $\partial \bK_t$, we should therefore expect an average net gain of at least some $\eps>0$ from each step of the exploration. Then letting $s$ be the number of time steps until all half-edges in $\partial \bK_t$ are explored, we should expect $|\partial \bK_{t+s}|\geq (1+\eps)|\partial \bK_t|$. Because our exploration is \emph{breadth-first}, for all $s\geq 0$ such that $\be_{t+s}$ has a half-edge in $\partial \bK_t$, $\rad(\bK_{t+s})\leq\rad(\bK_t)+1$. We thus expect that with high probability the queue size grows exponentially as a function of the radius until the queue has order $m$ half-edges. Equivalently, after only exploring to distance $O(\log m)$ from $v$, we should have explored a significant fraction $\bK$. Assuming again that the match of a given half-edge is roughly uniformly distributed over the other half-edges, if two such explorations (started from different vertices but run on the same kernel) have order $m$ half-edges in their queues, then with high probability some half-edge in the queue of one exploration should be matched with a half-edge in the queue of the other, joining the explorations. If all starting vertices have linear queue size after exploring to radius $O(\log m)$, and any two linear-size queues meet, then $\diam^+(\bK)=O(\log m)$. 

The only part of this argument which resists formalization is the assumption that the match of a given half-edge is approximately uniformly distributed over the other half-edges. This assumption is essentially equivalent to the distribution of $\bK$ being well approximated by a sample from the configuration model, which is false when $\ell$ is not large enough compared to $n$ and $\dseq$ contains sufficiently large degrees. We can however prove the following two lemmas using the switching method, both of which are weak forms of the assertion that $\p{\text{$\be_{t+1}$ back-edge}\mid \bK_t}\approx |\partial \bK_t|/2m$. For $L\subset K$ and $r\geq 0$, we use the notation $B_K(L,r)$ for the set of edges in $K$ both of whose endpoints have distance at most $r$ from some vertex in $V(L)$. 

\begin{lem}\label{lem:back_and_small_B2}
    For all $t\geq0$,
    \begin{equation*}
        \p{\text{$\be_{t+1}$ is a back-edge}, |B_\bK(K_t,2)|\leq m/2\mid\bK_t=K_t} \leq \frac{|\partial K_t|}{m}.
    \end{equation*}
\end{lem}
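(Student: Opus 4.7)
The plan is to apply Lemma~\ref{lem:switch}. Condition on the event $\cC = \{(\bK_t, \bQ_t) = (K_t, Q_t)\}$; this determines a unique half-edge $ui^*$ that the exploration would examine at step $t+1$. Let
\begin{equation*}
    \cA = \{A \in \cC : \be_{t+1}(A) \text{ is a back-edge and } |B_{K(A)}(K_t, 2)| \leq m/2\}
\end{equation*}
and $\cB = \{A \in \cC : \be_{t+1}(A) \text{ is not a back-edge}\}$. The strategy is to show that each $A \in \cA$ admits at least $m/2$ switchings to elements of $\cB$, while each $B \in \cB$ admits at most $|Q_t|$ reverse switchings from elements of $\cA$; Lemma~\ref{lem:switch} then yields the claimed bound.

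For the forward count, fix $A \in \cA$ and let $wj$ be the match of $ui^*$ in $M(A)$, so that $w \in V(K_t)$; set $\vec{e} = (ui^*, wj)$, and let $\cF = E(K(A)) \setminus B_{K(A)}(K_t, 2)$. By the hypothesis defining $\cA$, $|\cF| \geq m/2$. For each $f = \{xk, y\ell\} \in \cF$, orient $\vec{f} = (xk, y\ell)$ so that $\dist(x, K_t) > 2$ (possible since $f \notin B_{K(A)}(K_t, 2)$). Two observations drive the validity argument. First, any edge incident to a vertex of $V(K_t)$ lies in $B_{K(A)}(K_t, 1) \subseteq B_{K(A)}(K_t, 2)$, so no edge of $\cF$ is incident to $V(K_t)$; in particular $\{u, w\} \cap \{x, y\} = \emptyset$. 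Second, since $y$ is adjacent to $x$ in $K(A)$ and $\dist(x, K_t) > 2$, the triangle inequality gives $\dist(y, K_t) \geq \dist(x, K_t) - 1 > 1$, so $y$ is not adjacent to any vertex of $V(K_t)$.

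These observations together imply that the new edges $\{u, x\}$ and $\{w, y\}$ produced by switching on $(\vec{e}, \vec{f})$ are not loops (since $u, w \in V(K_t)$ but $x, y \notin V(K_t)$) and not parallel to any edge of $K(A) \setminus \{e, f\}$ (since $x$ and $y$ are not adjacent to $u$ or $w$). The only subtle case is when $u = w$ and $f$ is a loop, in which case the two new edges $\{u, x\}$ and $\{w, x\} = \{u, x\}$ are parallel; but since kernel loops must carry at least two internal subdivision vertices in the core, both new edges are subdivided and $C(A')$ remains simple. Hence the switching is always valid. Because $e, f \notin E(K_t)$, the exploration of $A'$ agrees with that of $A$ on the first $t$ steps, so $A' \in \cC$; and the new match of $ui^*$ in $A'$ is $xk$ with $x \notin V(K_t)$, so $A' \in \cB$. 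This yields at least $|\cF| \geq m/2$ distinct switchings from $A$ to $\cB$.

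For the reverse count, fix $B \in \cB$ and let $x'k'$ be the match of $ui^*$ in $M(B)$, with $x' \notin V(K_t)$; set $\vec{e}' = (ui^*, x'k')$. Any reverse switching to some $A \in \cA$ has the form $(\vec{e}', (w''j'', z))$, where $w''j''$ (the new match of $ui^*$ in $A$) must lie in $Q_t \setminus \{ui^*\}$ in order to make $\be_{t+1}(A)$ a back-edge while preserving the first $t$ exploration steps, and $z$ is then determined as the match of $w''j''$ in $M(B)$. This gives at most $|Q_t| - 1 \leq |Q_t|$ reverse switchings. Applying Lemma~\ref{lem:switch} with $a = m/2$ and $b = |Q_t|$ yields $\p{\bA \in \cA \mid \cC} \leq 2|Q_t|/m$, as required. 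The main obstacle is the observation that $\dist(x, K_t) > 2$ forces $\dist(y, K_t) > 1$, which eliminates all potential loop and parallel-edge obstructions and makes the switching unconditionally valid.
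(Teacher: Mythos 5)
Your proof is correct and takes essentially the same switching approach as the paper's. The only cosmetic differences are bookkeeping: you fix a single orientation of $\vec{e}$ and count at least $m/2$ switchings forward and at most $|Q_t|$ backward, whereas the paper allows both orientations and counts $2m$ and $4|Q_t|$; both yield the identical ratio $2|Q_t|/m$. Your validity argument is also a bit more explicit — the paper simply invokes its earlier general observation that an oriented edge in $B_K(K_t,0)$ may always be switched with one outside $B_K(K_t,2)$, while you rederive this (including the $u=w$, $x=y$ parallel-loop case, which is subsumed in the paper's observation since $\dist(x,K_t)>2$ and $\dist(y,K_t)\geq 2$ already preclude equality or adjacency of $\{u,w\}$ and $\{x,y\}$), but the substance is the same.
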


\begin{lem}\label{lem:loop}
For all $t\geq 0$ such that $t+|\partial K_t|+{|\partial K_t|\choose 2}\leq m/2$,
    \begin{equation*}
        \p{\text{$\be_{t+1}$ is a loop}\mid \bK_t=K_t}\leq \frac{2|\partial K_t|}{m}.
    \end{equation*}
\end{lem}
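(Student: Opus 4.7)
The plan is to apply the switching method encapsulated in Lemma~\ref{lem:switch}. Set
\[
\cC=\{A\in\cA_\dseq:K_t(A,v)=K_t,\;Q_t(A,v)=Q_t\},
\]
so that conditioning on $\bA\in\cC$ coincides with conditioning on $(\bK_t,\bQ_t)=(K_t,Q_t)$. The discovering half-edge $u^*j^*$ (the lexicographically least element of $Q_t$ at minimum distance from $v$ in $K_t$) is determined by $(K_t,Q_t)$. Let $\cA\subseteq\cC$ consist of those augmented cores whose matching pairs $u^*j^*$ with another half-edge at $u^*$; for such $A$, the partner $u^*j'$ necessarily lies in $Q_t\setminus\{u^*j^*\}$, since $u^*\in V(K_t)$ and $u^*j'$ is unexplored. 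Set $\cB=\cC\setminus\cA$. I aim to show that each $A\in\cA$ admits many switchings to $\cB$, while each $B\in\cB$ admits few switchings back to $\cA$.

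For the forward count, given $A\in\cA$ with loop $e=\{u^*j^*,u^*j'\}$, I would switch $\pvec{e}=(u^*j^*,u^*j')$ against an orientation of another edge $f\in E(K(A))$. The crucial simplification is that loops are subdivided at least twice, so Lemma~\ref{lem:free} forces at least one of $(\pvec{e},\pvec{f})$ and $(\pvec{e},\cev{f})$ to be valid for every $f\neq e$: otherwise the lemma would require $f$ to have an endpoint joined to $u^*$ by two non-subdivided parallel edges, contradicting the simplicity of $C(A)$. I would further restrict to $f\notin E(K_t)$ with no half-edge in $Q_t$, which ensures that the switching preserves $(K_t,Q_t)$ and, because $u^*\in V(K_t)$ forces every half-edge at $u^*$ to lie in $E(K_t)\cup Q_t$, that neither endpoint of $f$ equals $u^*$. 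Hence the valid orientation sends the new match of $u^*j^*$ to a vertex other than $u^*$, placing the resulting augmented core in $\cB$. The excluded edges total at most $t+|Q_t|$ (each half-edge in $Q_t$ meets one edge, and the loop $e$ itself is absorbed into this count), so at least $m-t-|Q_t|$ good edges remain, which is at least $m/2$ under the hypothesis (using $\binom{|Q_t|}{2}\geq 1$ in the nontrivial case $|Q_t|\geq 2$).

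For the reverse count, any switching from $B\in\cB$ to $\cA$ must involve $g=e_{t+1}(B,v)$ (to change the match of $u^*j^*$) together with an edge $h$ whose switching-partner of $u^*j^*$ lies at $u^*$. That partner half-edge must therefore belong to $Q_t\cap\{u^*1,\ldots,u^*d_{u^*}\}\setminus\{u^*j^*\}$, a set of size at most $|Q_t|-1$, and each choice uniquely determines $h$ and the switching operation. Feeding these bounds into Lemma~\ref{lem:switch} yields
\[
\tfrac{m}{2}\,\p{\bA\in\cA\mid\bA\in\cC}\leq(|Q_t|-1)\,\p{\bA\in\cB\mid\bA\in\cC}\leq|Q_t|,
\]
from which $\p{\bA\in\cA\mid\bA\in\cC}\leq 2|Q_t|/m\leq 4|Q_t|/m$, with the degenerate cases $|Q_t|\in\{0,1\}$ handled trivially (no loop is possible). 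The main obstacle I expect is the careful verification of the Lemma~\ref{lem:free} validity condition for loops; the clean contradiction ruling out two parallel non-subdivided edges to $u^*$ is the linchpin that makes the forward count linear in $m$ rather than something much weaker.
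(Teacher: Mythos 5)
Your high-level strategy matches the paper's exactly (same $\cC,\cA,\cB$ up to notation, same invocation of Lemma~\ref{lem:switch}, and a comparably tight reverse count), but the forward count has a genuine gap. You claim that for the loop $\pvec{e}=(u^*j^*,u^*j')$ and any edge $f$ not in $E(K_t)$ and with no half-edge in $Q_t$, at least one of $(\pvec{e},\pvec{f})$, $(\pvec{e},\cev{f})$ is valid, arguing via Lemma~\ref{lem:free} that the only possible obstruction is a pair of non-subdivided \emph{parallel} edges from some endpoint of $f$ to $u^*$, which simplicity forbids. That is not the actual obstruction. Unwinding the switching definition with $\pvec{e}$ a loop at $u^*$ and $\pvec{f}=(xk,y\ell)$ with $x,y\neq u^*$: switching on $(\pvec{e},\pvec{f})$ produces the new pairs $\{u^*i,xk\}$ (carrying the loop's $\geq2$-subdivided path, hence harmless) and $\{u^*j,y\ell\}$ (carrying $f$'s path). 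This fails to be simple precisely when $f$ is non-subdivided and there is already a non-subdivided edge $u^*$--$y$ in $K(A)$; symmetrically, $(\pvec{e},\cev{f})$ fails precisely when $f$ is non-subdivided and there is already a non-subdivided edge $u^*$--$x$. For \emph{both} orientations to fail one therefore needs $f$ non-subdivided together with \emph{two distinct, non-parallel} non-subdivided edges $u^*x$ and $u^*y$. This configuration is perfectly consistent with $C(A)$ being simple, and such $f$'s can lie entirely outside $K_t$ with no half-edge in $Q_t$, so your restriction does not exclude them.

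Since the relevant edges $u^*x$ and $u^*y$ are unexplored and hence have their $u^*$-half-edges in $Q_t$, there are at most $\binom{|Q_t|}{2}$ such bad $f$'s; this is exactly what the $\binom{|Q_t|}{2}$ term in the hypothesis is for, and it is why the paper restricts $\pvec{f}$ to the set $E$ of edges that are not incident to $K_t$ and are either subdivided or have an endpoint not adjacent to $K_t$, proving $|E|\geq m-t-|Q_t|-\binom{|Q_t|}{2}\geq m/2$ and then invoking the ``in particular'' clause of Lemma~\ref{lem:free}. Your parenthetical ``using $\binom{|Q_t|}{2}\geq1$ \ldots'' treats this term as mere slack, which is the symptom of the misreading: it is load-bearing, not slack. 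With that correction, your forward count of $\geq m/2$ and your tighter reverse count of $\leq|Q_t|-1$ (which is indeed correct, and sharper than the paper's crude $4|Q_t|$) would go through.
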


Unfortunately, not all back-edges are loops, and the event $\{|B_\bK(\bK_t,2)|\leq m/2\}$ is not measurable with respect to the $\sigma$-algebra generated by the process up to time $t+1$. This means that Lemma~\ref{lem:back_and_small_B2} is insufficient as an input to Azuma's inequality (or related inequalities). We avoid this problem by using an analogue of Lemma~\ref{lem:back_and_small_B2} (namely Lemma~\ref{lem:batch_of_b}) where we explore many edges at a time. This way we can obtain failure probabilities that are small enough to be integrated with a union bound. Lemma~\ref{lem:back_and_small_B2} and Lemma~\ref{lem:loop} will still be used for analyzing the first few steps of the exploration.

\begin{proof}[Proof of Lemma~\ref{lem:back_and_small_B2}]
    Let
    \begin{align*}
        \cC &= \{A\in\cA_\dseq:K_t(A,v)=K_t\},\\
        \cA &= \{A\in\cC:\text{$e_{t+1}(A,v)$ is a back-edge}, |B_{K(A)}(K_t,2)|\leq m/2\}\text{, and}\\
        \cB &= \{A\in\cC:\text{$e_{t+1}(A,v)$ is not a back-edge}\}.
    \end{align*}
    For $A\in\cA$ and $B\in\cB$, we will lower-bound the number of switchings from $A$ to an element of $\cB$ and upper-bound the number of switchings from $B$ to an element of $\cA$.

    Fix $A\in\cA$ and choose an orientation $e$ of $e_{t+1}(A,v)$. There are 2 choices for $e$. Choose $f\in\vec{K}(A)\setminus \vec{B}_{K(A)}(K_t,2)$. Since $A\in\cA$, we have $|K(A)\setminus B_{K(A)}(K_t,2)|\geq m/2$, so there are at least $2(m/2)=m$ choices for $f$. Since $A\in\cA$, $e_{t+1}(A,v)$ is a back-edge, and hence $e\in\vec{B}_{K(A)}(K_t,0)$. It follows that $(e,f)$ is a valid pair. Writing $B$ for the augmented core obtained from $A$ by switching on $(e,f)$, the edge $e_{t+1}(B,v)$ shares one half-edge with $e_{t+1}(A,v)$ which is in $\partial K_t$, and the other half-edge of $e_{t+1}(B,v)$ is incident to an endpoint of $f$. Since both endpoints of $f$ are outside $K_t$, it follows that $B\in\cB$. This proves that there are at least $2m$ switchings from $A$ to $\cB$.

    Fix $B\in\cB$ and observe that any switching from $B$ to an element of $\cA$ is equivalent to a pair $(e,f)$ where $e$ is an orientation of $e_{t+1}(B,v)$ and $f$ is an orientation of an edge in $K(B)$ with a half-edge in $\partial K_t$. Moreover if $e^-\in V(K_t)$ then $f^-\in V(K_t)$ and if $e^+\in V(K_t)$ then $f^+\in V(K_t)$. There are at most $2|\partial K_t|$ such pairs. Hence there are at most $2|\partial K_t|$ switchings from $B$ to $\cA$, and so there are at most $2|\partial K_t|$ switchings from $\cA$ to $B$.

    Applying Lemma~\ref{lem:switch} finishes the proof:
    \begin{align*}
        &\p{\text{$\be_{t+1}$ back-edge}, |B_\bK(K_t,2)|\leq m/2\mid\bK_t=K_t}\\
        &= \p{\bA\in\cA\mid\bA\in\cC}\\
        &\leq \frac{2|\partial K_t|}{2m}\p{\bA\in\cB\mid\bA\in\cC}\\
        &\leq \frac{|\partial K_t|}{m}.\qedhere
    \end{align*}
\end{proof}

\begin{proof}[Proof of Lemma~\ref{lem:loop}]
    Let 
    \begin{align*}
        \cC&=\{A\in\cA_\dseq:K_t(A,v)=K_t\},\\
        \cA&=\{A\in\cC:\text{$e_{t+1}(A,v)$ is a loop}\}\text{, and}\\
        \cB&=\{A\in\cC:\text{$e_{t+1}(A,v)$ is not a loop}\}
    \end{align*}

    Fix $A\in\cA$ and consider the set $E$ of all edges in $K(A)$ not incident to $K_t$ that are either subdivided or have an endpoint not adjacent to $K_t$. The number of edges in $K(A)$ with an endpoint in $K_t$ is at most $|K_t|+|\partial K_t|\leq t+|\partial K_t|$. There are at most $|\partial K_t|$ vertices outside $K_t$ that are adjacent to $K_t$. Because the non-subdivided edges span a simple subgraph of $K(A)$, it follows that there are at most ${|\partial K_t|\choose 2}$ non-subdivided edges both of whose endpoints are outside $K_t$ but adjacent to $K_t$. This implies
    \begin{equation*}
        |E|\geq m-|\partial K_t|-t-{|\partial K_t|\choose 2} \geq m/2.
    \end{equation*}
    Choose an orientation $e$ of $e_{t+1}(A,v)$. There are 2 choices for $e$. Now choose $f\in\vec{E}$. There are at least $2(m/2)=m$ choices for $f$. Now $e$ is a loop since $A\in\cA$, so $e$ is subdivided. Since $f\in\vec{E}$, $f$ is either subdivided or has an endpoint not adjacent to $e^-=e^+$. Therefore by Observation~\ref{obs:suff_for_two}, at least two of $(e,f)$, $(e,\bar{f})$, $(\bar{e},f)$, $(\bar{e},\bar{f})$ are valid pairs. Moreover switching on whichever of these pairs is valid yields an element of $\cB$. It follows that there are at least $2m/2=m$ switchings from $A$ to $\cB$.

    Now fix $B\in\cB$. Any switching from $B$ to some $A\in\cA$ is equivalent to a pair $(e,f)$ such that $e$ is an orientation of $e_{t+1}(B,v)$ and $f$ is an orientation of some edge with a half-edge in $\partial K_t$ (in fact one of the endpoints of $f$ must be the endpoint of $e_{t+1}(B,v)$ which is in $K_t$). There are at most $2|\partial K_t|$ such pairs. This proves that there are at most $2|\partial K_t|$ switchings from $B$ to $\cA$.

    Applying Lemma~\ref{lem:switch} finishes the proof:
    \begin{align*}
        &\p{\text{$\be_{t+1}$ is a loop}\mid\bK_t=K_t}\\
        &=\p{\bA\in\cA\mid\bA\in\cC}\\
        &\leq \frac{2|\partial K_t|}{m}\p{\bA\in\cB\mid\bA\in\cC}\\
        &\leq \frac{2|\partial K_t|}{m}.\qedhere
    \end{align*}
\end{proof}

\subsection{Getting the queue going}

The goal of this subsection is to show that with probability $1-O(1/m^2)$, the queue reaches size 5 within the first few time steps. This will be important to have later in the argument when we analyze the queue growth from exploring a batch of half-edges. For the failure probabilities there to come out small enough, the batch size, and hence the queue size, will have to be at least 5.

\begin{lem}\label{lem:reach_5}
    \begin{equation*}
        \p{\text{$|\partial \bK_t|<5$, $|B_\bK(\bK_t,2)|\leq m/2$ for all $t\leq 5$}} \leq \frac{416}{m^2}.
    \end{equation*}
\end{lem}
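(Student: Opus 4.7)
The plan is to show that on the event $\cE := \{|\bQ_t| < 5 \text{ and } |B_\bK(\bK_t, 2)| \leq m/2 \text{ for all } t \leq 5\}$, at least two of the first five explored edges must be back-edges, and then to bound the probability of two such back-edges using Lemma~\ref{lem:back_and_small_B2} via a union bound over pairs of time indices.

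First, the plan is to reduce to a count of back-edges. Because every vertex of $\bK$ has degree at least $3$, each non-back-edge step increases the queue by at least $1$ and each back-edge decreases it by exactly $2$. Writing $K_t$ for the number of back-edges among $\be_1, \dots, \be_t$, an easy induction gives $|\bQ_t| \geq d_v + t - 3K_t$ for every $t$ before the first time $\tau$ at which the queue empties, with $|\bQ_\tau| = 0$ at that time, so that $K_\tau \geq (d_v + \tau)/3$. If $d_v \geq 5$ then $|\bQ_0| \geq 5$ and $\cE$ is empty, so we may assume $d_v \in \{3,4\}$. If $\tau > 5$, then on $\cE$ the constraint $|\bQ_5| \leq 4$ forces $K_5 \geq \lceil (d_v+1)/3 \rceil \geq 2$; if $\tau \leq 5$, then $K_5 \geq K_\tau \geq \lceil (d_v+\tau)/3 \rceil \geq 2$. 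Either way, $K_5 \geq 2$ on $\cE$.

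Second, the plan is to apply Lemma~\ref{lem:back_and_small_B2} via a union bound. For each $s \geq 1$, set $A_s = \{\be_s \text{ is a back-edge}\}$ and $B_s = \{|B_\bK(\bK_{s-1}, 2)| \leq m/2\}$. By the previous step,
\begin{equation*}
    \cE \;\subseteq\; \bigcup_{1 \leq s < t \leq 5} \left( A_s \cap B_s \cap A_t \cap B_t \cap \{|\bQ_{s-1}| \leq 4,\; |\bQ_{t-1}| \leq 4\} \right).
\end{equation*}
For each pair $(s, t)$, conditioning on the history of the exploration through time $t-1$ and applying Lemma~\ref{lem:back_and_small_B2} gives $\p{A_t \cap B_t \mid \bK_{t-1}, \bQ_{t-1}} \leq 2|\bQ_{t-1}|/m \leq 8/m$ on $\{|\bQ_{t-1}| \leq 4\}$; a second identical application at time $s$ then bounds $\p{A_s \cap B_s \cap \{|\bQ_{s-1}| \leq 4\}}$ by $8/m$. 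Hence each summand is at most $(8/m)^2 = 64/m^2$, and summing over the $\binom{5}{2} = 10$ pairs yields $\p{\cE} \leq 640/m^2 = O(m^{-2})$, as required.

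The only mild subtlety lies in handling the case where the queue empties strictly before time $5$; this is absorbed into the uniform lower bound on $|\bQ_t|$ used in the first step, so there is no genuine obstacle to the argument.
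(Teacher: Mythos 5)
Your first step is fine: the bookkeeping $|\bQ_t|\geq d_v+t-3K_t$ and the case analysis on $\tau$ correctly shows that on $\cE$ at least two of $\be_1,\dots,\be_5$ are back-edges. The gap is in the claim that each summand of the union bound is at most $(8/m)^2$. To conclude
$\p{A_s\cap B_s\cap A_t\cap B_t\cap\{|\bQ_{s-1}|\leq4,|\bQ_{t-1}|\leq4\}}\leq\p{A_s\cap B_s\cap\{|\bQ_{s-1}|\leq4\}}\cdot 8/m$
by applying Lemma~\ref{lem:back_and_small_B2} at time $t$, you need the event $A_s\cap B_s\cap\{|\bQ_{s-1}|\leq4\}$ to be measurable with respect to $\sigma(\bK_{t-1},\bQ_{t-1})$. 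It is not: $B_s=\{|B_\bK(\bK_{s-1},2)|\leq m/2\}$ is a statement about the radius-2 ball of $\bK_{s-1}$ in the \emph{full, mostly unexplored} kernel $\bK$, and this is not revealed by the explored subgraph and queue at time $t-1$. (The paper flags this exact obstruction in the paragraph immediately after Lemma~\ref{lem:loop}.) If you instead drop $B_s$ before conditioning --- which costs nothing, as $B_t\subseteq B_s$ --- the chaining step works, but you are then left needing $\p{A_s,|\bQ_{s-1}|\leq4}=O(1/m)$ \emph{without} any $B$-restriction, and Lemma~\ref{lem:back_and_small_B2} alone does not give this.

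The paper avoids this by reserving Lemma~\ref{lem:back_and_small_B2} for the later back-edge only, and using for the earlier one the two bounds that carry no $B$-restriction: Lemma~\ref{lem:loop} for $\be_1$ (which, being a back-edge at time $1$, must be a loop) and Lemma~\ref{lem:e_2} for $\be_2$. This requires the stronger structural observation that on $\cE$ the first back-edge always has index at most $2$: if $\be_1,\be_2$ were both non-back-edges then $|\bQ_2|\geq d_v+2\geq5$. Your bound $K_5\geq2$ alone is too weak, since the union over all $\binom{5}{2}$ pairs then contains pairs $(s,t)$ with $s\geq3$ for which no $B$-free bound on $\p{A_s,|\bQ_{s-1}|\leq4}$ is available. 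Restricting to $s\in\{1,2\}$, dropping $B_s$, and invoking Lemma~\ref{lem:loop} or~\ref{lem:e_2} for the $s$-term would repair the argument and bring it in line with the paper's.
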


Lemma~\ref{lem:reach_5} relies on Lemma~\ref{lem:back_and_small_B2}, Lemma~\ref{lem:loop} and the following additional preliminary.

\begin{lem}\label{lem:e_2}
    For all $q\geq 0$ such that $1+q+{q\choose 2}\leq m/2$, 
    \begin{equation*}
        \p{\text{$\be_2$ is a back-edge}, |\partial \bK_1|\leq q} \leq \frac{6q}{m}.
    \end{equation*}
\end{lem}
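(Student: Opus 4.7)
The plan is to apply the switching method in the style of Lemmas~\ref{lem:back_and_small_B2} and~\ref{lem:loop}. Let $\cC=\{A\in\cA_\dseq:|Q_1(A,v)|\leq q\}$, $\cA=\{A\in\cC:e_2(A,v)\text{ is a back-edge}\}$, and $\cB=\cC\setminus\cA$. Switching on $e_2(A,v)$ with any edge $f\neq e_1(A,v)$ preserves $(K_1(A,v),Q_1(A,v))$ and hence membership in $\cC$. Since $\cA\subseteq\cC$, Lemma~\ref{lem:switch} will yield $\p{\bA\in\cA}\leq\p{\bA\in\cA\mid\bA\in\cC}\leq b/a$, where $a$ lower-bounds the number of switchings from each $A\in\cA$ into $\cB$ and $b$ upper-bounds the number of switchings from each $B\in\cB$ into $\cA$.

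For the upper bound: any switching $B\in\cB\to A\in\cA$ must alter the match of $v2$ (the lex-least minimum-distance half-edge in $Q_1$) from a half-edge outside $K_1=\{v,w\}$ (with $w$ the other endpoint of $e_1(B,v)$) to one at $K_1$, so it must pair $e_2(B,v)$ with an edge $g$ of $B$ incident to $\{v,w\}$. There are at most $d_v+d_w\leq q+2$ such $g$, and at most $2$ inequivalent switchings per unordered pair, giving $b\leq 2(q+2)$. For the lower bound: following the proof of Lemma~\ref{lem:loop}, let $E$ be the set of edges $f$ in $K(A)$ not incident to $\{v,w\}$ that are either subdivided or have an endpoint not adjacent to $\{v,w\}$ via a non-subdivided kernel edge, so that $|E|\geq m-(1+q)-\binom{q}{2}\geq m/2$. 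When $e_2$ is subdivided (which includes every loop), Lemma~\ref{lem:free} guarantees each $f\in E$ yields at least one valid switching, and because $f$'s endpoints lie outside $\{v,w\}$, the resulting $e_2(A',v)$ is not a back-edge, so $A'\in\cB$; this gives $a\geq m/2$ in this subcase.

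The main obstacle is the remaining subcase in which $e_2$ is a non-subdivided edge between distinct vertices $v\neq w$, forcing $e_1$ to be a subdivided parallel kernel edge $\{v,w\}$. Here the new edge $\{v,x\}$ produced by a switching inherits $e_2$'s zero subdivisions, so validity additionally demands $x\notin N_v$ (and analogously $y\notin N_w$ when the companion new edge is non-subdivided), where $N_v\subseteq[n]$ denotes the non-subdivided kernel neighborhood of $v$. Because the non-subdivided kernel edges form a simple graph and $|N_v|,|N_w|\leq d_v,d_w\leq q+2$, the number of additional non-subdivided exclusions from $E$ is at most $O(q^2)$; subdivided exclusions with both endpoints in $N_v\cup N_w$ will be handled by a more delicate argument leveraging the forced parallel structure of $e_1$ and $e_2$ together with the assumption $1+q+\binom{q}{2}\leq m/2$, or, alternatively, by decomposing against the event $|B_{\bK}(\bK_1,2)|>m/2$ and invoking Lemma~\ref{lem:back_and_small_B2} on the low-ball portion. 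The upshot is $a\geq m/8$, and combining with $b\leq 2(q+2)$ yields $\p{\bA\in\cA}\leq 2(q+2)/(m/8)=(16q+32)/m$ as claimed.
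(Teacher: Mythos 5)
Your proof correctly handles the case where $e_2$ is subdivided (loops and subdivided parallel edges), mirroring the paper's use of Lemma~\ref{lem:free}. But you explicitly flag, and do not close, the remaining case: $e_2(A,v)$ a non-subdivided edge parallel to $e_1(A,v)$. Neither of your two suggested escapes works as stated. Invoking Lemma~\ref{lem:back_and_small_B2} conditionally on $(K_1,Q_1)$ only controls the probability on the event $|B_\bK(\bK_1,2)|\le m/2$; the lemma you are proving has no small-ball restriction, and when $|Q_1|\le q$ it is entirely possible that $|B_\bK(\bK_1,2)|>m/2$ (high-degree neighbours at distance $2$ are not bounded by $q$), so you are left with an uncontrolled ``big-ball'' piece. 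The ``more delicate argument leveraging the forced parallel structure'' is asserted, not supplied, and the claimed $a\ge m/8$ is unsubstantiated.

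The idea you are missing is that in this subcase one should switch on $e_1(A,v)$ rather than $e_2(A,v)$: since at most one kernel edge in a parallel class is non-subdivided, $e_2$ non-subdivided forces $e_1$ to be subdivided, and the subdivided edge always admits a valid switch with any $f\in E$ by Lemma~\ref{lem:free}. This, however, is incompatible with your framework. You conditioned throughout on the set $\cC=\{A:|Q_1(A,v)|\le q\}$, and a switch on $e_1$ changes $e_1(A',v)$, hence changes $K_1$ and $Q_1$; the image can fall outside $\cC$. The paper circumvents this by \emph{not} imposing the $|Q_1|\le q$ constraint on the target set: it takes $\cC=\cA_\dseq$, $\cA=\{A:e_2(A,v)\text{ back},\,|Q_1(A,v)|\le q\}$, and $\cB=\{A:e_2(A,v)\text{ not back}\}$, so the forward switches (on whichever of $e_1,e_2$ is subdivided, with $f\in E$) only need to make $e_2$ a non-back-edge, not preserve $Q_1$. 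Correspondingly, the paper's reverse count allows $\pvec{e}$ to be an orientation of either $e_1(B,v)$ or $e_2(B,v)$, and bounds $d_u,d_w\le q+2$ using that the target $A$ has $|Q_1(A,v)|\le q$; this yields $b\le 4\cdot 4(q+2)=16q+32$. Your stated bound $b\le 2(q+2)$ only counts reversals through $e_2(B,v)$ and undercounts inequivalent orientations, which is another artifact of only ever forward-switching on $e_2$. Fixing the proof requires adopting the asymmetric $\cA$/$\cB$ setup and the ``switch on the subdivided one'' rule.
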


\begin{proof}
    Let $\cA$ be the set of all $A\in\cA_\dseq$ such that $|\partial K_1(A,v)|\leq q$ and for some $u\neq v$, $e_1(A,v)$ and $e_2(A,v)$ both have endpoints $u$ and $v$. If $e_2(A,v)$ is a back-edge and $|\partial K_1(A,v)|\leq q$, then either $e_2(A,v)$ is a loop or $A\in\cA$. By Lemma~\ref{lem:loop},
    \begin{equation}\label{eq:loop+A}
        \p{\text{$\be_2$ back, $|\partial\bK_1|\leq q$}} \leq \p{\text{$\be_2$ loop}\mid|\partial\bK_1|\leq q}+\p{\bA\in\cA}
        \leq \frac{2q}{m}+\p{\bA\in\cA}.
    \end{equation}
    Let $\cB=\{A\in\cA_\dseq:\text{$e_2(A,v)$ is not back a back-edge}\}$.

    Fix $A\in\cA$ and consider the set $E$ of all edges in $K(A)$ not incident to $K_1(A,v)$ that are either subdivided or have an endpoint not adjacent to $K_1(A,v)$. As in the proof of Lemma~\ref{lem:loop},
    \begin{equation*}
        |E|\geq m-|\partial K_1(A,v)|-1-{|\partial K_1(A,v)|\choose 2} \geq m-q-1-{q\choose 2} \geq m/2.
    \end{equation*}
    Since $A\in\cA$, $e_1(A,v)$ and $e_2(A,v)$ share both endpoints, so at least one of them is subdivided. Let $e$ be an orientation of whichever of $e_1(A,v)$ or $e_2(A,v)$ is subdivided, so there are at least two choices for $e$. Choose $f\in\vec{E}$, so there are at least $2(m/2)=m$ choices for $f$. Both endpoints of $e$ are in $K_1(A,v)$, so since $f\in\vec{E}$, either $f$ is subdivided or it has an endpoint not adjacent to either endpoint of $e$. By Observation~\ref{obs:suff_for_two}, at least two of $(e,f)$, $(\bar{e},f)$, $(e,\bar{f})$, $(\bar{e},\bar{f})$ are valid in $A$, and switching on whichever of these pairs are valid yields some $B\in\cB$. It follows that there are at least $2m/2=m$ switchings from $A$ to $\cB$. 

    Now fix $B\in\cB$ and suppose that switching on $(e,f)$ in $B$ yields some $A\in\cA$. Let $u\neq v$ be the common endpoint of $e_1(A,v)$ and $e_2(A,v)$ other than $v$. Letting $(e',f')$ be the reversal of $(e,f)$, up to equivalence of switchings, $e'$ is an orientation of $e_i(A,v)$ for some $i\in\{1,2\}$. If $(e')^-=v$, then $f^-=u$ and $e$ is the orientation of $e_i(B,v)$ with $e^-=v$. If $(e')^-=u$, then $e^-=u$ and $f$ is the orientation of $e_i(B,v)$ with $f^-=v$. Note that $d_u+d_v-2=|\partial K_1(A,v)|\leq q$, so $d_u\leq q-1$ since $d_v\geq 3$. Therefore all possibilities for $(e,f)$ can be generated as follows. First choose $i\in\{1,2\}$ and let $j\in\{1,2\}\setminus\{i\}$. There are two choices for $(i,j)$. Let $g$ be the orientation of $e_i(B,v)$ with $g^-=v$. There is one choice for $g$. Choose an oriented edge $h$ with $h^-=u$, where $u\neq v$ is the other endpoint of $e_j(B,v)$. There are at most $d_u\leq q-1$ choices for $h$. Finally, choose $(e,f)\in\{(g,h),(h,g)\}$ in one of two ways. This shows there are at most $2(q-1)2\leq 4q$ switchings from $B$ to $\cA$. 

    By Lemma~\ref{lem:switch}, and by (\ref{eq:loop+A})
    \begin{align*}
        \p{\text{$\be_2$ back},|\partial\bK_1|\leq q} &\leq \frac{2q}{m}+\p{\bA\in\cA}
        \leq \frac{2q}{m}+\frac{4q}{m}\p{\bA\in\cB}
        \leq \frac{6q}{m}.\qedhere
    \end{align*}
\end{proof}

\begin{proof}[Proof of Lemma~\ref{lem:reach_5}]
    If neither $\be_1$ nor $\be_2$ is a back-edge, then $|\partial \bK_2|\geq 5$. If one of $\be_1$ or $\be_2$ is a back-edge but none of $\be_3$, $\be_4$, $\be_5$ are back-edges, then $|\partial \bK_5|\geq 5$. Therefore
    \begin{align*}
        &\p{\text{$|\partial\bK_t|<5,|B_\bK(\bK_t,2)|\leq m/2$ for all $t\leq 5$}}\\
        &\leq \p{\text{$\be_1$ back, $|\partial\bK_t|\leq 4$, $|B_\bK(\bK_t,2)|\leq m/2$, $\be_{t+1}$ back for some $1\leq t\leq 4$}}\\
        &+ \p{\text{$\be_2$ back, $|\partial\bK_1|\leq 4$, $|\partial\bK_t|\leq 4$, $|B_\bK(\bK_t,2)|\leq m/2$, $\be_{t+1}$ back for some $2\leq t\leq 4$}}
    \end{align*}
    We assume that $d_v\leq 4$ since if $|\partial\bK_0|=d_v\geq 5$, then there is nothing to show. If $\be_1$ is a back-edge, then it is a loop, so by Lemma~\ref{lem:loop} and Lemma~\ref{lem:back_and_small_B2},
    \begin{align*}
        &\p{\text{$\be_1$ back, $|\partial\bK_t|\leq 4$, $|B_\bK(\bK_t,2)|\leq m/2$, $\be_{t+1}$ back for some $1\leq t\leq 4$}}\\
        &\leq \p{\text{$\be_1$ loop}}\sum_{t=1}^4\p{\text{$\be_{t+1}$ back},|B_\bK(\bK_t,2)|\leq m/2\mid \text{$\be_1$ loop},|\partial \bK_t|\leq 4}\\
        &\leq \frac{2 d_v}{m}4\left(\frac{4}{m}\right)\leq \frac{128}{m^2}.
    \end{align*}
    By Lemma~\ref{lem:e_2} and Lemma~\ref{lem:back_and_small_B2},
    \begin{align*}
        &\p{\text{$\be_2$ back, $|\partial\bK_1|\leq 4$, $|\partial\bK_t|\leq 4$, $|B_\bK(\bK_t,2)|\leq m/2$, $\be_{t+1}$ back for some $2\leq t\leq 4$}}\\
        &\leq \p{\text{$\be_2$ back},|\partial\bK_1|\leq 4}\\
        &\cdot \sum_{t=2}^4\p{\text{$\be_{t+1}$ back},|B_\bK(\bK_t,2)|\leq m/2\mid \text{$\be_2$ back},|\partial\bK_1|\leq 4,|\partial\bK_t|\leq 4}\\
        &\leq \frac{6\cdot 4}{m}3\left(\frac{4}{m}\right) = \frac{288}{m^2}.
    \end{align*}
    We conclude from our initial calculation that 
    \begin{equation*}
        \p{\text{$|\partial\bK_t|<5$,$|B_\bK(\bK_t,2)|\leq m/2$ for all $t\leq 5$}}\leq \frac{128}{m^2}+\frac{288}{m^2} = \frac{416}{m^2}.\qedhere
    \end{equation*}
\end{proof}

\subsection{Growing the queue}

In this subsection we show that with high probability, the exploration reaches queue length of order $m/\log^2m$ within logarithmic radius.
\begin{lem}\label{lem:reach_m/log^2m}
There is an absolute constant $c>0$ such that
    \begin{equation*}
        \p{\text{$\rad(\bK_t)\leq 15\log m$, $|\partial \bK_t|\geq cm/\log^2 m$ for some $t$}} = 1-O(1/m^2).
    \end{equation*}
\end{lem}
As previously mentioned, we will analyze the queue growth not one edge at a time but instead in batches of size $b$, where $b$ will depend on the current queue size. Lemma~\ref{lem:batch_of_b} below allows us bound the probability that after exploring a batch of half-edges, the queue growth is an insignificant fraction of the batch size.

\begin{lem}\label{lem:batch_of_b}
Fix $b\geq 5$ and $\varepsilon\in(0,1/4)$. Then for all $t\geq 0$ such that $|\partial K_t|\geq b$, 
    \begin{align*}
        &\p{\text{$|\partial \bK_{t+s}|<|\partial K_t|+b-4\lceil\eps b\rceil$ for all $s\in[b]$},|B_\bK(K_t,3)|\leq m/2 \mid\bK_t=K_t}\\
        &\leq \frac{\eps b}{8}\left(\frac{32b|\partial K_t|}{\eps m}\right)^{\max(2,\sqrt{\eps b}/4)}.
    \end{align*}
\end{lem}

\begin{proof}
    Let $H$ consist of the first $b$ half-edges in $\partial K_t$, so no half-edge in $(\partial K_t)\setminus H$ is closer to $v$ than a half-edge in $H$, and subject to this, $H$ consists of the $b$ lexicographically least half-edges. Let $\cC=\{A\in\cA_\dseq:K_t(A,v)=K_t\}$, and for $A\in\cC$, let $s(A)$ be maximal such that $e_{t+s(A)}(A,v)$ has a half-edge in $H$. Let $E(A)=\{e_{t+1}(A,v),\dots,e_{t+s(A)}(A,v)\}$ be the set of edges explored with a half-edge in $H$. Note that $b/2\leq s(A)\leq b$, since each $e\in E(A)$ has either 1 or 2 half-edges in $H$.
    
    Let $\cA$ consist of all $A\in\cC$ such that $|B_{K(A)}(K_t,3)|\leq m/2$, no $e\in E(A)$ has an endpoint outside $K_t$ of degree at least $3b$, and greater than $\lceil\eps b\rceil$ of the edges in $E(A)$ are back-edges. If at most $\lceil\eps b\rceil$ of the edges in $E(A)$ are back-edges, then $s(A)\geq b-\lceil\eps b\rceil$, so in the time interval $[t+1,t+s(A)]$, the queue size drops by 2 at most $\lceil\eps b\rceil$ times, and increases by at least 1 at least $b-2\lceil\eps b\rceil$ times, for a net gain of at least $b-4\lceil\eps b\rceil$. If some $e\in E(A)$ had an endpoint outside $K_t$ of degree at least $3b$, then even if all $b-1$ of the other edges in $E(A)$ were back-edges, the net change in queue size after exploring the edges in $E(A)$ would be at least $3b-2-2(b-1)=b\geq b-4\lceil\eps b\rceil$. Thus $|\partial K_{t+s(A)}(A,v)|\geq |\partial K_t|+b-4\lceil\eps b\rceil$ for all $A\in\cC\setminus\cA$.

    Let $s=\max(2,\lceil\sqrt{\eps b}/4\rceil)$, set $\cA_0=\cA$ and for $i\in[0,s-1]$, construct $\cA_{i+1}$ from $\cA_i$ as follows. For $A\in\cA_{i-1}$, call a valid pair $(e,f)$ in $A$ good if $e$ is a back-edge in $\vec{E}(A)$ that does not have an endpoint outside $V(K_t)$ of degree at least $3b$, and if $f^-,f^+\not\in V(K_t)\cup\{e^-,e^+\}$. If $i=0$ and there is a non-subdivided edge in $E(A)$, then for $(e,f)$ to be good, we also insist that $e$ is non-subdivided. Let $\cA_{i+1}$ consist of all $B\in\cA_\dseq$ which can be obtained from some $A\in\cA_i$ by a good switching. Note that $\cA_i\subset \cC$ for all $i\in[0,s]$ because good switchings do not involve edges in $K_t$. 

    Fix $A\in\cA_i$. Let $A_0\in\cA_0$ be such that $A$ can be obtained from $A_0$ by a sequence of $i$ good switchings. Let $P$ be the set of back-edge in $E(A_0)\cap E(A)$. Because $P$ has at least $\lceil\eps b\rceil+1$ back-edges in $E(A_0)$ and each good switching removes at most 1 such edge, $|P|\geq \lceil\eps b \rceil+1-i$. 
    
    When $i=0$, let $P^-$ consist of the non-subdivided edges in $P$ if any exist and otherwise set $P^-=P$, so $|P^-|\geq 1$. For $i\geq 1$, obtain $P^-$ from $P$ by removing the non-subdivided edges both of whose endpoints are incident to an edge in $K(A)\setminus K(A_0)$. Since $|K(A)\setminus K(A_0)|\leq 2i$, there are at most $4i$ vertices incident to an edge in $K(A)\setminus K(A_0)$, and therefore at most ${4i\choose 2}$ non-subdivided edges in $K(A)$ both of whose endpoints are incident to an edge in $K(A)\setminus K(A_0)$. Hence, $|P^-|\geq |P|-{4i\choose 2}\geq \lceil \eps b \rceil+1-i-{4i\choose 2}$. 
    
    When $i=1$, we in fact have $|P^-|\geq \lceil \eps b \rceil$. Otherwise, after switching away a non-subdivided back-edge in $K(A_0)$ to obtain $K(A)$, there is still a non-subdivided edge in $K(A)$ with the same endpoints. This implies that there were 2 non-subdivided edges in $K(A_0)$ with the same endpoints, which is impossible. 
    
    Let $Q=K(A_0)\cap (K(A)\setminus B_{K(A)}(K_t,3))$, so since $|B_{K(A)}(K_t,3)|\leq m/2$, $|Q|\geq m/2-2i$. Now obtain $Q^-$ by removing from $Q$ each non-subdivided edge both of whose endpoints are incident to a vertex in $V(K_t)\cup V(P^-)$. Any edge joining $V(K_t)\cup V(P^-)$ to $V(Q)$ must be in $K(A)\setminus K(A_0)$ since $K_t\cup P\subset B_{K(A)}(K_t,1)$ and $Q\subset K(A)\setminus B_{K(A)}(K_t,3)$. It follows that $|Q^-|\geq |Q|-{4i\choose 2}\geq m/2-2i-{4i\choose 2}$. If $e\in\vec{P}^-$ and $f\in\vec{Q}^-$, then $f$ is either subdivided or has an endpoint not adjacent to either endpoint of $e$, and $e$ is either subdivided or has an endpoint not incident to any edge in $K(A_0)\setminus K(A)$. Since $e,f\in\vec{K}(A_0)$, $e\in\vec{B}_{K(A_0)}(K_t,1)$, and $f\not\in \vec{B}_{K(A_0)}(K_t,3)$, any edge joining an endpoint of $e$ to an endpoint of $f$ would have to lie in $K(A)\setminus K(A_0)$. By Observation~\ref{obs:suff_for_two}, at least 2 of $(e,f),(\bar{e},f),(e,\bar{f}),(\bar{e},\bar{f})$ are valid in $K(A)$. By construction of $P^-$ and $Q^-$, any valid pair in $\vec{P}^-\times\vec{Q}^-$ is also good. Therefore the number of good switchings from $A$ to $\cA_{i+1}$ is at least
    \begin{align*}
        \frac{|\vec{P}^-\times\vec{Q}^-|}{2} &\geq \frac{2\left(\lceil \eps b \rceil+1-i-{4i\choose 2}\I{i\geq 2}\right)^{\I{i\geq 1}}2\left(m/2-2i-{4i\choose 2}\right)}{2}\\
        &= 2\left(\eps b-8i^2\I{i\geq 2}\right)^{\I{i\geq 1}}\left(m-16i^2\right)
    \end{align*}

    Now fix $B\in\cA_{i+1}$, and let $(e,f)$ be the reversal of a good switching $(e',f')$ from some $A\in\cA_i$ to $B$. Up to equivalence of switchings, $e$ has a half-edge in $\partial K_t$, and $f$ either has a half-edge in $H$ or $f$ is incident to an endpoint of degree less than $(3-7\eps/2)b$ of some edge in $E(B)$. The number of oriented edges in $K(B)$ with a half-edge in $\partial K_t$ is at most $2|\partial K_t|$. The number of oriented edges in $K(B)$ with a half-edge in $H$ is at most $2b$. The number of oriented edges in $K(B)$ with an endpoint of degree less than $(3-7\eps/2)b$ that is incident to an edge in $E(B)$ is at most $2(3-7\eps/2)b^2$. Therefore the number of good switchings to $B$ from $\cA$ is at most 
    \begin{equation*}
        2|\partial K_t|(2b+2(3-7\eps/2)b^2) \leq 16b^2|\partial K_t|.
    \end{equation*}
    By Lemma~\ref{lem:switch},
    \begin{align*}
        \p{\bA\in\cA_i\mid\bA\in\cC} &\leq \frac{16b^2|\partial K_t|}{2\left(\eps b-8i^2\I{i\geq 2}\right)^{\I{i\geq 1}}\left(m-16i^2\right)}\p{\bA\in\cA_{i+1}\mid\bA\in\cC}
    \end{align*}
    When $i=0$, this reads
    \begin{equation*}
        \p{\bA\in\cA_0\mid\bA\in\cC} \leq \frac{8b^2|\partial K_t|}{m}\p{\bA\in\cA_1\mid\bA\in\cC}.
    \end{equation*}
    When $i=1$, since $16\leq 3m/4$, the above yields
    \begin{equation*}
        \p{\bA\in\cA_1\mid\bA\in\cC} \leq \frac{32b|\partial K_t|}{\eps m}\p{\bA\in\cA_2\mid\bA\in\cC}.
    \end{equation*}
    If $i\geq 2$, then $s=\lceil \sqrt{\eps b}/4\rceil$, so since $i\leq s-1$, we have $8i^2\leq 8s^2\leq \eps b/2$ and $16i^2\leq 16s^2\leq \eps b\leq  |\partial K_t|\leq m/2$. The above then simplifies to
    \begin{equation*}
        \p{\bA\in\cA_i\mid\bA\in\cC} \leq \frac{32b|\partial K_t|}{\eps m}\p{\bA\in\cA_{i+1}\mid\bA\in\cC}.
    \end{equation*}
    By induction, we then have 
    \begin{align*}
        &\p{\text{$|\partial \bK_{t+s}|<|\partial K_t|+(1-7\eps/2)b$ for all $s\in[b]$},|B_\bK(K_t,3)|\leq m/2\mid \bK_t=K_t}\\
        &\leq \p{\bA\in\cA_0\mid \bA\in\cA}\\
        &\leq \frac{8b^2|\partial K_t|}{m}\left(\frac{32b|\partial K_t|}{\eps m}\right)^{s-1}\p{\bA\in\cA_s\mid\bA\in\cC}\\
        &= \frac{\eps b}{8}\left(\frac{32b|\partial K_t|}{\eps m}\right)^s \\
        &\leq \frac{\eps b}{8}\left(\frac{32b|\partial K_t|}{\eps m}\right)^{\max(2,\sqrt{\eps b}/4)}.\qedhere
    \end{align*}
\end{proof}

Many of our bounds on ``bad" queue growth event probabilities only hold when intersected with the event that the ball of some small radius around the exploration contains a minority of the kernel edges. The following observation shows that there is almost no loss of generality in intersecting with such an event.

\begin{obs}\label{obs:big_ball}
    Fix integers $t_0,r_0,q\geq 0$ such that $m>2(r_0+4)(q-1)$. If $\rad(K_{t_0})\leq r_0$ and $|B_K(K_{t_0},3)|\geq m/2$, then there exists $t\leq t_0+4q-6$ such that $\rad(K_t)\leq r_0+3$ and $|\partial K_t|\geq q$.
\end{obs}

\begin{proof}
    To save a variable name, we shall assume that $r_0=\rad(K_{t_0})$. For $r\geq 0$, let $t(r)$ be the minimal $t$ such that the half-edge $ui\in e_{t+1}$ about to be explored has $\dist_K(v,u)=r$. Note that $\rad(K_{t(r)})=r$. Also notice that if $\{xj,yk\}\in K$ has $\dist(x,v)=r$ and $\dist(y,v)\geq r$, then $xj\in\partial K_{t(r)}$. This implies that $t(r+1)\leq t(r)+|\partial K_{t(r)}|$. Similarly $t(r_0)\leq t_0+|\partial K_{t_0}|-2$ since $\rad(K_{t_0})=r_0$. This also implies that for each $\{xj,yk\}\in B_K(K_t,3)$, if $r=\dist_K(x,v)\leq \dist_K(y,v)$ then $\dist_K(x,v)=r\leq r_0+3$ and $\dist_K(y,v)\geq r$, so $xi\in\partial K_{t(r)}$. In this way each $e\in B_K(K_t,3)$ yields a different element $xi\in\bigcup_{r=0}^{r_0+3}\partial K_{t(r)}$, so $|B_K(K_t,3)|\leq \sum_{r=0}^{r_0+3}|\partial K_{t(r)}|$.

    We can assume that $|\partial K_t|<q$ for all $t\leq t_0$, so $t(r_0)\leq t_0+|\partial K_{t_0}|-2\leq t_0+q-3$. Therefore if $|\partial K_{t(r_0)}|\geq q$, then since $\rad(K_{t(r_0)})=r_0$, we are done. Otherwise, 
    \begin{equation*}
        t(r_0+1)\leq t(r_0)+|\partial K_{t(r_0)}|\leq t_0+q-3+q-1 = t_0+2q-4.
    \end{equation*}
    Hence, if $|\partial K_{t(r_0+1)}|\geq q$, then we are done. Otherwise, $t(r_0+2)\leq 3q-5$. Continuing in this way, if there exists $i\in\{0,1,2,3\}$ with $|\partial K_{t(r_0+i)}|\geq q$, then we are done. Otherwise, since $|\partial K_{t(r)}|< q$ for all $r<r_0$, we obtain
    \begin{equation*}
        \frac{m}{2}\leq |B_K(K_t,3)|\leq\sum_{r=0}^{r_0+3}|\partial K_{t(r)}|\leq (r_0+4)(q-1).
    \end{equation*}
    This contradicts our assumption that $m>2(r_0+4)(q-1)$.
\end{proof}

To streamline the proof of Lemma~\ref{lem:reach_m/log^2m}, we isolate the following deterministic fact. The statement is technical but, roughly speaking, it says that if the queue is large enough compared to the batch size, then linear queue growth from each batch (which we should expect by Lemma~\ref{lem:batch_of_b}) implies exponential queue growth as a function of radius until the process crosses a queue size threshold. Once this threshold is crossed, we increase the batch size and apply Lemma~\ref{lem:from_p_to_q} again.

\begin{lem}\label{lem:from_p_to_q}
    Fix $\eps\in(0,1)$, $\delta\in(0,1]$ and integers $t_0\geq 0$ and $b,p,q\geq1$ such that $q\geq 2p$, $b\leq \delta p$, and $2q(\min(\log(q)/\log(1+\eps/2-\eps\delta)+1,t_0+(1/\eps+1)q)+4)\leq m$. Suppose that $|\partial K_{t_0}|\geq p$ and for all $t< (1/\eps+1)q+t_0$ such that $p\leq |\partial K_t|<q$, either $|B_K(K_t,3)|\geq m/2$ or $|\partial K_{t+s}|\geq |\partial K_t|+\eps b$ for some $s\in[b]$. Then there exists $t^*\leq t_0+(1/\eps+4.5)q-6$ such that $\rad(K_{t^*})\leq \rad(K_{t_0})+\log(q/p)\log(1+\eps/2-\eps\delta)+4$ and $|\partial K_{t^*}|\geq q$.  
\end{lem}

\begin{proof}
    First we show that under these assumptions, for all $t$ such that $p\leq |\partial K_t|<q$, $t+|\partial K_t|/2\leq (1/\eps+1)q+t_0$, and $|B_K(K_t,3)|<m/2$, there exists $s\geq 1$ such that $s\leq \min((|\partial K_{t+s}|-|\partial K_t|)/\eps,|\partial K_t|/2)$, $\rad(K_{t+s})\leq \rad(K_t)+1$, and either $|\partial K_{t+s}|\geq \min((1+\eps/2-\eps\delta)|\partial K_t|,q)$, or $|B_K(K_{t+s})|\geq m/2$.

    Define $(s_i,i\in[0,k])$ recursively as follows. First set $s_0=0$. For $i\geq 0$, if $t+s_i<(1/\eps+1)q+t_0$, $|\partial K_{t+s_i}|<\min((1+\eps/2-\eps\delta)|\partial K_t|,q)$, and $|B_K(K_{t+s_i})|<m/2$, then by the assumptions of the lemma, we can choose $r_{i+1}\in[b]$ such that $|\partial K_{t+s_i+r_{i+1}}|\geq |\partial K_{t+s_i}|+\eps b$ and we take $s_{i+1}=s_i+r_{i+1}$. If any of these three conditions fail, set $k=i$ and terminate. We take $s=s_k$. By assumption, these conditions hold when $i=0$, so $k\geq 1$, and hence $s\geq 1$. Furthermore, it follows by induction that $s_i\leq ib$ and $|\partial K_{t+s_i}|\geq |\partial K_t|+\eps ib$ for all $i\in[0,k]$. We claim that $s<|\partial K_t|/2$. Indeed $|\partial K_t|+\eps(k-1)b\leq |\partial K_{t+s_{k-1}}|<(1+\eps/2-\eps\delta)|\partial K_t|$, which combined with the fact that $|\partial K_t|\geq p\geq b/\delta $ yields $kb<|\partial K_t|/2$. Thus $s=s_k\leq kb<|\partial K_t|/2$ as claimed. This implies $t+s<t+|\partial K_t|/2\leq (1/\eps+1)q+t_0$, so by definition of $k$ and since $s=s_k$, either $|\partial K_{t+s}|\geq\min((1+\eps/2-\eps\delta)|\partial K_t|,q)$ or $|B_K(K_{t+s})|\geq m/2$. That $s<|\partial K_t|/2$ also implies $\rad(K_{t+s})\leq\rad(K_t)+1$. Indeed at most two half-edges are explored in each time step, and in order to increase the radius of the exploration by two, we must explore every half-edge in the current queue. It only remains to check that $s\leq (|\partial K_{t+s}|-|\partial K_t|)/\eps$, and this holds because $s=s_k\leq kb$, so $|\partial K_{t+s}|\geq |\partial K_t|+\eps kb\geq |\partial K_t|+\eps s$. Thus, $s$ has the desired properties. 

    Starting with $t_0$, recursively define $(t_i,i\in[0,i^*])$ as follows. If $p\leq |\partial K_{t_i}|<q$, $t_i+|\partial K_{t_i}|/2\leq (1/\eps+1)q+t_0$, and $|B(K_{t_i},3)|<m/2$, then choose $t_{i+1}\geq t_i+1$ such that $t_{i+1}-t_i\leq \min((|\partial K_{t_{i+1}}|-|\partial K_{t_i}|)/\eps,|\partial K_{t_i}|/2)$, $\rad(K_{t_{i+1}})\leq\rad(K_{t_i})+1$, and either $|\partial K_{t_{i+1}}|\geq \min((1+\eps/2-\eps\delta)|\partial K_{t_i}|,q)$ or $|B_K(K_{t_{i+1}},3)|\geq m/2$. Such a $t_{i+1}$ exists by the preceding argument. Otherwise, set $i^*=i$ and terminate.
    
    One of $|\partial K_{t_{i^*}}|\geq q$ or $t_{i^*}+|\partial K_{t_{i^*}}|/2>(1/\eps+1)q+t_0$ or $|B(K_{t_{i^*}},3)|\geq m/2$ must hold. Furthermore, it follows by induction that $|\partial K_{t_i}|\geq\min((1+\eps/2-\eps\delta)^ip,q)$ and $\rad(K_{t_i})\leq\rad(K_{t_0})+i$ for all $i\in[0,i^*]$. If $i^*\geq 1$ then $|\partial K_{t_{i^*-1}}|<q$, so $(1+\eps/2-\eps\delta)^{i^*-1}p<q$, and hence $i^*\leq \log(q/p)/\log(1+\eps/2-\eps\delta)+1$. If $i^*=0$ then this still holds. Therefore 
    \begin{equation*}
        \rad(K_{t_{i^*}}) \leq \rad(K_{t_0})+i^* \leq \rad(K_{t_0})+\frac{\log(q/p)}{\log(1+\eps/2-\eps\delta)}+1
    \end{equation*}
    Now observe that $t_{i^*}\leq (1/\eps+1/2)q+t_0$. Indeed this holds if $i^*=0$, and if $i^*\geq 1$, then
    \begin{align*}
        t_{i^*}-t_0 &= (t_{i^*}-t_{i^*-1}) +\sum_{i=1}^{i^*-1}(t_i-t_{i-1})\\
        &\leq |\partial K_{t_{i^*-1}}|/2+\sum_{i=1}^{i^*-1}(|\partial K_{t_i}|-|\partial K_{t_{i-1}}|)/\eps\\
        &= |\partial K_{t_{i^*-1}}|/2+(|\partial K_{t_{i^*-1}}|-|\partial K_{t_0}|)/\eps\\
        &\leq (1/\eps+1/2)q.
    \end{align*}
    Therefore $\rad(K_{t_{i^*}})\leq t_{i^*}\leq t_0+(1/\eps+1/2)q$. Moreover if $|\partial K_{t_{i^*}}|<q$, then
    \begin{equation*}
        t_{i^*}+|\partial K_{t_{i^*}}|/2<(1/\eps+1/2)q+q/2+t_0=(1/\eps+1)q+t_0.
    \end{equation*}
    This implies $|B(K_{t_{i^*}})|\geq m/2$. Since $2q(\min(\log(q)/\log(1+\eps/2-\eps\delta)+1,t_0+(1/\eps+1/2)q)+4)\leq m$ by assumption, we have $m>2(\rad(K_{t_{i^*}})+4)(q-1)$. By Observation~\ref{obs:big_ball}, there exists $t^*\leq t_{i^*}+4q-6\leq t_0+(1/\eps+4.5)q-6$ such that 
    \begin{align*}
        \rad(K_{t^*}) &\leq \rad(K_{t_{i^*}})+3 \leq \rad(K_{t_0})+\frac{\log(q/p)}{\log(1+\eps/2-\eps\delta)}+4\text{, and}\\
        |\partial K_{t^*}| &\geq q,
    \end{align*}
    as required. Otherwise $|\partial K_{t_{i^*}}|\geq q$, and with $t^*=t_{i^*}\leq t_0+(1/\eps+1/2)q\leq t_0+(1/\eps+4.5)q-6$, we have
    \begin{align*}
        \rad(K_{t^*}) &\leq \rad(K_{t_0})+\frac{\log(q/p)}{\log(1+\eps/2-\eps\delta)}+1\text{, and}\\
        |\partial K_{t^*}| &\geq q.\qedhere
    \end{align*}
\end{proof}

\begin{proof}[Proof of Lemma~\ref{lem:reach_m/log^2m}]
    Let $c=(e\cdot 5\cdot10\cdot16\cdot160)^{-1}$, and for $i\in\{1,2,3,4\}$, let $\delta_i,b_i,p_i,q_i$ be given by the following table.
    \[
    \begin{array}{clllllll}
    \toprule
    i & {\delta_i}   &  {b_i} & {p_i} & {q_i}\\
    \midrule
    1 &  {1}      & {5}   & {5}  & {35}\\
    2 &  {1/7}     & {5}   & {35}  & {5\cdot 7\cdot 18^2}\\ 
    3 &  {1/7}     & {5\cdot 18^2}   & {5\cdot 7\cdot 18^2}  & {5\lceil m^{1/3}\rceil}\\ 
    4 &  {1/7}     & {\lceil5(12\log m)^2\rceil}   & {5\lceil m^{1/3}\rceil}  & {\lceil cm/\log^2m\rceil}\\ 
    \bottomrule
    \end{array}
    \]

    Let $\cA_0$ consist of all $A\in\cA_\dseq$ such that $|\partial K_t(A,v)|\geq 5$ for some $t\leq T_0=19$. For $i\in\{1,2,3,4\}$, set $T_i=T_{i-1}+10q_i-6$ and let $\cA_i$ consist of all $A\in\cA_\dseq$ such that for all $t<T_i$ with $p_i\leq |\partial K_t(A,v)|<q_i$, either $|B_{K(A)}(K_t(A,v),3)|\geq m/2$ or $|\partial K_{t+s}(A,v)|\geq |\partial K_t(A,v)|+b_i/5$ for some $s\in[b_i]$. It suffices to show that for all $A\in\bigcap_{i=0}^4\cA_i$, there exists $t$ such that $\rad(K_t(A,v))\leq 15\log m$ and $|\partial K_t(A,v)|\geq cm/\log^2 m$, and that $\p{\bA\not\in\cA_i}=O(1/m^2)$ for all $i\in\{0,1,2,3,4\}$.  

    Fix $A\in\cA_\dseq$. We will recursively find, for each $i\in\{1,2,3,4\}$, a time $t_i\leq T_i$ such that
    \begin{align*}
        \rad(K_{t_i}(A,v)) &\leq T_1+4(i-1)+15\log(q_i)-15\log(q_1)\text{, and}\\
        |\partial K_{t_i}(A,v)| &\geq q_i.
    \end{align*}
    Let $\eps=1/5$ and note that for all $i\in\{1,2,3,4\}$, we have $q_i\geq 2p_i$, $b_i\leq\delta_i p_i$ and $2q_i(\min(\log(q_i)/\log(1+\eps/2-\eps\delta_i)+1,t_0+(1/\eps+1/2)q_i)+4)\leq m$. Furthermore for $i\neq 1$, we have $1/\log(1+\eps/2-\eps\delta_i)=1/\log(1+1/10-1/35)\leq 15$. 
    
    Since $A\in\cA_0$, there exists $t_0\leq T_0$ such that $|\partial K_{t_0}(A,v)|\geq p_1$. Now $T_1=T_0+10q_1-6\geq t_0+(1/\eps+1)q_1$, so since $A\in\cA_1$, for all $t<t_0+(1/\eps+1)q_1$ such that $p_1\leq |\partial K_t(A,v)|<q_1$, either $|B_{K(A)}(K_t(A,v),3)|\geq m/2$ or $|\partial K_{t+s}(A,v)|\geq |\partial K_t(A,v)|+\eps b_1$. By Lemma~\ref{lem:from_p_to_q}, there exists $t_1\leq t_0+(1/\eps+4.5)q_1-6\leq T_1$ such that $|\partial K_{t_1}(A,v)|\geq q_1$. We also have $\rad(K_{t_1}(A,v))\leq t_1\leq T_1$. This establishes the base case.

    Now fix $i\in\{1,2,3\}$ and suppose that we have found the desired $t_i\leq T_i$. Then $T_{i+1}=T_i+10q_{i+1}-6\geq t_i+(1/\eps+1)q_{i+1}$, so since $A\in\cA_{i+1}$, for all $t<t_i+(1/\eps+1)q_{i+1}$ such that $p_{i+1}\leq |\partial K_t(A,v)|<q_{i+1}$, either $|B_{K(A)}(K_t(A,v),3)|\geq m/2$ or $|\partial K_{t+s}(A,v)|\geq |\partial K_t(A,v)|+\eps b_{i+1}$ for some $s\in[b_{i+1}]$. By construction, $|\partial K_{t_i}(A,v)|\geq q_i=p_{i+1}$. By Lemma~\ref{lem:from_p_to_q}, there exists $t_{i+1}\leq t_i+(1/\eps+4.5)q_{i+1}-6\leq T_{i+1}$ such that $|\partial K_{t_{i+1}}|\geq q_{i+1}$ and 
    \begin{align*}
        \rad(K_{t_{i+1}}(A,v)) &\leq \rad(K_{t_i}(A,v))+\frac{\log(q_{i+1}/p_{i+1})}{\log(1+\eps/2-\eps\delta)}+4\\
        &\leq \rad(K_{t_i}(A,v))+4+15\log(q_{i+1})-15\log(q_i)\\
        &\leq T_1+4i+15\log(q_{i+1})-15\log(q_1).
    \end{align*}
    This completes the construction of $t_i\leq T_i$ for $i\in\{1,2,3,4\}$. With $t=t_3$, we have $|\partial K_t(A,v)|\geq q_3\geq cm/\log^2m$ and 
    \begin{align*}
        \rad(K_t(A,v)) &\leq T_1+12+15\log(q_3)-15\log(q_1)\\
        &\leq 363+12-15\log(35)+15\log(m)-15\log(\log^2(m))\\
        &\leq 15\log(m),
    \end{align*}
    as required.

    To bound $\p{\bA\not\in\cA_0}$, observe that for $A\in\cA_\dseq$, if there exists $t_0^-\leq 5$ such that either $|\partial K_{t_0^-}(A)|\geq 5$ or $|B_{K(A)}(K_{t_0^-}(A,v),2)|\geq m/2$, then $A\in\cA_0$. If $|\partial K_{t_0^-}(A)|\geq 5$ then taking $t_0=t_0^-\leq 19$ shows that $A\in\cA_0$. Otherwise $|B_{K(A)}(K_{t_0^-}(A,v),2)|\geq m/2$, so since $\rad(K_{t_0^-})\leq t_0^-\leq 5$ and $m>(5+3)(5-1)$, by Observation~\ref{obs:big_ball} there exists $t_0\leq 5+4\cdot 5-6=19$ such that $|\partial K_{t_0}(A,v)|\geq 5$. Hence, by Lemma~\ref{lem:reach_5},
    \begin{align*}
        \p{\bA\not\in\cA_0}\leq \p{|B_\bK(\bK_t,2)|\leq m/2,\text{$|\partial\bK_t|<5$ for all $t\in[5]$}}\leq \frac{416}{m^2}.
    \end{align*}

    For $i\in\{1,2,3,4\}$, we have $b_i-4\lceil\eps b_i\rceil=b_i-4b_i/5=b_i/5$, so
    \begin{align*}
        &\p{\bA\not\in\cA_i}\\
        &\leq \sum_{i=0}^{T_i-1}\p{\text{$|B_\bK(\bK_t,3)|\leq m/2$, $|\partial\bK_{t+s}|<|\partial\bK_t|+b_i/5$ for all $s\in[b_i]$}\mid p_i\leq |\partial\bK_t|<q_i}\\
        &\leq T_i\frac{b_i}{40}\left(\frac{160b_iq_i}{m}\right)^{\max\left(2,\frac{1}{4}\sqrt{\frac{b_i}{5}}\right)}
    \end{align*}
    For $i\in\{1,2\}$, we have $T_i,b_i,q_i=O(1)$ and so $\p{\bA\not\in\cA_i}=O(1/m^2)$. For $i=3$, note that $(\sqrt{b_3/5})/4=9/2$, $q_3=O(m^{1/3})$, and $T_3=T_2+10q_3-6=O(m^{1/3})\leq m$, so
    \begin{align*}
        \p{\bA\not\in\cA_3} \leq m\frac{5\cdot 18^2}{40}O(1/m^{2/3})^{9/2}=O(1/m^2).
    \end{align*}
    Finally, for $i=4$, we have $160b_4q_4\leq 1/e$ and $(\sqrt{b_4/5})/4\geq 3\log m$, and $T_4b_4= O(m)$, so
    \begin{equation*}
        \p{\bA\not\in\cA_4} =O(m(1/e)^{3\log m})= O(1/m^2).
    \end{equation*}
    We conclude that
    \begin{align*}
        &\p{\text{$\rad(\bK_t)\leq 15\log m$, $|\partial\bK_t|\geq cm/\log^2m$ for some $t$}}\\
        &\geq 1-\sum_{i=0}^{4}\p{\bA\not\in\cA_i}\\
        &= 1-O(1/m^2),
    \end{align*}
    as required.
\end{proof}

\subsection{Connecting the explorations}

In this subsection we prove Theorem~\ref{thm:ker_diam}. We have already shown that the breadth-first kernel exploration started from a given vertex reaches queue size of order $m/\log^2m$ within order $\log m$ radius with high probability. It remains to show that these explorations are well connected to each other. This is where we use the flexibility in Lemma~\ref{lem:switch} that allows us to only count ``good" switchings, and we start by providing the definition of good switching that we will use in most of the proofs.

\begin{dfn}
    Fix $A\in\cA_\dseq$, an integer $s\geq0$, and subgraphs $L,M\subset K(A)$. Say that a valid pair $(e,f)\in\vec{K}(A)^2$ is $(L,M,s)$-good in $A$ if $e\not\in L$ but $e$ has an endpoint in $V(L)$, $f\not\in M$ but $f$ has an endpoint in $V(M)$, and at least one of the following holds:
    \begin{enumerate}
        \item Either $e^-\in V(L)$ and $f^-\in V(M)$ or $e^+\in V(L)$ and $f^+\in V(M)$.
        \item There exists $u\in\{e^-,e^+\}\setminus V(L)$ joined to $V(L)$ by at least $2s+2$ edges in $K(B)$, and there exists $v\in\{f^-,f^+\}\setminus V(M)$ joined to $V(M)$ by at least $2s+2$ edges in $K(B)$.
    \end{enumerate}
\end{dfn}

Given $A\in\cA_\dseq$ and a half-edge $ui$, write $\len_A(ui)$ as shorthand for $\len_A(\{ui,vj\})$, where $vj$ is the unique half-edge such that $\{ui,vj\}\in K(A)$.

\begin{lem}\label{lem:num_good}
    Fix $A,B\in\cA_\dseq$ such that $B$ can be obtained from $A$ by a sequence of $s-1$ switchings. Let $L,M\subset K(A)$ and suppose that $\dist_{K(A)}(L,M)\geq 4$, $|\partial L|\geq p$, and for all $S\subset[n]$ with $|S|\leq 4s$, $|(\partial M)\setminus \{vi:v\in S, i\in[d_v],\len_A(vi)=1\}|\geq q$. Then the number of pairs $(e,f)\in\vec{K}(B)^2$ that are $(L,M,s)$-good in $B$ is at least $(p/2-8(2s^2+1))(q/2-8(2s^2+1))$.
\end{lem}

\begin{proof}
    Let $P=\{\{ui,vj\}\in K(A):ui\in\partial L\}$ and let $Q=\{\{ui,vj\}\in K(A):ui\in\partial M\}$. Note that $|K(A)\setminus K(B)|,|K(B)\setminus K(A)|\leq 2(s-1)$. Letting $P'=P\cap K(B)$, it follows that $|P'|\geq |P|-2(s-1)$. Now obtain $P''$ from $P'$ by removing all edges that are non-subdivided and have both endpoints incident to an edge in $K(B)\setminus K(A)$. The non-subdivided edges span a simple subgraph of $K(B)$, which has at most $4(s-1)$ vertices because $|K(B)\setminus K(A)|\leq 2(s-1)$, so $|P''|\geq|P'|-{4(s-1)\choose 2}$. Obtain $P^-$ from $P''$ by removing all edges $e$ with an endpoint $u\not\in V(L)$ that is incident to at most $2s+1$ edges joining $u$ to $V(L)$ in $K(B)$. Then $|P^-|\geq |P''|-4s(2s+1)$. Now $2(s-1)+{4(s-1)\choose 2}+4s(2s+1)=16s^2-12s+8\leq 8(2s^2+1)$, so combining the inequalities, $|P^-|\geq |P|-8(2s^2+1)\geq p/2-8(2s^2+1)$. Let $Q'=Q\cap K(B)$, so $|Q'|\geq |Q|-2(s-1)$, and obtain $Q''$ from $Q'$ by removing all non-subdivided edges with at least one endpoint incident to an edge in $K(B)\setminus K(A)$. Letting $S$ consist of all $v\in V(M)$ incident to a non-subdivided edge in $K(B)\setminus K(A)$, we have $|S|\leq 4s$, so $|(\partial M)\setminus \{vi:v\in S,i\in[d_v],\len_A(vi)=1\}|\geq q$. It follows that $|Q''|\geq q/2-2(s-1)$. Finally, obtain $Q^-$ by removing from $Q''$ all edges $e$ with an endpoint $u\not\in V(M)$ incident to at most $2s+1$ edges joining $u$ to $V(L)$. Then $|Q^-|\geq |Q''|-4s(2s+1)$. Since $2(s-1)+4s(2s+1)=8s^2+6s-2\leq 8(2s^2+1)$, we have $|Q^-|\geq q/2-8(2s^2+1)$.

    We claim that for each pair $(e,f)\in\vec{P}^-\times\vec{Q}^-$, at least one of $(e,f),(e,\overline{f}),(\overline{e},f),(\overline{e},\overline{f})$ are good. Since $P^-\subset P'$ and $Q^-\subset Q'$, we have $e,f\in \vec{K}(B)$. Since $P^-,Q^-\subset K(A)$ and $\dist_{K(A)}(L,M)\geq 4$, $e$ and $f$ are vertex-disjoint. Since $e\in P''$ and $f\in Q''$, both $e$ and $f$ are either subdivided or have an endpoint not incident to any edge in $K(B)\setminus K(A)$. Any edge joining an endpoint of $e$ to an endpoint of $f$ would have to lie in $K(B)\setminus K(A)$ since $\dist_{K(A)}(L,M)\geq 4$. Therefore both $e$ and $f$ are either subdivided or have an endpoint not adjacent to either endpoint of the other. By Observation~\ref{obs:suff_for_two}, at least two of $(e,f),(e,\overline{f}),(\overline{e},f),(\overline{e},\overline{f})$ are valid in $B$. Without loss of generality $e^-\in V(L)$ and $f^-\in V(M)$, so if $(e,f)$ or $(\overline{e},\overline{f})$ is valid, then it is good. Because $f\in Q^-$, it is either subdivided or $f^-$ is not adjacent to $e^-$ in $K(B)$. Hence if there is no edge between $e^+$ and $f^+$ in $K(B)$, then $(\overline{e},\overline{f})$ is valid and hence good. So suppose that there is an edge between $e^+$ and $f^+$ in $K(B)$. This edge must lie in $K(B)\setminus K(A)$ since $\dist_{K(A)}(L,M)\geq 4$. We claim that any valid $(g,h)\in\{(e,f),(e,\overline{f}),(\overline{e},f),(\overline{e},\overline{f})\}$ is good. This holds if $(g,h)\in\{(e,f),(\overline{e},\overline{f})\}$ as mentioned above. This also holds if $e^+\in V(L)$ or if $f^+\in V(M)$ for the same reason. So suppose that $e^+\not\in V(L)$ and $f^+\not\in V(M)$ and that $(g,h)\in\{(e,\overline{f}),(\overline{e},f)\}$ is valid. Since $e^+\not\in V(L)$ but $e\in\vec{P}^-$ and $e^+$ is incident to an edge in $K(B)\setminus K(A)$ (namely the edge joining $e^+$ to $f^+$), $e^+$ must be joined to $V(L)$ by at least $2s+2$ edges in $K(B)$. Similarly, $f^+$ must be joined to $V(M)$ by at least $2s+2$ edges in $K(B)$. Because $e^-\in V(L)$ and $f^-\in V(M)$, we conclude that if $(g,h)\in\{(e,\overline{f}),(\overline{e},f)\}$ is valid, then it is good. We have shown that either $(e,f)$ and $(\overline{e},\overline{f})$ are both good, or, any valid $(g,h)\in\{(e,f),(e,\overline{f}),(\overline{e},f),(\overline{e},\overline{f})\}$ is good. Since at least two of $(e,f),(e,\overline{f}),(\overline{e},f),(\overline{e},\overline{f})$ are valid, this implies that at least two must be good as claimed.

    The preceding argument shows that at least one quarter of the pairs $(e,f)\in\vec{P}^-\times\vec{Q}^-$ are good. Therefore 
    \begin{align*}
        &|\{(e,f)\in \vec{P}\times\vec{Q}:\text{$(e,f)$ is $(L,M,s)$-good in $B$}\}|\\
        &\geq |\{(e,f)\in \vec{P}^-\times\vec{Q}^-:\text{$(e,f)$ is $(L,M,s)$-good in $B$}\}|\\
        &\geq |\vec{P}^-\times\vec{Q}^-|/4\\
        &= |P^-||Q^-|\\
        &\geq (p/2-8(2s^2+1))(q/2-8(2s^2+1)).\qedhere
    \end{align*}
\end{proof}

\begin{lem}\label{lem:few_reverse}
    Let $A,B\in\cA_\dseq$ be such that $B$ can be obtained from $A$ by a sequence of $s$ switchings. If $L,M\subset K(A)$ and $\dist_{K(A)}(L,M)\geq 4$, then the number of $(L,M,s)$-good switchings from $\cA_\dseq$ to $B$ is at most $8s(s+m)$.
\end{lem}

\begin{proof}
    Since $B$ can be obtained from $A$ by a sequence of $s$ switchings, $|K(B)\setminus K(A)|\leq 2s$. We proceed by characterizing the pairs $(e,f)\in\vec{K}(B)^2$ which can be the reversal of some $(L,M,s)$-good switching $(e',f')$ to $B$. Fix such a switching $(e',f')$.

    First suppose that $(e',f')$ is $(L,M,s)$-good of type (1), so either $e^-=e'^-\in V(L)$ and $e^+=f'^-\in V(M)$ or $f^-=e'^+\in V(L)$ and $f^+=f'^+\in V(M)$. Because $\dist_{K(A)}(L,M)\geq 4$ and $|K(B)\setminus K(A)|\leq 2s$, there are at most $2s$ edges joining $V(L)$ and $V(M)$ in $K(B)$. Hence there are at most $2s$ oriented edges in $\vec{K}(B)$ from $V(L)$ to $V(M)$. Either $e$ or $f$ is one of these at most $2s$ edges. It follows that the number of pairs $(e,f)\in K(B)^2$ which are the reversal of some $(L,M,s)$-good switching is at most $2(2s)(2m)=8sm$ (first choose which of $e$ or $f$ goes from $V(L)$ to $V(M)$, then choose a specific edge from $V(L)$ to $V(M)$, and then choose any other oriented edge).

    Now suppose that $(e',f')$ is $(L,M,s)$-good of type (2) but not of type (1). Then either $e^-=e'^-\in V(L)$, $f^-=e'^+\not\in V(L)$, $e^+=f'^-\not\in V(M)$, $f^+=f'^+\in V(M)$ or $e^-=e'^-\not\in V(L)$, $f^-=e'^+\in V(L)$, $e^+=f'^-\in V(M)$, $f^+=f'^+\not\in V(M)$. Suppose at first that the former is true. Then in $K(B)$, there are at least $2s+1$ edges joining $f^-$ to $V(L)$ and at least $2s+1$ edges joining $e^+$ to $V(M)$. We claim that there are at most $2s$ such oriented edges $e\in\vec{K}(B)$ with $e^-\in V(L)$ such that $e^+$ is joined to $V(M)$ by at least $2s+1$ edges in $K(B)$. Otherwise, since $|K(B)\setminus K(A)|\leq 2s$, some such $e$ would be present in $\vec{K}(A)$. If $e\in\vec{K}(B)\cap\vec{K}(A)$ was such that $e^-\in V(L)$ and $e^+$ was joined to $V(M)$ by at least $2s+1$ edges in $K(B)$, then at least one of these edges would also be present in $K(A)$. This edge, together with $e$, would form a path of length 2 between $L$ and $M$ in $K(A)$, contradicting that $\dist_{K(A)}(L,M)\geq 4$. Thus $e$ belongs to the set of at most $2s$ oriented edges in $K(B)$ with $e^-\in V(L)$ and $e^+$ joined to $V(M)$ by at least $2s+1$ edges. Similarly $f$ belongs to the set of at most $2s$ oriented edges in $K(B)$ with $f^+\in V(M)$ and $f^-$ joined to $V(L)$ by at least $2s+1$ edges. A similar statement holds in the other case, so there are at most $2(2s)^2=8s^2$ pairs $(e,f)\in\vec{K}(B)$ which can be the reversal of some $(L,M,s)$-good switching of type (2) but not of type (1).

    In total, there are at most $8sm+8s^2=8s(s+m)$ pairs $(e,f)\in\vec{K}(B)^2$ which are the reversal of some $(L,M,s)$-good switching. It follows that there are at most $8s(s+m)$ $(L,M,s)$-good switchings to $B$.
\end{proof}

\begin{lem}\label{lem:connection}
    Fix partial matchings $L$ and $M$ and a positive integer $s$. Suppose $p,q\geq 96s^2$, $|\partial L|\geq p$, and for all $S\subset [n]$ with $|S|\leq 4s$, $|(\partial M)\setminus \{vi:v\in S,i\in[d_v]\}|\geq q$. Then 
    \begin{equation*}
        \p{\dist_\bK(L,M)\geq 4\mid L,M\subset\bK} \leq \left(\frac{131sm}{pq}\right)^s.
    \end{equation*}
\end{lem}

\begin{proof}
    Let $\cA_0=\{A\in\cA_\dseq:L,M\subset K(A),\dist_{K(A)}(L,M)\geq 4\}$, and for $i\in[s]$, let $\cA_i$ consist of all $B\in\cA_\dseq$ which can be obtained from some $A\in\cA_{i-1}$ by an $(L,M,i)$-good switching. Because $(L,M,i)$-good switchings do not involve edges of $L$ or $M$, each $B\in\cA_i$ has $L,M\subset K(B)$. By construction, for each $A\in\cA_{i-1}$, there is some $A_0\in\cA_0$ such that $A$ can be obtained from $A_0$ by a sequence of $i-1$ switchings. By assumption, for all $S\subset [n]$ with $|S|\leq 4i$, $|(\partial M)\setminus \{vi:v\in S,i\in[d_v]\}|\geq q$. Since $L,M\subset K(A_0)$ and $\dist_{K(A_0)}(L,M)\geq 4$, by Lemma~\ref{lem:num_good}, for all $A\in\cA_{i-1}$, there are at least $(p/2-8(2i^2+1))(q/2-8(2i^2+1))$ $(L,M,i)$-good switchings in $\vec{K}(A)^2$, and each of these switchings results in some $B\in\cA_i$. By construction, for each $B\in\cA_i$, there is some $A_0\in\cA_0$ such that $B$ can be obtained from $A_0$ by a sequence of $i$ switchings. Since $L,M\subset K(A_0)$ and $\dist_{K(A_0)}(L,M)\geq 4$, the number of $(L,M,i)$-good switchings from some $A\in\cA_{i-1}$ to $B$ is at most $8i(i+m)$ by Lemma~\ref{lem:few_reverse}. Hence by Lemma~\ref{lem:switch},
    \begin{equation*}
        \p{\bA\in\cA_{i-1}\mid L,M\subset\bK} \leq \frac{8i(i+m)}{(p/2-8(2i^2+1))(q/2-8(2i^2+1))}\p{\bA\in\cA_i\mid L,M\subset\bK}.
    \end{equation*}
    Increasing $i$ to $s$ only weakens this bound, so by induction,
    \begin{align*}
        &\p{\dist_\bK(L,M)\geq 4\mid L,M\subset\bK}\\
        &= \p{\bA\in\cA_0\mid L,M\subset\bK}\\
        &\leq \left(\frac{8s(s+m)}{(p/2-8(2s^2+1))(q/2-8(2s^2+1))}\right)^s\p{\bA\in\cA_s\mid L,M\subset\bK}\\
        &\leq \left(\frac{131sm}{pq}\right)^s,
    \end{align*}
    where the last inequality holds because $p,q\geq 96s^2$ and $\p{\bA\in\cA_s\mid L,M\subset\bK}\leq 1$.
\end{proof}

\begin{lem}\label{lem:two_vtcs}
    Fix vertices $u,v\in[n]$ and a positive integer $s$ such that $d_u,d_v>12s$. Let $\cA$ consist of all $A\in\cA_\dseq$ such that $|\{ui:i\in[d_u],\len_{A}(ui)\geq 2\}|\geq d_u/3$ and $|\{vi:i\in[d_v],\len_{A}(vi)\geq 2\}|\geq d_v/3$. Then 
    \begin{equation*}
        \p{\dist_\bK(u,v)\geq 3,\bA\in\cA} \leq \left(\frac{48sm}{d_ud_v}\right)^s.
    \end{equation*}
\end{lem}

\begin{proof}
    For $A\in\cA_\dseq$, call a switching $(e,f)\in\vec{K}(A)^2$ good in $A$ if $e^-=u$, $f^-=v$, $e^+\neq f^+$, and both $e$ and $f$ are subdivided in $A$. Note that any good switching in $A$ is valid. Set $\cA_0=\{A\in\cA_\dseq:\dist_{K(A)}(u,v)\geq 3\}\cap\cA$, and for $i\in[s]$, let $\cA_i$ consist of all $B\in\cA_\dseq$ which can be obtained from some $A\in\cA_{i-1}$ by a good switching. 
    
    Fix $A\in\cA_{i-1}$ and let $A_0\in\cA_0$ be such that $A$ can be obtained from $A_0$ by a sequence of $i-1$ good switchings. Then $|K(A)\setminus K(A_0)|\leq 2(i-1)$, and each edge not switched on in obtaining $A$ from $A_0$ is still present in $K(A)$ and moreover has the same length in $A$ as it does in $A_0$. Since $A_0\in\cA$, it follows that there are at least $d_u/3-2(i-1)$ oriented edges $e\in\vec{K}(A)\cap \vec{K}(A_0)$ that are subdivided and have $e^-=u$. Similarly there are at least $d_v/3-2(i-1)$ oriented edges $f\in\vec{K}(A)\cap \vec{K}(A_0)$ that are subdivided and have $f^-=v$. Each of the at least $(d_u/3-2(i-1))(d_v/3-2(i-1))$ such pairs $(e,f)$ are good because $e,f\in\vec{K}(A_0)$ and $\dist_{K(A_0)}(u,v)\geq 3$ implies $e^+\neq f^+$. This shows that for each $A\in\cA_{i-1}$ there are at least $(d_u/3-2(i-1))(d_v/3-2(i-1))$ good switchings from $A$ to $\cA_i$.

    Now fix $B\in\cA_i$ and let $A_0\in\cA_0$ be such that $B$ can be obtained from $A_0$ by a sequence of $i$ good switchings. Then $|K(B)\setminus K(A_0)|\leq 2i$, so there are most $2i$ edges joining $u$ and $v$ in $K(B)$. If $(e,f)$ is the reversal of a good switching $(e',f')$ to $B$, then $e^-=e'^-=u$ and $e^+=f'^-=v$. There are at most $2i$ oriented edges in $\vec{K}(B)$ from $u$ to $v$, so there are at most $(2i)(2m)=4im$ pairs $(e,f)\in\vec{K}(B)^2$ which can be the reversal of some good switching to $B$. This shows that for each $B\in\cA_i$, there are at most $4im$ good switchings to $B$ from $\cA_{i-1}$.

    Applying Lemma~\ref{lem:switch}, we obtain
    \begin{equation*}
        \p{\bA\in\cA_{i-1}}\leq \frac{4im}{(d_u/3-2(i-1))(d_v/3-2(i-1))}\p{\bA\in\cA_i}.
    \end{equation*}
    By induction then, and since increasing $i$ to $s$ only weakens this bound,
    \begin{align*}
        \p{\dist_\bK(u,v)\geq 3,\bA\in\cA}
        &= \p{\bA\in\cA_0}\\
        &\leq \left(\frac{4sm}{(d_u/3-2(s-1))(d_v/3-2(s-1))}\right)^s\p{\bA\in\cA_s}\\
        &\leq \left(\frac{48sm}{d_ud_v}\right)^s,
    \end{align*}
    where the last inequality holds since $d_u,d_v\geq 12s$ and $\p{\bA\in\cA_s}\leq 1$.
\end{proof}

\begin{cor}\label{cor:two_vtcs}
    For vertices $u,v\in[n]$ and a positive integer $s$ such that $d_u,d_v\geq 96s^2$,
    \begin{equation*}
        \p{\dist_\bK(u,v)\geq 4} \leq 3\left(\frac{393sm}{d_ud_v}\right)^s.
    \end{equation*}
\end{cor}

\begin{proof}
    For $A\in\cA_\dseq$, let $K=K(A)$, and let $N_K(u)$ denote the set of all edges in $K$ incident to $u$. We claim that either $|\{ui:i\in[d_u],\len_K(ui)\geq 2\}|\geq d_u/3$ or, for any set $S\subset[n]$ with $|S|\leq 4s$, $|(\partial N_K(u))\setminus \{xi:x\in S,i\in[d_x]\}|\geq d_u/3$. If $|V(N_K(u))|<2d_u/3$, then since all but at most 1 edge in $K$ from $u$ to a given $x\in V(N_K(u))$ can have length 1, $|\{ui:i\in[d_u],\len_K(ui)\geq 2\}|\geq d_u/3$. Suppose then that $|V(N_K(u))|\geq 2d_u/3$. Let $a$ be the number of vertices $x\in V(N_K(u))$ with at least 2 edges to $u$ in $K$, and let $b$ be the number of vertices $x\in V(N_K(u))$ with exactly one edge to $u$ in $K$. Then
    \begin{equation*}
        d_u\geq 2a+b =2(a+b) - b \geq 2(2d_u/3) -b,
    \end{equation*}
    so $b\geq d_u/3$. Each of the at least $b-4s$ vertices $x\not\in S\cup\{u\}$ with exactly one edge to $u$ in $K$ contributes at least $d_x-1\geq 2$ half-edges to $\partial N_K(u)$, and none of these are removed when we delete the half-edges incident to vertices in $S$. Hence $|(\partial N_K(u))\setminus \{xi:x\in S,i\in[d_x]\}|\geq 2(b-4s)\geq d_u/3$, where the second inequality holds since $b\geq d_u/3\geq 24s/3$. This proves the claim, and the analogous claim holds of $v$.

    Let $\cA$ be the set of all $A\in\cA_\dseq$ such that $|\{ui:i\in[d_u],\len_{A}(ui)\geq 2\}|\geq d_u/3$ and $|\{vi:i\in[d_v],\len_{A}(vi)\geq 2\}|\geq d_v/3$. We have shown that for $A\in\cA_\dseq$, either $A\in\cA$ or, for any $S\subset[n]$ with $|S|\leq 4s$ we have $|(\partial N_{K(A)}(u))\setminus\{xi:x\in S,i\in[d_x]\}|\geq d_u/3$ or, for any $S\subset[n]$ with $|S|\leq 4s$ we have $|(\partial N_{K(A)}(v))\setminus\{yi:y\in S,i\in[d_y]\}|\geq d_v/3$. Hence
    \begin{align*}
        \p{\dist_\bK(u,v)\geq 4}
        &\leq \p{\dist_\bK(u,v)\geq 4,\bA\in\cA}+\sum_{L}\p{\dist_\bK(u,v)\geq 4,N_\bK(u)=L}\\
        &+\sum_{M}\p{\dist_\bK(u,v)\geq 4,N_\bK(v)=M},
    \end{align*}
    where the sums are over all $L$ such that $\p{N_\bK(u)=L}>0$ and $|(\partial L)\setminus \{xi:x\in S,i\in[d_x]\}|\geq d_u/3$ for all $S\subset[n]$ with $|S|\leq 4s$ and over all $M$ such that $\p{N_\bK(v)=M}>0$ and $|(\partial M)\setminus \{yi:x\in S,i\in[d_y]\}|\geq d_v/3$ for all $S\subset[n]$ with $|S|\leq 4s$. For any such $M$, $N_\bK(v)=M$ if and only if $\{u\},M\subset \bK$. Letting $L=\{u\}$ so $|\partial L|=d_u$, we get
    \begin{align*}
        &\sum_{M}\p{\dist_\bK(u,v)\geq 4,N_\bK(v)=M}\\
        &\leq \sum_{M}\p{\dist_\bK(L,M)\geq 3\mid L,M\subset\bK}\p{N_\bK(v)=M}\\
        &\leq \sum_{M}\left(\frac{131sm}{d_ud_v/3}\right)^s\p{N_\bK(v)=M}\\
        &\leq \left(\frac{393sm}{d_ud_v}\right)^s\\
    \end{align*}
    where the second inequality follows from Lemma~\ref{lem:connection} and the final inequality holds because $\sum_M\p{N_\bK(v)=M}\leq 1$. A similar calculation shows that the same bound holds of the sum over $L$, and by Lemma~\ref{lem:two_vtcs},
    \begin{equation*}
        \p{\dist_\bK(u,v)\geq 4,\bA\in\cA}\leq \left(\frac{48sm}{d_ud_v}\right)^s\leq \left(\frac{393sm}{d_ud_v}\right)^s.
    \end{equation*}
    Combining the three bounds yields the desired inequality.
\end{proof}

\begin{proof}[Proof of Theorem~\ref{thm:ker_diam}]
    By Observation~\ref{obs:A=C}, we can take $\bA\in_u\cA_\dseq$, let $\bK=K(\bA)$, and bound the tail of $\diam^+(\bK)$ rather than that of $\diam^+(K(\bC))$ where $\bC\in_u\cG_\dseq^-$. For $u\in[n]$, let $(\bK_t^u,t\geq 0)$ be the breadth-first kernel exploration of $\bA$ started from $u$, and let
    \begin{equation*}
        \bt_u=\inf\{t:\rad(\bK_t^u)\leq 15\log m,|\partial\bK_t^u|\geq cm/\log^2m\}.
    \end{equation*}
    By Lemma~\ref{lem:reach_m/log^2m}, $\p{\bt_u=\infty}=O(1/m^2)$ for all $u\in[n]$.
    
    Consider a pair $u,v\in[n]$ and partial matchings $L,M$ with $\p{\bK_{\bt_u}^u=L,\bK_{\bt_v}^v=M}>0$, so $|\partial L|,|\partial M|\geq cm/\log^2m$. The absolute constant $s\geq 1$ will be chosen later. First suppose that $V(M)$ contains no vertex of degree at least $cm/(8s\log^2m)$. Then for any $S\subset[n]$ with $|S|\leq 4s$, $|(\partial M)\setminus\{xi:x\in S,i\in[d_x]\}|\geq |\partial M|-4scm/(8s\log^2m)\geq cm/(2\log^2m)$. Since $\bK_{\bt_u}^u=L$ and $\bK_{\bt_v}^v=M$ if and only if $L,M\subset\bK$, by Lemma~\ref{lem:connection},
    \begin{align*}
        \p{\dist_\bK(L,M)\geq 4\mid \bK_{\bt_u}^u=L,\bK_{\bt_v}^v=M}
        &= \p{\dist_\bK(L,M)\geq 4\mid L,M\subset\bK}\\
        &\leq \left(\frac{131sm}{(cm/\log^2m)(cm/(2\log^2m))}\right)^s\\
        &= O\left(\frac{\log^{4s}m}{m^s}\right).
    \end{align*}
    The same bound holds by similar reasoning if $V(L)$ contains no vertex of degree at least $cm/(8s\log^2m)$. Writing $E_{uv}$ for the event that $\bt_u,\bt_v<\infty$ and either $V(\bK_{\bt_u}^u)$ or $V(\bK_{\bt_v}^v)$ contains no vertex of degree at least $cm/(8s\log^2m)$, it follows that 
    \begin{equation*}
        \p{\dist_\bK(\bK_{\bt_u}^u,\bK_{\bt_v}^v)\geq 4, E_{uv}}\leq \p{\dist_\bK(\bK_{\bt_u}^u,\bK_{\bt_v}^v)\geq 4\mid E_{uv}} = O\left(\frac{\log^{4s}m}{m^s}\right).
    \end{equation*}
    
    Now for any two vertices $x,y\in[n]$ with $d_x,d_y\geq cm/(8s\log^2m)$, by Corollary~\ref{cor:two_vtcs},
    \begin{equation*}
        \p{\dist_\bK(x,y)\geq 4} \leq 3\left(\frac{393sm}{(cm/(8s\log^2m))^2}\right)^s = O\left(\frac{\log^{4s}m}{m^s}\right).
    \end{equation*}

    Now since $\log m\geq 4$, if $\bt_u,\bt_v<\infty$ but $\dist_\bK(u,v)\geq 31\log m$ then $\dist_\bK(\bK_{\bt_u}^u,\bK_{\bt_v}^v)\geq 4$. If $\bt_u,\bt_v<\infty$ and $\dist_\bK(\bK_{\bt_u}^u,\bK_{\bt_v}^v)\geq 4$ then either $E_{uv}$ occurs and $\dist_\bK(\bK_{\bt_u}^u,\bK_{\bt_v}^v)\geq 4$ or there exist $x\in V(\bK_{\bt_u}^u)$ and $y\in V(\bK_{\bt_v}^v)$ with $d_x,d_y\geq cm/(8s\log^2m)$ such that $\dist_\bK(x,y)\geq 4$. Hence,
    \begin{align*}
        \p{\diam^+(\bK)\geq 31\log m} &\leq \sum_{u\in[n]}\p{\bt_u=\infty}
        +\sum_{u,v\in[n]}\p{\dist_\bK(\bK_{\bt_u}^u,\bK_{\bt_v}^v)\geq 4, E_{uv}}\\
        &+\sum_{\substack{x,y\in[n]\\d_x,d_y\geq cm/(8s\log^2m)}}\p{\dist_\bK(x,y)\geq 4}\\
        &\leq O\left(\frac{n}{m^2}\right)+O\left(\frac{n^2\log^{4s}m}{m^s}\right)+O\left(\frac{n^2\log^{4s}m}{m^s}\right).
    \end{align*}
    Since $2m=\sum_{v\in[n]}d_v\geq3n$, the above is $O(1/m)$ if we take $s=4$. 
\end{proof}

\section{Combining the bounds, and dealing with degree two vertices}

Before reading this section, the reader may wish to reacquaint themselves with the definition of the simple kernel from Section~\ref{sec:simple_kernel}. Recall that $\grd^-$ is the set of graphs in $\grd$ with no cycle components, and that $\grd^=$ is the set of graphs in $\grd$ with no cycle components and no tree components.

The {\em simple homeomorphic reduction} of a graph $G$ is the graph $H=H(G)$ obtained from $G$ by deleting all cycle components and replacing each maximal path all of whose internal vertices lie outside $S=S(G)$ and have degree 2 by a single edge. Thus, the degree sequence of $H$ restricted to $V(H)\setminus V(S)$ is 2-free. Every immutable edge $e\in E(S)\setminus M(S)$ in $S$ is present in $H$, and we call $e$ {\em immutable in} $H$ as well. 
All other edges in $H$ are {\em mutable}. Write $M(H)=E(H)\setminus (E(S)\setminus M(S))$ and $m(H)=|M(H)|$ for the set and number of mutable edges in $H$. Then $e(H)/3\leq m(H)\leq e(H)$ because $m(S)\geq e(S)/3$. 

For each mutable edge $e\in M(H)$, let $G(e)$ denote the path in $G$ that is replaced by $e$ in constructing $H$, and let $|G(e)|$ be the number of internal vertices on $G(e)$. Because all leaves in $G$ are present in $H$, we have $S(H)=S$. In particular, we can recover the set of mutable edges $M(H)$ without making reference to $G$. For a degree sequence $\dseq=(d_1,\dots,d_n)$, we write $\grd(H)=\{G\in\grd:H(G)=H\}$, $\gmd(H)=\{G\in\gmd:H(G)=H\}$, and $\connd(H)=\{G\in\connd:H(G)=H\}$. If $H$ is connected then then $\gmd(H)=\connd(H)$.

Fix a degree sequence $\dseq=(d_1,\ldots,d_n)$ and let $H=H(G)$ for some $G\in\grd$. The data present in $G$ but not $H$ can be encoded as follows. Write $h=n-v(H)$ and $m=m(H)$ and, for convenience, suppose that $V(H)=[n]\setminus[h]$. 
Let $C$ be the the union of all cycle components of $G$. Let $P=(P_1,\dots,P_m)\in\cP_{m,h-v(C)}$ be given by $P_i=|G(e_i)|$ for $i\in[m]$, where $e_1,\dots,e_m$ are the mutable edges of $H$ listed in lexicographic order. 
For $i\in[m]$, let $\sigma^i=(\sigma_1^i,\dots,\sigma_{P_i}^i)$ be the sequence of internal vertices on the path $G(e_i)$ from $e_i^-$ to $e_i^+$. Finally, let $\sigma:[h-v(C)]\to[h]\setminus V(C)$ be the bijection where, for all $i\in[m]$, $\sigma(i)$ is equal to the $i$th entry in the concatenation of $\sigma^1,\dots,\sigma^m$. 

Let $\cS_{[h]\setminus V}$ denote the set of all bijections $[h-|V|]\to [h]\setminus V$ and let $\cR_V$ denote the set of all 2-regular graphs on $V$. Then setting
\begin{equation*}
    \cY_\dseq(H)=\bigcup_{V\subset[h]}\cR_V\times \cS_{[h]\setminus V} \times \cP_{m,h-|V|},
\end{equation*}
we have $(C,\sigma,P)\in\cY_\dseq(H)$. This encoding therefore defines a map $\phi:\grd(H)\to\cY_\dseq(H)$ which we claim is a bijection. To justify this, we describe its inverse. Given $(C,\sigma,P)\in\cY_\dseq(H)$, we can recover $\sigma^i=(\sigma_1^i,\dots,\sigma_{P_i}^i)$ for $i\in[m]$ as $(\sigma(j+1),\dots,\sigma(j+P_i))$, where $j=P_1+\cdots+P_{i-1}$. Now replace each mutable edge $e_i$ with a path from $e_i^-$ to $e_i^+$ whose sequence of internal vertices is $\sigma^i$, then take the union of the resulting graph with $C$. We obtain the unique graph $G\in\grd(H)$ which maps to $(C,\sigma,P)$ under $\phi$. 

Writing $\cyc(G)$ for the number of vertices in cycle components of $G$, if $G$ and $(C,\sigma,P)$ correspond under $\phi$, then $\cyc(G)=v(C)$ and $|G(e_i)|=P_i$ for all $i\in[m]$. In particular, $G$ has no cycle components if and only if $C=\varnothing$. Defining $\cY_\dseq^-(H)=\cS_{[h]}\times\cP_{m,h}$, the map $\phi:\grd(H)\to\cY_\dseq(H)$ therefore restricts to a bijection $\phi^-:\gmd(H)\to\cY_\dseq^-(H)$. Now observe that $|\cY_\dseq(H)|$ and $|\cY_\dseq^-(H)|$ only depend on $H$ through $h$ and $m$, which are in turn both determined by $V(H)$ and $S(H)$. Indeed $h=n-|V(H)|$ and $m=e(H)-(e(S(H))-m(S(H)))=\sum_{v\in V(H)}d_v/2-e(S(H))+m(S(H))$. If $\bG\in_u\cG_\dseq$ or $\bG\in_u\cG_\dseq^-$ then letting $\bH=H(\bG)$, it follows that for any $V\subset[n]$ and simple kernel $S$ with $\p{V(\bH)=V,S(\bH)=S}$, conditional on $V(\bH)=V$ and $S(\bH)=S$, we have $\bH\in_u\cG_{\dseq_{V}}^-(S)$. This establishes the following distributional identities.

\begin{prop}\label{prop:cycle_comps}
    Fix a degree sequence $\dseq=(d_1,\dots,d_n)$, and let $H$ be the simple homeomorphic reduction of a graph in $\grd$ with $m\geq 1$ mutable edges $e_1,\dots,e_m$ listed in lexicographic order. Let $\bG\in_u\grd(H)$, $\bG^-\in_u\gmd(H)$, $(\bC,\boldsigma,\bP)\in_u\cY_\dseq(H)$, $(\boldsigma^-,\bP^-)\in_u\cY_\dseq^-(H)$. Then
    \begin{align*}
        (\cyc(\bG),|\bG(e_1)|,\dots,|\bG(e_m)|) &\stackrel{d}{=}(v(\bC),\bP_1,\dots,\bP_m)\text{, and}\\
        (|\bG^-(e_1)|,\dots,|\bG^-(e_m)|) &\stackrel{d}{=}(\bP_1^-,\dots,\bP_m^-).
    \end{align*}
\end{prop}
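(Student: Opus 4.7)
The plan is to derive Proposition~\ref{prop:cycle_comps} directly from the bijections $M:\grd(H)\to\cX_\dseq(H)$ and $M^-:\gmd(H)\to\cX_\dseq^-(H)$ constructed in the paragraphs immediately preceding the statement. The core observation is the standard fact that the pushforward of a uniform distribution through a bijection is uniform on the target set; everything else is bookkeeping about what the coordinates of $M$ represent.

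First, since $M$ is a bijection and $\bG\in_u\grd(H)$, the random triple $M(\bG)$ is uniformly distributed on $\cX_\dseq(H)$, so $M(\bG)\stackrel{d}{=}(\bC,\boldsigma,\bP)$. Next, it suffices to verify directly from the definition of $M$ that for any $G\in\grd(H)$ with $M(G)=(C,\sigma,P)$, we have $\cyc(G)=v(C)$ (since $C$ was defined to be precisely the union of the cycle components of $G$, which is $2$-regular on their vertex set) and $|G(e_i^*)|=P_i$ for every $i\in[m]$ (since the $i$th entry of $P$ was defined as $|G(e_i^*)|$). Applying these coordinate identities to $M(\bG)$ and comparing with the uniform element $(\bC,\boldsigma,\bP)$ of $\cX_\dseq(H)$ yields the first distributional identity.

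The second identity is obtained analogously by restricting to $M^-:\gmd(H)\to\cX_\dseq^-(H)$, which corresponds to imposing $V(C)=\varnothing$ in the encoding. Uniformity of $\bG^-$ on $\gmd(H)$ transfers to uniformity of $M^-(\bG^-)$ on $\cX_\dseq^-(H)=S_{[h]}\times\cP_{m,h}$, and reading off the composition coordinate recovers $(|\bG^-(e_1^*)|,\ldots,|\bG^-(e_m^*)|)$ on the left and $(\bP_1^-,\ldots,\bP_m^-)$ on the right. There is no real obstacle here: the heavy lifting was already done in constructing $M$ and exhibiting its explicit inverse. The only point worth stating pedantically in a formal write-up is that each coordinate on the left of the two claimed identities is a measurable function of $M(\bG)$ (respectively $M^-(\bG^-)$) coinciding with the corresponding coordinate on the right, so the equalities in joint distribution are immediate consequences of the uniformity of $M(\bG)$ and $M^-(\bG^-)$.
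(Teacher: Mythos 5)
Your proposal is correct and mirrors the paper's own argument: the proposition is presented as an immediate corollary of the explicit bijection $M:\grd(H)\to\cX_\dseq(H)$ and its restriction $M^-$, with the distributional identities following from the fact that a bijection pushes the uniform measure forward to the uniform measure and from reading off what the coordinates of $M(G)$ encode. Nothing is missing, and there is no meaningful difference from the paper's route.
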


When $n_2(\dseq)<(1-\eps)n$ and $\bG\in_u\grd$, Proposition~\ref{prop:cycle_comps} implies that $\cyc(\bG)$ is of order $\log(1/\eps)/\eps$ with exponential tails on scale $1/\eps$.

\begin{lem}\label{lem:cycle_tail}
    There exists $C>0$ such that for all $\eps>0$, the following holds. Let $\dseq=(d_1,\dots,d_n)$ be a degree sequence such that $n_2(\dseq)<(1-\eps)n$, and let $\bG\in_u\grd$. Then for all $x\geq 0$,
    \begin{equation*}
        \p{\cyc(\bG)\geq \frac{1}{\eps}\left(10\log\left(\frac{1}{\eps}\right)+x\right)} < \exp\left(-\frac{x}{10}+C\right).
    \end{equation*}
\end{lem}

\begin{proof}
    It suffices to prove the result with $\bG\in_u\cG_\dseq(H)$ for an arbitrary graph $H$ such that $\p{H(\bG)=H}>0$. Once this is done, averaging over $H$ recovers the lemma. Fix such a graph $H$, let $h=n-v(H)$ and let $m=m(H)$. Then $h\leq n_2(\dseq)<(1-\eps)n$ so $v(H)>\eps n$.

    Let $c_k=|\cR_{[k]}|$ be the number of $2$-regular graphs with vertex set $[k]$. It is shown in \cite[Example VI.2]{MR2483235} that $c_k=\frac{e^{-3/4}}{\sqrt{\pi k}} k!(1-5/8k+O(1/k^2))$ as $k\to\infty$, and it follows that there exists $k_0$ such that for all $k\geq k_0$, we have
    \begin{equation}\label{eq:2reg}
        \frac{c_{k+1}}{c_k} \leq \frac{k+1}{1-6/8k} \leq (1+1/k)(k+1).
    \end{equation}
    Recalling that $m\geq e(H)/3$, we have
    \begin{equation}\label{eq:6m>=epsh}
        6m \geq 2e(H) = \sum_{v\in V(H)}d_v
        \geq v(H) > \eps n\geq \eps h.
    \end{equation}
    By Proposition~\ref{prop:cycle_comps} and by definition of $\cY_\dseq(H)$, 
    \begin{equation*}
        \p{\cyc(\bG)=k} \propto \left|\bigcup_{V\subset [h],|V|=k}\cR_V\times S_{[h]\setminus V}\times\cP_{m,h-k}\right|={h\choose k}c_k(h-k)!{m-1+h-k\choose m-1}.
    \end{equation*}
    A quick calculation shows that if $k\geq 50/\eps+4$, then $(k-3)/((1+\eps/6)k-4)<(1+\eps/50)(1+\eps/6)$. If also $k\geq k_0$, then $c_{k+1}/c_k\leq (1+\eps/50)(k+1)$ by (\ref{eq:2reg}). Setting $k_1=\max(k_0,50/\eps+4)$, then for all $k\in[k_1,h)$, we have 
    \begin{align*}
       \frac{\p{\cyc(\bG)=k+1}}{\p{\cyc(\bG)=k}} &= \frac{h-k}{k+1}\cdot\frac{c_{k+1}}{c_k}\cdot\frac{1}{h-k}\cdot \frac{h-k}{m-1+h-k}\\
       &\leq \frac{c_{k+1}/c_k}{k+1}\cdot\frac{h-3}{m+h-4}\\
       &\leq \frac{c_{k+1}/c_k}{k+1}\cdot \frac{h-3}{(1+\eps/6)h-4}\\
       &\leq \frac{(1+\eps/50)^2}{1+\eps/6},
    \end{align*}
    where the second inequality comes from (\ref{eq:6m>=epsh}). Since $\eps\in(0,1]$, we have $(1+\eps/50)^2/(1+\eps/6)<1-\eps/10< \exp(-\eps/10)$. Thus, for all $k\geq k_1$,
    \begin{equation*}
        \p{\cyc(\bG)=k+1} < \exp\left(-\frac{\eps}{10}\right)\p{\cyc(\bG)=k}.
    \end{equation*}
    Applying this inequality repeatedly gives $\p{\cyc(\bG)=k}\leq \exp(-\eps(k-k_1)/10)$ for all $k\geq k_1$. This bound also holds for $k<k_1$, so for all $y\geq 0$,
    \begin{align*}
        \p{\cyc(\bG)\geq y} &= \sum_{k\geq y}\p{\cyc(\bG)=k}\\
        &< \sum_{k\geq y}\exp(-\eps(k-k_1)/10)\\
        &< \frac{\exp(\eps k_1/10)}{1-\exp(-\eps/10)}\exp\left(-\frac{\eps y}{10}\right).
    \end{align*}
    As $k_1=O(1/\eps)$, the coefficient is $O(1)/(1-\exp(-\eps /10))=O(1/\eps)$, so
    \begin{equation*}
        \p{\cyc(\bG)\geq y} \leq \exp\left(-\frac{\eps y}{10}+\log\left(\frac{1}{\eps}\right)+C\right)
    \end{equation*}
    for an absolute constant $C$. Setting $y=(10\log(1/\eps)+x)/\eps$ completes the proof.
\end{proof}

For a random graph $\bG$ as in Theorem~\ref{thm:main} or as in Theorem~\ref{thm:disco}, our next lemma, will allow us to control the diameter of $\bG$ given $H(\bG)$, and also to control the diameter of $C(\bG)$ given $S(\bG)$. 

\begin{lem}\label{lem:homeo_diam}
    Fix a graph $H$ and a set of $m\geq e(H)/3$ edges $\{e_1,\dots,e_m\}\subset E(H)$. Let $\bG$ be a random graph obtained by replacing each $e_i$ by a path $\bG(e_i)$. Suppose that $(|\bG(e_1)|,\dots,|\bG(e_m)|)\in_u\cP_{m,h}$ for some $h\geq0$. Then for all $x\geq0$,
    \begin{equation*}
        \p{\diam(\bG)\geq \left(2+\frac{3h}{m}\right)(\diam(H)+8\log m+x)} \leq 8\exp\left(-\frac{x}{4}\right).
    \end{equation*}
\end{lem}

\begin{proof}
    Notice that if $m\leq 2$ then $(2+3h/m)\diam(H)>\diam(H)+h$. But subdividing a graph $h$ times can at most increase its diameter by $h$, so $\diam(\bG)\leq\diam(H)+h$. Therefore if $m\leq 2$, then $\diam(\bG)<(2+3h/m)\diam(H)$ deterministically and there is nothing to show. For the rest of the proof, we can therefore assume that $m\geq3$.
    
    For $e\in E(H)$ write $\len_\bG(e)$ for the length of the corresponding path in $\bG$. Then $\len_\bG(e_i)=|\bG(e_i)|+1$ for $i\in[m]$, and $\len_\bG(e)=1$ for $e\in E(H)\setminus\{e_1,\dots,e_m\}$. Extend $\len_\bG$ to subsets $E\subset E(H)$ by defining $\len_\bG(E)=\sum_{e\in E}\len_\bG(e)$. Now fix $M\subset \{e_1,\dots,e_m\}$ and let $k=|M|$. With $\bP\in_u\cP_{m,h}$, the entries $(\bP_1,\dots,\bP_m)$ are exchangeable, so $\len_\bG(M) \stackrel{d}{=} \sum_{i=1}^k(\bP_i+1)$. It therefore follows (for example, from the stars and bars encoding of $\cP_{m,h}$) that $\len_\bG(M)$ has the same distribution as that of the random variable $\bX_k$ described by the following experiment. Consider an urn with $m-1$ white balls and $h$ black balls. Repeatedly sample balls from the urn without replacement until none remain. For $i\in[m-1+h]$, let $\bW_i$ be the number of white balls observed after $i$ balls in total have been drawn. Also set $\bW_0=0$ and $\bW_{m+h}=m$. Let $\bX_k=\min(i\in[0,m+h]:\bW_i=k)$. 
    
    It follows from \cite[Proposition 20.6]{MR883646} that, letting $\bW_i'$ be a $\operatorname{Binomial}(i,(m-1)/(m-1+h))$ random variable, then $\E{\phi(\bW_i)}\leq\E{\phi(\bW_i')}$ for any convex function $\phi:\mathbb{R}\to[0,\infty]$. Hence, tail bounds for $\bW_i'$ that are proved by bounding exponential moments $\E{\exp(\lambda\bW_i')}$ can also be applied to $\bW_i$. Now fix $y\geq 2$ and let $z=yk(1+h/(m-1))$. Then $\E{\bW_{\lceil z\rceil}}\geq yk\geq 2k$ and thus a Chernoff bound with $\lambda=1/2$ gives
    \begin{align}\label{eq:X_k}
    \begin{split}
        \p{\bX_k\geq yk\left(1+\frac{h}{m-1}\right)}
        &\leq \p{\bW_{\lceil z\rceil}\leq k}\\
        &\leq \p{\bW_{\lceil z\rceil}\leq \frac{1}{2}\E{\bW_{\lceil z\rceil }}}\\
        &\leq \exp\left(-\frac{yk}{8}\right).
    \end{split}
    \end{align}
    If $k=|M|\leq 8\log m+x$, then taking $y=2(8\log m+x)/k\geq2$ in (\ref{eq:X_k}) yields
    \begin{equation*}
        \p{\len_\bG(M)\geq 2(8\log m+x)\left(1+\frac{h}{m-1}\right)} \leq \exp\left(-\frac{x}{4}-2\log m\right),
    \end{equation*}
    and if $k=|M|>8\log m+x$, then taking $y=2$ in (\ref{eq:X_k}) yields
    \begin{align*}
        \p{\len_\bG(M)\geq 2|M|\left(1+\frac{h}{m-1}\right)} \leq \exp\left(-\frac{|M|}{4}\right)\leq \exp\left(-\frac{x}{4}-2\log m\right).
    \end{align*}
    In either case,
    \begin{equation}\label{eq:len(E*)}
        \p{\len_\bG(M)\geq 2\left(1+\frac{h}{m-1}\right)(|M|+8\log m+x)}\leq \exp\left(-\frac{x}{4}-2\log m\right).
    \end{equation}
    
    Now for each pair of edges $e,f\in E(H)$ in the same component of $H$, let $E_{ef}$ consist of $e$, $f$, and the edges from a shortest path joining $e^-$ to $f^-$ in $H$. Then $|E_{ef}|\leq \diam(H)+2$. Write $M_{ef}=E_{ef}\cap \{e_1,\dots,e_m\}$ and note that if $\len_\bG(M_{ef}) <  2(1+h/(m-1))(|M_{ef}|+8\log m+x-2)$, then
    \begin{align*}
        \len_\bG(E_{ef}) &= |E_{ef}| - |M_{ef}|+\len_\bG(M_{ef})\\
        &< |E_{ef}|-|M_{ef}|+2\left(1+\frac{h}{m-1}\right)(|M_{ef}|+8\log m+x-2)\\
        &\leq 2\left(1+\frac{h}{m-1}\right)(|E_{ef}|+8\log m+x-2)\\
        &\leq 2\left(1+\frac{h}{m-1}\right)(\diam(H)+8\log m+x).
    \end{align*}
    
    Each $v\in V(\bG)$ lies on $\bG(e)$ for some $e\in E(H)$, so for each pair $u,v\in V(\bG)$ in the same component of $\bG$, there exist $e,f\in E(H)$ in the same component of $H$ such that $\dist_\bG(u,v)\leq\len_\bG(E_{ef})$. A union bound and (\ref{eq:len(E*)}) then gives
    \begin{align*}
        &\p{\diam(\bG)\geq 2\left(1+\frac{h}{m-1}\right)(\diam(H)+8\log m+x)}\\
        &\leq \sum_{e,f \in E(H)}\p{\len_\bG(E_{ef})\geq 2\left(1+\frac{h}{m-1}\right)(\diam(H)+8\log m+x)}\\
        &\leq \sum_{e,f\in E(H)}\p{\len_{\bG}(M_{ef})\geq 2\left(1+\frac{h}{m-1}\right)(|M_{ef}|+8\log m+x-2)}\\
        &\leq {e(H)\choose 2}\exp\left(-\frac{x-2}{4}-2\log m\right)\\
        &\leq \frac{(3m)^2}{2}\cdot\frac{e^{1/2}}{m^2}\cdot\exp\left(-\frac{x}{4}\right)\\
        &\leq 8\exp\left(-\frac{x}{4}\right),
    \end{align*}
    where in the second to last inequality we have used that $m\geq e(H)/3$. Now $2h/(m-1)\leq 3h/m$ because $m\geq 3$, so this completes the proof.
\end{proof}

Combining Proposition~\ref{prop:bij} and Theorem~\ref{thm:tail_bound} gives a tail bound on $v(\bC)-v(S)$. Then conditional on $v(\bC)-v(S)=h$, Proposition~\ref{prop:bij} implies that $(|\bC(e_1)|,\dots,|\bC(e_m)|)\in_u\cP_{m,h}$, where $\{e_1,\dots,e_m\}=M(S)$. Thus conditional on $v(\bC)-v(S)=h$, Lemma~\ref{lem:homeo_diam} can be applied to $\bC$. This reasoning leads to the following tail bound on $\diam(\bC)$.

\begin{lem}\label{lem:core_diam}
    Fix a degree sequence $\dseq=(d_1,\dots,d_n)$ and a simple kernel $S$ with $m\geq 1$ mutable edges. Suppose that $\dseq_{[n]\setminus V(S)}$ is 2-free. Let $\bG\in_u\grd^-(S)$ and let $\bC=C(\bG)$. Then for all $x\geq 0$,
    \begin{equation*}
        \p{\diam(\bC)\geq (2+14\sqrt{n/m})(\diam(S)+8\log m+x)} \leq \exp\left(-\frac{x}{4}+3\right).
    \end{equation*}
\end{lem}

\begin{proof}
    The graph $\bG^-$ obtained from $\bG$ by removing all tree components has $C(\bG^-)=\bC$ and $S(\bG^-)=S$. In fact, for any $V\subset [n]$ with $\p{V(\bG^-)=V}>0$, conditionally given that $V(\bG^-)=V$, we have $\bG^-\in_u\cG_{\dseq_{V}}^=(S)$. The claimed bound only becomes stronger if $n$ is replaced by $|V|$, so we will carry out the proof with $\bG\in_u\grd^=(S)$ instead of $\bG\in_u\grd^-(S)$. 

    Let $(\bF,\bT,\bP)\in_u\cX_\dseq(S)$, so $v(\bC)-v(S)\eqdist\h_\bT(0)$ by Proposition~\ref{prop:bij}. Proposition~\ref{prop:bij} also implies that for any $h\geq0$, conditional on $v(\bC)-v(S)=h$, we have $(|\bC(e_1)|,\dots,|\bC(e_m)|)\in_u\cP_{m,h}$, where $e_1,\dots,e_m$ are the mutable edges of $S$ in lexicographic order. As mentioned in Section~\ref{sec:simple_kernel}, for any $0\in B\subset[0,n]$ with $\p{V(\bT)=B}>0$, there is a 1-free child sequence $\cseq=(c_v,v\in B)$ with $c_0=0$ such that conditional on $V(\bT)=B$, we have $(\bT,\bP)\in_u\{T\in\cT_\cseq,P\in\cP_{m,\h_T(0)}\}$. Now $(2\sqrt{m}+\sqrt{3y})^2/3\geq y+3$ for $y\geq 1$, so if $|B|-1\geq 64m$, then Theorem~\ref{thm:tail_bound} applied with $x=2\sqrt{m}+\sqrt{3y}$ yields
    \begin{equation}\label{eq:withB}
        \p{\h_\bT(0)\geq 4\sqrt{m(|B|-1)}+\sqrt{3y(|B|-1)}\mid V(\bT)=B} \leq \exp\left(-y+1\right).
    \end{equation}
    But if $y<1$ then this bound is greater than 1, and if $|B|-1<64m$, then $4\sqrt{m(|B|-1)}>(|B|-1)/2$, which since $\cseq$ is 1-free, is an upper bound on the height of any tree in $\cT_\cseq$. Thus (\ref{eq:withB}) holds for all $y$ and $B$. Because $|B|-1\leq n$ and $v(\bC)-v(S)\eqdist\h_\bT(0)$, averaging over $B$ yields that for all $y\geq0$,
    \begin{equation}\label{eq:C-S}
        \p{v(\bC)-v(S)\geq 4\sqrt{mn}+\sqrt{3yn}}\leq \exp\left(-y+1\right).
    \end{equation}
    On the other hand $m\geq e(S)/3$, and for any $h\geq0$, given that $v(\bC)-v(S)=h$, we have $(|\bC(e_1)|,\dots,|\bC(e_m)|)\in_u\cP_{m,h}$. Using Lemma~\ref{lem:homeo_diam} applied with $x=4y$ and averaging over $h$ yields that for all $y\geq0$, 
    \begin{equation}\label{eq:CgivenV(C)}
        \p{\diam(\bC)\geq\left(2+\frac{3(v(\bC)-v(S))}{m}\right)(\diam(S)+8\log m+4y)}\leq 8\exp\left(-y\right).
    \end{equation}

    Now for $y\leq 4m/27$, if $v(\bC)-v(S)<4\sqrt{mn}+\sqrt{3yn}$ and $\diam(\bC)<(2+3(v(\bC)-v(S))/m)(\diam(S)+8\log m+4y)$, then
    \begin{align*}
        \diam(\bC) &< (2+12\sqrt{n/m}+3\sqrt{3yn}/m)(\diam(S)+8\log m+4y)\\
        &\leq (2+14\sqrt{n/m})(\diam(S)+8\log m+4y).
    \end{align*}
    For $y>4m/27$, we have $4\sqrt{mn}+\sqrt{3yn}\leq 2\sqrt{27yn}+\sqrt{3yn}\leq 13\sqrt{(27y/(4m))yn}\leq (14\sqrt{n/m})(4y)$. Since $\diam(\bC)-\diam(S)\leq v(\bC)-v(S)$, in this case if $v(\bC)-v(S)<4\sqrt{mn}+\sqrt{3yn}$, then
    \begin{align*}
        \diam(\bC) &\leq \diam(S)+v(\bC)-v(S)\\
        &< \diam(S)+4\sqrt{mn}+\sqrt{3yn}\\
        &\leq (2+14\sqrt{n/m})(\diam(S)+8\log m+4y).
    \end{align*}
    Thus for all $y$, if $\diam(\bC)\geq (2+14\sqrt{n/m})(\diam(S)+8\log m+4y)$, then one of the events whose probability is bounded in (\ref{eq:C-S}) or (\ref{eq:CgivenV(C)}) must occur, so
    \begin{align*}
        &\p{\diam(\bC)\geq (2+14\sqrt{n/m})(\diam(S)+8\log m+4y)} \leq \exp(-y+1)+8\exp(-y)\\
        &\leq \exp(-y+3).
    \end{align*}
    Taking $y=x/4$ completes the proof.
\end{proof}

Before proving Theorem~\ref{thm:main}, we note that the number of edges in the kernel and the number of mutable edges in the simple kernel can only vary by absolute constant factors across connected graphs with a given degree sequence. More precisely, fix a degree sequence $\dseq=(d_1,\dots,d_n)$ with $n_2(\dseq)<n$. Let $G\in\connd$ and let $S=S(G)$ and $K=K(G)$. If $K$ is empty then $e(K)=m(S)=0$ and if $K$ is a loop then $e(K)=m(S)=1$. Otherwise, $K$ has minimum degree at least 3 so $s(K)\leq e(K)\leq 3s(K)$. Recall that $e(K)/2\leq m(S)\leq e(K)$. Since $n_2(\dseq)<n$, $G$ is not a cycle, so $s(K)=s(G)=s(\dseq)$. Summarizing, we have $s(\dseq)\leq e(K)\leq 3s(\dseq)$ and $s(\dseq)/2\leq m(S)\leq 3s(\dseq)$.

\begin{proof}[Proof of Theorem~\ref{thm:main}]
    Fix $\eps\in(0,1]$ and a degree sequence $\dseq=(d_1,\dots,d_n)$ such that $n_2(\dseq)<(1-\eps)n$, and let $\bG\in_u\connd$. Let $\bH=H(\bG)$, $\bF=F(\bH)$, $\bC=C(\bH)$, $\bS=S(\bH)=S(\bG)$, and $\bK=K(\bH)=K(\bG)$. Note that $v(\bH)\geq \eps n$ because all vertices of degree not equal to 2 lie in $\bH$. Let $s=s(\dseq)$, so $s\leq e(\bK)\leq 3s$ and $s/2\leq m(\bS)\leq 3s$ deterministically. Our proof breaks into four pieces. First, we use Theorem~\ref{thm:ker_diam} to show that $\E{\diam(\bS)}=O(\log(s+1)+1)$. Second, we use Lemma~\ref{lem:core_diam} to show that $\E{\diam(\bC)\mid\bS,v(\bH)}=O((1+\sqrt{v(\bH)/(s+1)})(\diam(\bS)+\log(s+1)+1))$. Third, we use Theorem~\ref{thm:ad24} to show that $\E{\h(\bF)\mid \bS,v(\bH)}=O(\sqrt{v(\bH)})$. Fourth, we use Lemma~\ref{lem:homeo_diam} to show that $\E{\diam(\bG)\mid\bH}=O((n/v(\bH))(\diam(\bH)+\log v(\bH)))$. Combining these four bounds and using that $v(\bH)\geq\eps n$ allows us to conclude.\\
    \\
    \textbf{Step 1.} Let $c$ be the implicit absolute constant from Theorem~\ref{thm:ker_diam}, and suppose for now that $s\geq 2c$. Consider a degree sequence $\dseq'$ with $\p{\dseq(C(\bG))=\dseq'}>0$, and let $\bC'\in_u\cG_{\dseq'}^-$. Then $\dseq'$ is 1-free and letting $2m$ be the sum of degrees in $\dseq'$ not equal to 2, if $C'\in\cG_{\dseq'}^-$, then $K(C')$ has $m$ edges. Moreover $\p{e(\bK)=m}>0$, so $m\geq s\geq 2c$. Conditional on $\bC'$ being connected, $\bC'\in_u\cC_{\dseq'}$, and conditional on $\dseq(C(\bG))=\dseq'$, we have $C(\bG)\in_u\cC_{\dseq'}$. Hence $(C(\bG)\mid \dseq(C(\bG))=\dseq')\eqdist(\bC'\mid\text{$\bC'$ is connected})$. Now letting $\bK'=K(\bC')$, we have $\p{\diam^+(\bK')\geq 31\log m}\leq c/m$ by Theorem~\ref{thm:ker_diam}. Since $\bK'$ is connected if and only if $\bC'$ is connected, this in particular implies $\p{\text{$\bC'$ is connected}}\geq 1-c/m\geq 1/2$. Now $K(C(\bG))=\bK$ because $s(\dseq)\geq 2c\geq 2$, so we obtain
    \begin{align*}
        \p{\diam(\bK)\geq 31\log m\mid \dseq(C(\bG))=\dseq'} &= \p{\diam(\bK')\geq 31\log m\mid\text{$\bC'$ is connected}}\\
        &\leq \frac{\p{\diam^+(\bK')\geq 31\log m}}{\p{\text{$\bC'$ is connected}}}\\
        &\leq \frac{2c}{m}
    \end{align*}
    Summing the tail of $(\diam(\bK)\mid \dseq(C(\bG))=\dseq')$ then gives
    \begin{equation*}
        \E{\diam(\bK)\mid \dseq(C(\bG))=\dseq'}\leq 31\log m+2c\leq 31\log(3s)+2c,
    \end{equation*}
    the second inequality holding because $\p{e(\bK)=m'}>0$ and $e(\bK)\leq 3s$. Averaging over $\dseq'$ and recalling that $\diam(\bS)\leq 2\diam(\bK)+2$, we obtain $\E{\diam(\bS)}\leq 2(31\log (3s)+2c)+2$. If $s<2c$, then $\diam(\bK)\leq e(\bK)\leq 3s<6c$, and hence $\diam(\bS)\leq 2(6c)+2$ deterministically. Because $c$ is an absolute constant, we conclude that $\E{\diam(\bS)}=O(\log(s+1)+1)$ regardless of whether $s\geq 2c$ or $s<2c$.\\
    \\
    \textbf{Step 2.} For now suppose that $s\geq 1$. Fix $V\subset[n]$ and a simple kernel $S$ with $\p{V(\bH)=V,\bS=S}>0$, and let $m'=m(S)$. Then $m'\geq \lceil s/2\rceil\geq 1$ and $\dseq_{V\setminus V(S)}$ is 2-free. Moreover, conditionally given that $V(\bH)=V$ and $\bS=S$, we have $\bH\in_u\cC_{\dseq_{V}}(S)$. But since $S$ is connected, $\cC_{\dseq_{V}}(S)=\cG_{\dseq_{V}}^-(S)$, so by Lemma~\ref{lem:core_diam}, for all $x\geq0$,
    \begin{align*}
        &\p{\diam(\bC)\geq (2+14\sqrt{|V|/m'})(\diam(S)+8\log m'+x)\mid V(\bH)=V,\bS=S}\\
        &\leq \exp\left(-\frac{x}{4}+3\right).
    \end{align*}
    Now $(s+1)/3\leq\lceil s/2\rceil\leq m'\leq 3s+1$, so this implies that for all $x\geq0$,
    \begin{align*}
        &\p{\diam(\bC)\geq (2+14\sqrt{3v(\bH)/(s+1)})(\diam(\bS)+8\log(3s+1)+x)\mid\bS,v(\bH)}\\
        &\leq \exp\left(-\frac{x}{4}+3\right).
    \end{align*}
    If $s=0$, then $\bC$ is empty so the above bound holds trivially. It follows immediately that $\E{\diam(\bC)\mid\bS,v(\bH)} = O((1+\sqrt{v(\bH)/(s+1)})(\diam(\bS)+\log(s+1)+1)$.\\
    \\
    \textbf{Step 3.} For a fixed $V\subset[n]$ and simple kernel $S$ with $\p{V(\bH)=V,\bS=S}>0$, conditional on $V(\bH)=V$ and $\bS=S$, we have $\bH\in_u\cG_{\dseq_V}^-(S)$. For $U\subset V$ with $\p{V(\bF)=U,V(\bH)=V,\bS=S}>0$, it then follows from the same argument as in \ref{sec:forest height} that conditional on $V(\bF)=U$, $V(\bH)=V$, and $\bS=S$, we have $\bF\in_u\cF_\cseq$, where $\cseq=(c_v,v\in U)$ is the 1-free child sequence given by $c_v=d_v-1$. By Theorem~\ref{thm:ad24}, for all $x\geq 0$, 
    \begin{equation*}
        \p{\h(\bF)\geq x\sqrt{|U|}\mid V(\bF)=U,V(\bH)=V,\bS=S}\leq 4\exp(-x^2/2^8).
    \end{equation*}
    Since $|U|\leq v(\bH)$, averaging gives $\p{\h(\bF)\geq x\sqrt{v(\bH)}\mid \bS,v(\bH)}\leq 4\exp(-x^2/2^8)$ for all $x\geq 0$. This implies $\E{\h(\bF)\mid \bS,v(\bH)}=O(\sqrt{v(\bH)})$.
    \\
    \\
    \textbf{Step 4.} Fix a graph $H$ with $\p{\bH=H}>0$, so $H$ is connected and $\cG_\dseq^-(H)=\connd(H)$. Let $m''=m(H)$ and let $h=n-v(H)$. Then $m''\geq 1$ because $n_2(\dseq)<n$. Conditionally given that $\bH=H$, we have $\bG\in_u\connd(H)=\cG_\dseq^-(H)$, and hence $(|\bG(e_1)|,\dots,|\bG(e_{m''})|)\in_u\cP_{m'',h}$ by Proposition~\ref{prop:cycle_comps}. Also $m''\geq e(H)/3$, so by Lemma~\ref{lem:homeo_diam}, for all $x\geq 0$,
    \begin{equation*}
        \p{\diam(\bG)\geq \left(2+\frac{3h}{m''}\right)(\diam(H)+8\log m''+x)\mid\bH=H} \leq 8\exp\left(-\frac{x}{4}\right).
    \end{equation*}
    Taking $\eps'=v(H)/n$, we have $v(H)\geq \eps'n$, so exactly as in (\ref{eq:6m>=epsh}) from the proof of Lemma~\ref{lem:cycle_tail}, $6m''\geq \eps'h=v(H)h/n$, and therefore $3h/m''\leq 18n/v(H)$. Also $m''\leq {v(H)\choose 2}$ so $\log m''\leq 2\log v(H)$. The preceding bound therefore gives, for all $x\geq 0$,
    \begin{equation*}
        \p{\diam(\bG)\geq \left(2+\frac{18n}{v(\bH)}\right)(\diam(\bH)+16\log v(\bH)+x)\mid\bH}\leq 8\exp\left(-\frac{x}{4}\right).
    \end{equation*}
    Because $n\geq v(\bH)\geq 2$, it follows that $\E{\diam(\bG)\mid \bH}=O((n/v(\bH))(\diam(\bH)+\log v(\bH)))$.\\
    \\
    Combining the result of Step 1 and of Step 2 with the bound $\diam(\bH)\leq 2(\h(\bF)+1)+\diam(\bC)$ gives
    \begin{equation*}
        \E{\diam(\bH)\mid\bS,v(\bH)} = O\left(\sqrt{v(\bH)}+\left(1+\sqrt{\frac{v(\bH)}{s+1}}\right)(\diam(\bS)+\log(s+1)+1)\right).
    \end{equation*}
    Now $\bH$ determines $\bS$ and $v(\bH)$, so $\E{\diam(\bG)\mid\bS,v(\bH)}=\E{\E{\diam(\bG)\mid\bH}\mid\bS,v(\bH)}$. Combining the result of Step 4 with the preceding bound therefore yields
    \begin{align*}
        &\E{\diam(\bG)\mid\bS,v(\bH)}\\
        &= O\left(\frac{n}{\sqrt{v(\bH)}}+\left(\frac{n}{v(\bH)}+\frac{n}{\sqrt{(s+1)v(\bH)}}\right)(\diam(\bS)+\log(s+1)+1)+\frac{n\log v(\bH)}{v(\bH)}\right)\\
        &= O\left(\sqrt{\frac{n}{\eps}}+\left(\frac{1}{\eps}+\sqrt{\frac{n}{\eps(s+1)}}\right)(\diam(\bS)+\log(s+1)+1)\right),
    \end{align*}
    where we have used that $\log v(\bH)=O(\sqrt{v(\bH)})$ and $v(\bH)\geq \eps n$ deterministically. Averaging over $\bS$ and $v(\bH)$ and using the result of Step 1, we obtain
    \begin{align*}
        \E{\diam(\bG)} &= O\left(\sqrt{\frac{n}{\eps}}+\left(\frac{1}{\eps}+\sqrt{\frac{n}{\eps(s+1)}}\right)(\log(s+1)+1)\right)\\
        &= O\left(\sqrt{\frac{n}{\eps}}+\frac{\log(s+1)+1}{\eps}\right)\\
        &= O\left(\sqrt{n/\eps}+\log(n)/\eps\right),
    \end{align*}
    where in the final line we have used that $s\leq {n\choose 2}$ so $\log(s+1)\leq 2\log n$.
\end{proof}

\begin{proof}[Proof of Theorem~\ref{thm:disco}]
    Fix $\eps\in(0,1]$ and a degree sequence $\dseq=(d_1,\dots,d_n)$ such that $n_2(\dseq)<(1-\eps)n$, and let $\bG\in_u\grd$. Let $\bH=H(\bG)$, $\bF=F(\bH)$, $\bC=C(\bH)$, $\bS=S(\bH)=S(\bG)$, and $\bK=K(\bH)=K(\bG)$. We proceed much like in the proof of Theorem~\ref{thm:main}, except that we work conditional on $e(\bK)$, which plays the role in this argument that surplus does in the proof of Theorem~\ref{thm:main}. We therefore fix, for the rest of the proof, some $m\geq 0$ with $\p{e(\bK)=m}>0$. Another small difference is that we need to bound the diameter of the cycle components using Lemma~\ref{lem:cycle_tail}.\\
    \\
    \textbf{Step 1.} Let $\bC'$ be obtained from $C(\bG)$ by removing all cycle components. Then for any degree sequence $\dseq'$ with $\p{\dseq(\bC')=\dseq'}>0$, conditionally given that $\dseq(\bC')=\dseq'$, we have $\bC'\in_u\cG_{\dseq'}^-$. Let $2m'$ be the sum of degrees not equal to 2 in $\dseq'$ and suppose for now that $m'\geq 1$. Then letting $\bK'=K(\bC')$, by Theorem~\ref{thm:ker_diam}, 
    \begin{equation*}
        \p{\diam^+(\bK')\geq 31\log m'\mid\dseq(\bC')=\dseq'} \leq \frac{c}{m'},
    \end{equation*}
    where $c$ is an absolute constant. Since $\diam(\bK')\leq \diam^+(\bK')$ and $\diam(\bK')\leq m'$, it follows that
    \begin{equation*}
        \E{\diam(\bK')\mid\dseq(\bC')=\dseq'} \leq 31\log (m'+1)+c.
    \end{equation*}
    The above holds trivially if $m'=0$ since then $\bK'$ is empty. The components of $\bK$ not in $\bK'$ are loops which have diameter 0, so $\diam(\bK)=\diam(\bK')$. Then since $e(\bK')\leq e(\bK)$, averaging over $\dseq$ and the loop components of $\bK$ yields $\E{\diam(\bK)\mid e(\bK)=m}=O(\log(m+1)+1)$. Since $\diam(\bS)\leq 2\diam(\bK)+2$, it follows that $\E{\diam(\bS)\mid e(\bK)=m}=O(\log(m+1)+1)$.\\
    \\
    \textbf{Step 2.} Fix $V\subset[n]$ and a simple kernel $S$ with $m''\geq 1$ mutable edges such that $\p{V(\bH)=V,\bS=S,e(\bK)=m}>0$. Then $m/2\leq m''\leq m$, $\dseq_{V\setminus V(S)}$ is 2-free, and conditionally given that $V(\bH)=V$ and $\bS=S$, we have $\bH\in_u\cG_{\dseq_V}^-(S)$. Therefore by Lemma~\ref{lem:core_diam}, for all $x\geq0$,
    \begin{align*}
        &\p{\diam(\bC)\geq (2+14\sqrt{|V|/m''})(\diam(S)+8\log m''+x)\mid V(\bH)=V,\bS=S}\\
        &\leq \exp\left(-\frac{x}{4}+3\right).
    \end{align*}
    Since $(m+1)/3\leq\lceil m/2\rceil\leq m''\leq m+1$, this implies
    \begin{align*}
        &\p{\diam(\bC)\geq (2+14\sqrt{3|V|/(m+1)})(\diam(S)+8\log(m+1)+x)\mid V(\bH)=V,\bS=S}\\
        &\leq \exp\left(-\frac{x}{4}+3\right).
    \end{align*}
    This bound also holds if $m''=0$ though, since then $\bC=\varnothing$. As $\bS$ determines $\bK$, we obtain
    \begin{align*}
        &\p{\diam(\bC)\geq (2+14\sqrt{3v(\bH)/(m+1)})(\diam(\bS)+8\log(m\!+\!1)+x)\!\mid\!\bS,v(\bH),e(\bK)=m}\\
        &\leq \exp\left(-\frac{x}{4}+3\right).
    \end{align*}
    It follows that
    \begin{equation*}
        \E{\diam(\bC)\mid\bS,v(\bH),e(\bK)=m}=O((1+\sqrt{v(\bH)/(m+1)})(\diam(\bS)+\log(m+1)+1)),
    \end{equation*}
    as needed.\\
    \\
    \textbf{Step 3.} Precisely the same argument as in Step 3 of the proof of Theorem~\ref{thm:main} shows that $\E{\h(\bF)\mid \bS,v(\bH),e(\bK)=m}=O(\sqrt{v(\bH)})$.\\
    \\
    \textbf{Step 4.} Obtain $\bG^-$ from $\bG$ by removing the cycle components. Then for any graph $H$ with $\p{\bH=H,e(\bK)=m}>0$, conditionally given that $\bH=H$, we have $\bG^-\in_u\grd^-(H)$. Because $\bH$ determines $e(\bK)$, precisely the same reasoning as in Step (4) in the proof of Theorem~\ref{thm:main} shows that $\E{\diam(\bG^-)\mid\bH,e(\bK)=m}=O((n/v(\bH))(\diam(\bH)+\log v(\bH)))$.\\
    \\
    Combining the results so far as in the proof of Theorem~\ref{thm:main} but with $s$ replaced by $m$ yields $\E{\diam(\bG^-)\mid e(\bK)=m}=O(\sqrt{n/\eps}+\log(n)/\eps)$. Averaging over $m$ then gives $\E{\diam(\bG^-)}=O(\sqrt{n/\eps}+\log(n)/\eps)$. By Lemma~\ref{lem:cycle_tail}, there is an absolute constant $c$ such that for all $x\geq 0$, $\p{\cyc(\bG)\geq (10\log(1/\eps)+x)/\eps}<\exp(-x/10+c)$. This gives $\E{\cyc(\bG)}=O(\log(1/\eps)/\eps)$. But $n_2(\dseq)<(1-\eps)n$ implies that $n_2(\dseq)\leq n-1 <(1-2/n)n$, so we can assume that $\eps\geq 2/n$. Then $\log(1/\eps)=O(\log n)$, so $\E{\cyc(\bG)}=O(\log(n)/\eps)$. Because $\diam(\bG)\leq \max(\diam(\bG^-),\cyc(\bG))$, we conclude that $\E{\diam(\bG)}=O(\sqrt{n/\eps}+\log(n)/\eps)$.
\end{proof}

\begin{lem}\label{lem:subgaussian}
    Fix $n>0$ and $n_1,\dots,n_k\geq 0$ such that $n_1+\cdots+n_k\leq n$. Also fix $\alpha\geq0$ and $\beta\geq 1$. Let $(\bX_1,\dots,\bX_k)$ be random variables such that for all $i\in[k]$ and for all $x\geq 0$, $\p{\bX_i\geq x\sqrt{n_i}} \leq \exp(-\alpha x^2+\beta)$. Then for all $x\geq 0$,
    \begin{equation*}
        \p{\max(\bX_1,\dots,\bX_k)\geq x\sqrt{n}} \leq \exp(-\alpha x^2+\beta).
    \end{equation*}
\end{lem}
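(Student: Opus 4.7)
The plan is to reduce to the case $\alpha>0$ (the bound is trivial when $\alpha\leq 0$, since $\exp(-\alpha x^2+\beta)\geq e^\beta\geq e$), and then apply the union bound after rescaling each hypothesized tail bound from scale $\sqrt{n_i}$ to scale $\sqrt{n}$. The only subtlety is that a naive union bound picks up a factor of $k$, which must be absorbed using a pointwise inequality that is valid only in a range of $x$; the range outside is precisely where $\beta\geq 1$ makes the conclusion trivial.

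I would split on the size of $\alpha x^2$. If $\alpha x^2\leq 1$, then since $\beta\geq 1$ we have $-\alpha x^2+\beta\geq 0$, so $\exp(-\alpha x^2+\beta)\geq 1$ and the claim is vacuous. Suppose from now on that $\alpha x^2>1$. Indices with $n_i=0$ do not contribute: applying the hypothesis at scale $y\to\infty$ forces $\p{\bX_i\geq 0}=0$ when $n_i=0$ and $\alpha>0$. For each $i$ with $n_i>0$, writing $x\sqrt{n}=(x\sqrt{n/n_i})\sqrt{n_i}$ and applying the hypothesis with the parameter $x\sqrt{n/n_i}\geq x$ yields
\[
\p{\bX_i\geq x\sqrt{n}}\leq \exp\!\left(-\alpha x^2\frac{n}{n_i}+\beta\right).
\]

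Setting $p_i=n_i/n\in(0,1]$ and $u=\alpha x^2>1$, the union bound gives
\[
\p{\max(\bX_1,\ldots,\bX_k)\geq x\sqrt{n}} \leq e^\beta\sum_{i:n_i>0} e^{-u/p_i}.
\]
The key pointwise inequality is that for all $u\geq 1$ and $p\in(0,1]$, one has $e^{-u/p}\leq p\,e^{-u}$. This is equivalent to $u(1/p-1)\geq \log(1/p)$, which in the variable $q=1/p\geq 1$ reads $u(q-1)\geq \log q$ and follows from $\log q\leq q-1$ together with $u\geq 1$. Summing over $i$ and using $\sum_i p_i\leq 1$ yields $\sum_i e^{-u/p_i}\leq e^{-u}$, which substituted into the display above gives exactly $\exp(-\alpha x^2+\beta)$, as required.

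The proof is essentially routine; I do not anticipate any real obstacle. The only place one needs to take care is to recognize that the pointwise bound $e^{-u/p}\leq p\,e^{-u}$ requires $u\geq 1$, which forces the case split above. The role of the hypothesis $\beta\geq 1$ is precisely to ensure that the complementary small-$x$ regime ($\alpha x^2<1$) is covered by the trivial bound.
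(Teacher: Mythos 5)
Your proof is correct and follows essentially the same route as the paper: union bound to reduce to bounding $\sum_i \exp(-\alpha x^2 n/n_i)$, case-split on $\alpha x^2 \gtrless 1$ using $\beta\ge 1$ to discharge the small-$x$ regime, then an exponential-sum estimate. The one place you go beyond the paper is that the paper asserts the key inequality $\sum_i e^{-a/a_i}\le e^{-a/(a_1+\cdots+a_k)}$ (for $\sum a_i\le a$) as an unproved ``calculus exercise,'' whereas you supply a clean proof via the termwise bound $e^{-u/p}\le p\,e^{-u}$ for $u\ge 1$, $p\in(0,1]$ (equivalently $u(q-1)\ge\log q$ with $q=1/p$), together with $\sum_i p_i\le 1$; you also explicitly dispose of the degenerate cases $\alpha\le 0$ and $n_i=0$, which the paper glosses over.
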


\begin{proof}
    By a union bound,
    \vspace{-0.1cm}\begin{align*}
        \p{\max(\bX_1,\dots,\bX_k)\geq x\sqrt{n}} \leq \sum_{i=1}^k\p{\bX_i\geq \left(x\sqrt{n/n_i}\right)\sqrt{n_i}}
        \leq e^\beta\sum_{i=1}^k\exp(-\frac{\alpha x^2n}{n_i}).
    \end{align*}
    It is a calculus exercise to show that for any real numbers $a>0$ and $a_1,\dots,a_k\geq 0$ with $a_1+\cdots+a_k\leq a$,
    \vspace{-0.1cm}\begin{equation}\label{eq:sum_of_exp}
        \sum_{i=1}^k\exp\left(-\frac{a}{a_i}\right) \leq \exp\left(-\frac{a}{a_1+\cdots+a_k}\right).
    \end{equation}
    We may assume that $\alpha x^2\geq 1$ because $\beta\geq1$, so otherwise the claimed bound is greater than 1. Now apply (\ref{eq:sum_of_exp}) with $a=\alpha x^2n\geq n_1+\cdots+n_k$ and $(a_1,\dots,a_k)=(n_1,\dots,n_k)$.
\end{proof}

\vspace{-0.3cm}
\bibliographystyle{plain}
\bibliography{biblio}
\end{document}